\documentclass[11pt, oneside]{amsart}

\usepackage{amsmath,amsfonts,amssymb,amsthm,mathrsfs}
\usepackage{xcolor}
\usepackage{hyperref}

\hypersetup{
  colorlinks=true,
  linkcolor=red,
  citecolor=blue,
  urlcolor=black
}

\newenvironment{acknowledgements}
  {\par\medskip\noindent\textbf{Acknowledgements.}\itshape}
  {\par\medskip}

\theoremstyle{plain}
\newtheorem{thm}{Theorem}[section]
\newtheorem{lem}[thm]{Lemma}

\newtheorem{prop}[thm]{Proposition}

\theoremstyle{definition}
\newtheorem{definition}[thm]{Definition}

\newtheorem{rem}[thm]{Remark}

\newcommand{\ind}{\mathbf 1}

\newcommand{\D}{\textnormal{D}}
\newcommand{\R}{\mathbb{R}}

\renewcommand*\D{\mathop{}\!\mathrm{D}}

\newcommand{\<}{\langle}
\renewcommand{\>}{\rangle}

\newcommand{\diam}{\textnormal{diam}}

\newcommand{\clball}{\bar{\mathcal{B}}}

\renewcommand{\H}{\mathcal{H}}

\newcommand{\hidethis}[1]{}

\newcommand{\vol}{\textnormal{vol}}

\newcommand{\ric}{\operatorname{Ric}}

\newcommand{\met}{\mathrm g}
\newcommand{\man}{M}
\newcommand{\Tr}{\operatorname{Tr}}

\newcommand{\Id}{\operatorname{Id}}

\newcommand{\PR}{\mathcal P^{\textnormal{ac}}_c(\man)}

\newcommand{\Sec}{\operatorname{sec}}

\newcommand{\ET}{ \mathbf H}

\newcommand{\curv}{ \mathrm{curv}}

\newcommand{\Rs}{\mathbf R}

\newcommand{\hess}{\operatorname{Hess}}
\newcommand{\der}{\operatorname{D}}

\newcommand{\dd}{n}

\begin{document}
\title[]{Alexandrov spaces with non negative curvature and displacement convexity of the entropy tensor}

\author{Jordan Serres}
\thanks{Sorbonne Université, LPSM, 4 place Jussieu 75005 Paris, France\\ ({\sf jordan.serres@sorbonne-universite.fr})}

\begin{abstract}
On a smooth Riemannian manifold, Aishwarya, Rotem and Shenfeld characterised
nonnegative sectional curvature as the matrix displacement convexity of an entropy
tensor, the Lagrangian, matrix-valued refinement of Shenfeld's entropy matrix. In order to extend the entropy tensor to a finite-dimensional Alexandrov space of curvature bounded below, we construct a parallel trivialisation satisfying both the cocycle property and the second variation formula. The construction is strongly inspired by Petrunin's synthetic parallel transport. The entropy tensor defined is taken in block-diagonal form; 
on smooth manifolds the resulting convexity property still characterises nonnegative sectional curvature exactly.
We show that the smooth equivalence persists
synthetically: an Alexandrov space has nonnegative curvature if and only if its entropy tensor is matrix displacement convex. 
\end{abstract}

\maketitle

\section*{Introduction}

Aishwarya, Rotem and Shenfeld \cite{aishwarya2025sectional} proved that a smooth complete
Riemannian manifold $(M,\met)$ has nonnegative \emph{sectional} curvature if and only if its
\emph{entropy tensor} is matrix displacement convex along $\mathscr W_2$-geodesics. The
entropy tensor $\ET_t^{\mu_0\to\mu_1}(x)=-\int_0^t\hess_{\gamma(s)}\theta_s\,ds$ is the
Lagrangian, matrix-valued refinement of the entropy matrix of Shenfeld \cite{Shenfeld24};
its trace, integrated against the source measure, returns the Boltzmann entropy, so that
matrix displacement convexity upgrades the classical Lott--Villani--Sturm \cite{LottVillani09, Sturm06(i)} equivalence ``$\ric\ge0\iff$ scalar
entropy displacement convex'' from the trace to the full tensor, and thereby from Ricci to
sectional curvature. 

Since the Alexandrov condition $\curv\ge0$ is a synthetic lower \emph{sectional} bound, it is natural to ask whether the matrix equivalence of
\cite{aishwarya2025sectional} survives the loss of smoothness. 
We construct first, inspired by Petrunin's parallel transportation construction \cite{petrunin1998parallel}, a family of isometries between tangent spaces at regular points (see Proposition \ref{prop:cocycle}), satisfying both a \emph{cocycle property} and the second variation formula \eqref{SVperp}. 
Thanks to the cocycle property, we can define, on a finite-dimensional Alexandrov space, the entropy and the block-diagonal entropy tensors (see Definition~\ref{def:alexentropytensor}) and prove:

\medskip
\noindent\textbf{Theorem 1} (Theorem~\ref{thm:aleximpliesdisplacementconvex}, informally).
\emph{If an Alexandrov space has nonnegative curvature, then its block-diagonal entropy tensor is matrix displacement convex.}

\medskip
\noindent\textbf{Theorem 2} (Theorem~\ref{thm:displacementconveximpliesAlex}, informally).
\emph{Conversely, if the block-diagonal entropy tensor of an Alexandrov space (a priori of curvature bounded below by some $\kappa\in\R$) is matrix displacement convex, then the space has nonnegative curvature (\textit{i.e.}\ $\kappa$ can be taken to
be zero).}

\medskip
The forward direction follows Petrunin's argument \cite{petrunin2010alexandrov} that showed that a space with curvature bounded below in the Alexandrov sense satisfies the Lott--Villani--Sturm condition, but we never take traces, keeping all inequalities at the level of quadratic forms on the tangent
spaces. The key point in this proof is the second variation formula \eqref{SVperp}.

The converse does not use the extraction of the sectional curvature from Jacobi field expansion as in the smooth setting \cite{aishwarya2025sectional}; instead it focuses a transport
onto a shrinking ball, runs a backward matrix Riccati comparison, and produces the
squared-distance comparison that characterises $\curv\ge0$ via Hu--Su--Wang \cite{HuSuWang}. In particular, translated back to smooth Riemannian geometry, the Alexandrov strategy would therefore constitute an independent alternative proof of the converse implication in Aishwarya--Rotem--Shenfeld.

\begin{acknowledgements}
The author thanks Gautam Aishwarya, Rotem Assouline, Alexandros Eskenazis, Liran Rotem, and Yair Shenfeld for insightful discussions and help with the project.
The author is also grateful to Anton Petrunin for pointing out the reference \cite{petrunin1998parallel}.

The LLM Claude was used as an aid for exploring technical difficulties and conceptual ideas throughout the project. The human author remains solely responsible for the content.
\end{acknowledgements}

\section{Preliminaries and definition of the entropy tensor}\label{sec:prelim}

\subsection{Preliminaries on Alexandrov geometry}\label{ss:prelimalex}

We collect here, without proofs, the notions and results from Alexandrov geometry
that are used in the sequel. Standard references for the general theory are
\cite{BuragoBuragoIvanov01,BuragoGromovPerelman92,AlexanderKapovitchPetrunin}.
Throughout, $(X,d)$ denotes a complete, locally compact, finite-dimensional
geodesic space.

\medskip
\noindent\textbf{Spaces with curvature bounded below.}
For $\kappa\in\R$ let $M_\kappa^2$ be the model surface of constant curvature $\kappa$.
Given three points $p,q,r\in X$ with $d(p,q)+d(q,r)+d(r,p)<2\pi/\sqrt{\kappa}$
(no restriction if $\kappa\le0$), a \emph{comparison triangle}
$\triangle\tilde p\tilde q\tilde r\subset M_\kappa^2$ is a triangle with the same side
lengths. The space $(X,d)$ has \emph{curvature bounded below by $\kappa$}
($\curv\ge\kappa$, or $\mathrm{CBB}(\kappa)$) if it is geodesic and every point has a
neighbourhood $U$ such that, for every geodesic triangle $\triangle pqr\subset U$ and
every point $s$ on the side $[qr]$, one has
\[
d(p,s)\ \ge\ d(\tilde p,\tilde s),
\]
where $\tilde s\in[\tilde q\tilde r]$ is the point with $d(\tilde q,\tilde s)=d(q,s)$.
Equivalently, comparison angles $\widetilde\angle pqr$ are monotone, or the Toponogov
hinge comparison holds. By the Globalization (Toponogov) Theorem, a local lower bound is
automatically global \cite{BuragoGromovPerelman92,AlexanderKapovitchPetrunin}. Typical
examples are complete Riemannian manifolds with $\Sec\ge\kappa$, their Gromov--Hausdorff
limits, convex hypersurfaces, quotients $M/G$ of nonnegatively curved manifolds by
isometric group actions, and spherical/Euclidean cones over $\mathrm{CBB}(1)$ spaces.
We write $\dd=\dim X$ for the Hausdorff dimension, which for a $\mathrm{CBB}$ space is an
integer, and $\mu_H=\H^{\dd}$ for the $\dd$-dimensional Hausdorff measure.

\medskip
\noindent\textbf{Tangent cones and spaces of directions.}
For $p\in X$, the \emph{space of directions} $\Sigma_pX$ is the completion of the set of
geodesic directions at $p$ equipped with the angle metric; it is a compact
$\mathrm{CBB}(1)$ space of dimension $\dd-1$. The \emph{tangent cone} is the Euclidean
cone $T_pX:=C(\Sigma_pX)$, and it is canonically isometric to the pointed
Gromov--Hausdorff blow-up $\lim_{\lambda\to\infty}(\lambda X,p)$. The tangent cone is a
$\mathrm{CBB}(0)$ space, and the exponential map $\exp_p:T_pX\to X$ is defined by following
geodesics; it is the natural domain for the first- and second-order calculus below.

\medskip
\noindent\textbf{Regular and singular points.}
A point $p\in X$ is \emph{regular} if $T_pX$ is isometric to the Euclidean space
$\R^{\dd}$, equivalently if $\Sigma_pX$ is isometric to the unit sphere
$\mathbb S^{\dd-1}$; otherwise $p$ is \emph{singular}. Let
$S_X=\{p\in X:T_pX\not\cong\R^{\dd}\}$ be the singular set. Then $S_X$ has
$\mu_H$-measure zero; indeed $S_X$ has Hausdorff dimension $<\dd-1$
\cite[Thm.~A]{OtsuShioya94}, and in fact, for interior (non-boundary) singular points,
Hausdorff dimension at most $\dd-2$, i.e.\ codimension at least two
\cite{BuragoGromovPerelman92}. The regular set
$X\setminus S_X$ is thus of full $\mu_H$-measure and carries, by Otsu--Shioya, a
$C^0$-Riemannian metric $\met$ inducing $d$ \cite{OtsuShioya94,otsu30differential}. This is the structure underlying
all the matrix-valued objects below.

\medskip
\noindent\textbf{Semiconcave and DC functions.}
A locally Lipschitz function $f:X\to\R$ is \emph{$\lambda$-concave} if its restriction to
every unit-speed geodesic $\gamma$ satisfies $(f\circ\gamma)''\le\lambda$ in the
distributional (support) sense, and \emph{semiconcave} if it is $\lambda$-concave locally
for some $\lambda$. The squared distance $\tfrac12 d(\cdot,q)^2$ is locally semiconcave,
and so is any Kantorovich potential for the cost $\tfrac12 d^2$. The natural regularity
class for second-order calculus is that of \emph{DC functions} (differences of
semiconcave functions): on the regular part the DC functions form an atlas, with respect
to which $\met$ has locally $BV$ coefficients, and every semiconcave function is twice
differentiable $\mu_H$-almost everywhere \cite{Perelman_DC,otsu30differential}.

\medskip
\noindent\textbf{The Hessian of a semiconcave function.}
Let $f$ be semiconcave. By Alexandrov's theorem in the DC charts
\cite{Perelman_DC,otsu30differential}, there is a set $\operatorname{Reg}f\subset
X\setminus S_X$ of full $\mu_H$-measure such that, for every $p\in\operatorname{Reg}f$,
$f$ is differentiable at $p$ and admits a second-order Taylor expansion: there is a
symmetric bilinear form $\hess_pf$ on $T_pX\cong\R^{\dd}$ with
\begin{equation}\label{eq:hessdefprelim}
f(\exp_p w)=f(p)+d_pf(w)+\tfrac12\,\hess_pf(w,w)+o(|w|^2),\qquad w\in T_pX.
\end{equation}
The form $\hess_pf$ is defined only $\mu_H$-a.e., is symmetric, and is bounded above
(resp.\ below) wherever $f$ is $\lambda$-concave (resp.\ semiconvex). We stress that
$\hess_pf$ is a bilinear form on the \emph{individual} tangent space $T_pX$; comparing or
integrating such forms at different points requires the parallel transport recalled below.

\medskip
\noindent\textbf{Gradient curves and the Hamilton--Jacobi flow.}
For a semiconcave $f$ and $p\in X$ the gradient $\nabla_pf\in T_pX$ is defined as the
element dual to the differential of $f$ in the sense of \cite{petrunin2010alexandrov,
PerelmanPetruninQG}; the \emph{gradient curve} $\dot\alpha=\nabla f(\alpha)$ exists,
is unique and is $1$-Lipschitz in the appropriate sense. For a $\tfrac12 d^2$-Kantorovich
potential $\theta$, its \emph{Hamilton--Jacobi shifts}
\begin{equation}\label{def:theta}
\theta_t(x):=\inf_{y\in X}\Big\{\theta(y)+\frac{1}{2t}\,d(x,y)^2\Big\}
\end{equation}
form, for $t>0$, a family of $\tfrac1t$-concave functions solving the Hamilton--Jacobi
equation $\partial_t\theta_t+\tfrac12|\nabla\theta_t|^2=0$, and satisfying the semigroup
identity $\theta_{t_1}=H_{t_1-t_0}\theta_{t_0}$ for $0<t_0<t_1$
\cite[\S2]{petrunin2010alexandrov}. The optimal transport rays
$\gamma(s)=\exp_x(-s\nabla\theta(x))$ are minimizing geodesics and coincide with the
characteristics of \eqref{def:theta}; along them $\dot\gamma(s)=-\nabla\theta_s(\gamma(s))$
\cite[\S1]{petrunin2010alexandrov}.

\medskip
\noindent\textbf{Parallel transport and the second variation formula.}
In a $\mathrm{CBB}$ space, Petrunin \cite{petrunin1998parallel} constructed, for each fixed pair of interior
points $p,q$ of $\gamma$, a \emph{synthetic} parallel transport: a linear isometry
$T_pX\to T_qX$ arising as a limit of compositions of comparison configurations, which
satisfies a synthetic second variation formula \cite[Thm.~1.1.B]{petrunin1998parallel};
in the nonnegatively curved case the latter bounds the squared distance between
exponential variations issued at $p$ and $q$ from above by its Euclidean model value (we
use it in the ultrafilter form of \cite[\S1]{petrunin2010alexandrov}, see Section~\ref{sec:forward}). The construction is pairwise: it depends on auxiliary limits, uniqueness fails ``in any good sense''
\cite[Introduction]{petrunin1998parallel}, hence no compatibility (cocycle property)
between the transports attached to different pairs is provided by \cite{petrunin1998parallel}.

In order to be well-defined, the matrix-valued objects of this paper require a transport satisfying the cocycle property. Such a trivialisation is provided by Lemma \ref{lem:existadmissible} through a coboundary frame that defines the entropy tensor, but note that its convexity in such a frame is not a geometric statement. 
Moreover, for the proof of Theorem \ref{thm:aleximpliesdisplacementconvex}, we will also need the second variation formula.

To get a transport enjoying both properties, we \emph{construct}, in
Section~\ref{ss:selection}, a selection of Petrunin-type transports with an exact
cocycle on a dense set of times \emph{and} a second-variation bound in the directions
normal to the ray. The second-variation bound in mixed (tangential--normal) directions
remains open (Remark~\ref{rem:fullSV}); accordingly, the entropy tensor of
Definition~\ref{def:alexentropytensor} is taken in block-diagonal form with respect to
the canonical tangential--normal splitting along the ray, for which the normal bound,
together with an exact tangential computation, suffices for
Theorem~\ref{thm:aleximpliesdisplacementconvex}.

Note that the $C^0$-Riemannian connection of \cite{OtsuShioya94} has, in the DC charts of \cite{Perelman_DC}, Christoffel symbols of only Radon-measure regularity \cite{ambrosio2018dc}, so its parallel transport cannot serve as the geometric trivialisation underlying the second-variation formula. This is why it does not suffice for the forward direction (Theorem \ref{thm:aleximpliesdisplacementconvex}).

\subsection{Definition of the entropy tensor}\label{ss:entropytensor}\,\\

\noindent\textbf{The Riemannian prototype.}
Our object of study is the Alexandrov analogue of the \emph{entropy tensor} introduced by
Aishwarya, Rotem and Shenfeld \cite{aishwarya2025sectional} on Riemannian manifolds,
itself a Lagrangian, pointwise refinement of the entropy matrix of Shenfeld
\cite{Shenfeld24}. We recall the construction and, above all, the precise sense in which
it refines the scalar entropy, since that is what dictates the correct definition in the
singular setting.

Let $(M,\met)$ be a smooth complete Riemannian manifold, $\mathrm{vol}$ its volume measure,
and
\[
H(\mu):=\int_M\log\frac{d\mu}{d\mathrm{vol}}\,d\mu
\]
the Boltzmann entropy of $\mu\ll\mathrm{vol}$. Given $\mu_0,\mu_1\in\mathcal
P_c^{\mathrm{ac}}(M)$, the Brenier--McCann theorem yields a $\tfrac12 d^2$-concave
potential $\theta$ such that $t\mapsto\mu_t:=(\exp(t\nabla\theta))_\#\mu_0$ is the
$\mathscr W_2$-geodesic from $\mu_0$ to $\mu_1$. Along the transport geodesic issued from
$x$ one forms the Jacobi field matrix $\mathbf J_s(x)$ (in a parallel orthonormal frame,
with $\mathbf J_0=\Id$, $\dot{\mathbf J}_0=\hess_x\theta$), the matrix
$\mathbf U_s(x):=\dot{\mathbf J}_s(x)\mathbf J_s(x)^{-1}=\hess_{\exp_x(s\nabla\theta)}\theta_s$,
and the \emph{entropy tensor}
\begin{equation}\label{eq:ETprototype}
\ET_t^{\mu_0\to\mu_1}(x):=-\int_0^t\mathbf U_s(x)\,ds,
\end{equation}
a symmetric $\dd\times\dd$ matrix attached to each $x$
\cite[Def.~2.2]{aishwarya2025sectional}. The entropy tensor is said to be \emph{matrix
displacement convex} if, for all $\mu_0,\mu_1\in\mathcal
P_c^{\mathrm{ac}}(M)$,
\[
\ddot\ET_t^{\mu_0\to\mu_1}(x)\ \succeq\ \big(\dot\ET_t^{\mu_0\to\mu_1}(x)\big)^2
\]
in the Loewner order. The result that
motivates this entire paper is the equivalence
\begin{equation}\label{eq:ARS}
\Sec(M)\ge0\quad\Longleftrightarrow\quad\ET\ \text{is matrix displacement convex}
\end{equation}
\cite[Thm.~2.10]{aishwarya2025sectional}: nonnegative \emph{sectional} curvature is captured exactly by the matrix (untraced) convexity of $\ET$. This
should be contrasted with the classical theorem that $\ric\ge0$ is equivalent to
displacement convexity of the \emph{scalar} entropy $H$
\cite{Cordero-ErasquinMcCannSchmuckenschlager01,vonRenesseSturm05}.
\\

\noindent \textbf{The matrix point of view of Shenfeld.}
The matrix refinement originates in \cite{Shenfeld24}, where, for a density flow
$(\rho_t,\theta_t)$ solving a continuity/Hamilton--Jacobi system, the scalar entropy
production $S(t)=\partial_t E(t)$ of $E(t)=\int\rho_t\log\rho_t$ is lifted to the
\emph{entropy production matrix}
$\mathcal S(t)=\int\nabla\rho_t\otimes_S\nabla\theta_t\,dx$ and the \emph{entropy matrix}
$\mathcal E(t)=\int_0^t\mathcal S(s)\,ds$, so that $E(t)=E(0)+\Tr[\mathcal E(t)]$ and the
matrix differential inequality $\partial_t\mathcal S\succeq\mathcal S^2$ encodes
intrinsic-dimensional information invisible to its trace. The crucial structural point,
emphasised in \cite{Shenfeld24}, is that the Hessian of the Kantorovich potential
$\nabla^2\theta_t$ is precisely the matrix that, upon tracing, returns the scalar entropy
production via the Jacobi formula, while \emph{before} tracing it resolves the entropy
production direction by direction in space.\\

\noindent\textbf{Entropy tensor versus scalar entropy.}
The relationship between the entropy tensor and the usual scalar entropy is the point on
which the whole construction turns, and it is more delicate than ``take the trace''. The
exact statement is the change-of-variables identity \cite[Lem.~2.5]{aishwarya2025sectional}
\begin{equation}\label{eq:traceentropy}
H(\mu_t)\ =\ H(\mu_0)+\int_M\Tr\big[\ET_t^{\mu_0\to\mu_1}(x)\big]\,d\mu_0(x),
\end{equation}
which follows from the Jacobi formula
$\tfrac{d}{ds}\log\det\mathbf J_s=\Tr[\mathbf U_s]=-\tfrac{d}{ds}\Tr[\ET_s]$. Thus the
scalar entropy is recovered from the entropy tensor by \emph{two} operations: a trace over
directions in $T_xM$, \emph{and} an integration over $x$ against the source measure
$\mu_0$. The pointwise trace $\Tr[\ET_t(x)]=-\log\det\mathbf J_t(x)$
is merely the local logarithmic Jacobian of the transport map; only its $\mu_0$-average is
an entropy. Consequently the entropy tensor carries strictly more information than the
scalar entropy in two independent ways: it resolves the directional, off-trace structure of
$\hess\theta_t$ (this is what \eqref{eq:ARS} exploits, and what distinguishes sectional from
Ricci curvature), and it is local in $x$ rather than spatially averaged.

\emph{The Euclidean case.} On $\R^{\dd}$ all tangent spaces are canonically identified, the
transport map is $x\mapsto x+t\nabla\theta(x)$, and $\ET_t(x)$ is an honest symmetric matrix
field. Even here, recovering $H$ from $\ET$ is the trace \emph{and} the spatial average
\eqref{eq:traceentropy}: the distinction between the pointwise matrix and the globally
integrated entropy matrix $\mathcal E(t)$ of \cite{Shenfeld24}, for which $E(t)=E(0)+\Tr[\mathcal
E(t)]$ with the spatial integral already absorbed, is visible already in flat space.

\emph{The Riemannian case.} On $(M,\met)$ the matrix $\mathbf U_s(x)=\hess_{\gamma(s)}\theta_s$
is a bilinear form on the \emph{varying} tangent space $T_{\gamma(s)}M$. Forming the integral
\eqref{eq:ETprototype} as a single matrix therefore requires trivialising $TM$ along $\gamma$
by Levi--Civita parallel transport. The trace $\Tr[\mathbf U_s]$ is frame-independent, so the
scalar identity \eqref{eq:traceentropy} is unaffected by the choice of trivialisation; but the
matrix $\ET_t(x)$ itself, and a fortiori its Loewner order, depend on the connection. This is
why the convexity \eqref{eq:ARS} sees the full curvature operator (sectional curvature) and not
only its trace (Ricci): the parallel transport that is invisible to the trace is exactly the
datum that the matrix statement constrains.

\emph{The Alexandrov case.} Three further degradations occur. First, $\theta_t$ is only semiconcave, so
$\mathbf U_s=\hess_{\gamma(s)}\theta_s$ exists only for $\mu_H$-a.e.\ ray $\gamma$ and a.e.\
parameter $s$, as a bilinear form on $T_{\gamma(s)}X$. Second, that tangent space is Euclidean, so that ordering bilinear forms is meaningful, only at regular points, which however form a
set of full measure. Third, the parallel transport along $\gamma$ that is needed to assemble
\eqref{eq:ETprototype} into a single matrix, and to differentiate it covariantly, must
simultaneously carry a cocycle property and a second-variation bound; no transport in
the literature is known to carry both (Section~\ref{ss:prelimalex}), and a dedicated
construction --- a selection of Petrunin-type transports with an exact cocycle on a
dense set of times and a second-variation bound in the directions normal to the ray ---
is carried out in Section~\ref{ss:selection}. Since the mixed-direction bound remains
open for that construction (Remark~\ref{rem:fullSV}), the definition below records
only the tangential scalar and the normal block of $\hess\theta_s$, in block-diagonal
form with respect to the canonical splitting
$T_{\gamma(s)}X=\R\dot\gamma(s)\oplus\dot\gamma(s)^\perp$; the discarded mixed entries
never enter the trace. As in the Riemannian case the trace is intrinsic: the scalar
identity \eqref{eq:traceentropy} holds verbatim and is exactly the computation by which
Petrunin \cite[Prop.~2.2]{petrunin2010alexandrov} establishes displacement convexity of the
scalar entropy, using only $\Tr[\mathbf U_s]$ and requiring no parallel transport. \\

\noindent\textbf{The definition.}
We can now give the definition of the (block-diagonal) entropy tensor in Alexandrov spaces.

\begin{definition}\label{def:alexentropytensor}
Let $(X,d)$ be a finite-dimensional Alexandrov space with $\curv\ge\kappa$ for some
$\kappa>-\infty$, of dimension $\dd$, and let $\mu_H=\H^{\dd}$ be its Hausdorff measure. Let
$\mu_0,\mu_1\in\PR$ be compactly supported probability measures absolutely continuous with
respect to $\mu_H$, and let $\theta:X\to\R$ be the $\tfrac12 d^2$-Kantorovich potential of
\cite[Thm.~1.1]{bertrand2008existence} transporting $\mu_0$ to $\mu_1$. Denote by
$(\theta_t)_{t>0}$ its Hamilton--Jacobi shifts \eqref{def:theta}, and, for
$\mu_H$-a.e.\ starting point $x$, by $\gamma(s)=\exp_x(-s\nabla\theta(x))$ the transport ray.
For $\mu_H$-a.e.\ such $x$ and a.e.\ $s>0$ the point $\gamma(s)$ is regular and lies in
$\operatorname{Reg}\theta_s$, and we set
\[
U_s(x):=\hess_{\gamma(s)}\theta_s,
\]
a symmetric bilinear form on $T_{\gamma(s)}X\cong\R^{\dd}$, defined by the expansion
\eqref{eq:hessdefprelim}.

At every regular point $\gamma(s)$ of the ray the tangent space splits orthogonally as
\[
T_{\gamma(s)}X\ =\ \R\dot\gamma(s)\ \oplus\ L_{\gamma(s)},
\qquad L_{\gamma(s)}:=\dot\gamma(s)^{\perp},
\]
a splitting that is canonical (it depends only on the ray). Decompose $U_s(x)$
accordingly into the \emph{tangential scalar} and the \emph{normal block},
\[
c_s(x):=\frac{U_s(x)\big(\dot\gamma(s),\dot\gamma(s)\big)}{|\dot\gamma(s)|^2}\in\R,
\qquad
V_s(x):=U_s(x)\big|_{L_{\gamma(s)}\times L_{\gamma(s)}},
\]
discarding the mixed entries $U_s(x)(\dot\gamma(s),\cdot)|_{L_{\gamma(s)}}$.

Fix along $\mu_H$-a.e.\ ray $\gamma$ an \emph{admissible
parallel trivialisation}: a family of linear isometries
$\tau_{s\to t}:T_{\gamma(s)}X\to T_{\gamma(t)}X$, defined for $s,t$ in a dense set
$D_\gamma\subset(0,\infty)$ of parameters, satisfying the exact cocycle identity
$\tau_{r\to t}\circ\tau_{s\to r}=\tau_{s\to t}$ ($s<r<t$ in $D_\gamma$), preserving the
splitting (i.e.\ $\tau_{s\to t}\,\dot\gamma(s)=\dot\gamma(t)$, hence
$\tau_{s\to t}L_{\gamma(s)}=L_{\gamma(t)}$), and such that
the matrices of the normal blocks $V_s(x)$, $s\in D_\gamma$, in a common $\tau$-parallel
orthonormal frame of $L_{\gamma(\cdot)}$ admit a locally bounded measurable extension to
a.e.\ $s>0$, still denoted $V_s(x)$ (for the trivialisation constructed in
Section~\ref{ss:selection} this extension exists and is canonical, by monotonicity:
Lemma~\ref{lem:monextension}). The
\emph{entropy tensor} associated with these data is then the block-diagonal matrix
\[
\ET_t^{\mu_0\to\mu_1}(x)\ :=\ -\int_0^t
\begin{pmatrix} c_s(x) & 0\\[2pt] 0 & V_s(x)\end{pmatrix}\,ds,
\]
a symmetric $\dd\times\dd$ matrix defined for $\mu_H$-a.e.\ $x$ and every $t>0$. We say that
$\ET$ is \emph{matrix displacement convex} if, for all such $\mu_0,\mu_1$ and for $\mu_H$-a.e.\ $x$,
\begin{equation}\label{eq:MDCdef}
\ddot\ET_t^{\mu_0\to\mu_1}(x)\ \succeq\ \big(\dot\ET_t^{\mu_0\to\mu_1}(x)\big)^2
\end{equation}
in the Loewner order, where $\dot\ET_t=\tfrac{\D}{dt}\ET_t$ denotes the covariant derivative
along $\gamma$, understood distributionally in $t$. Since $\dot\ET_t$ is block-diagonal,
so is its square, and \eqref{eq:MDCdef} decouples into the scalar inequality
$\dot c_t\le -c_t^2$ and the normal-block inequality $\dot V_t\preceq -V_t^2$, both
understood distributionally in $t$.
\end{definition}

\begin{rem}\label{rem:perdirection}
The per-direction statement that $t\mapsto e^{-\langle w(t),\ET_t(x)w(t)\rangle}$ is concave for
every parallel field $w$ is, by \cite[Lem.~2.7]{aishwarya2025sectional}, equivalent to the scalar
inequality $\tfrac{\D^2}{dt^2}\langle w,\ET_t w\rangle\ge\big(\tfrac{\D}{dt}\langle
w,\ET_t w\rangle\big)^2$, i.e.\ to $\langle w,\ddot\ET_t w\rangle\ge\langle w,\dot\ET_t
w\rangle^2$. It is implied by \eqref{eq:MDCdef} but strictly weaker than it, since
$\langle w,(\dot\ET_t)^2 w\rangle=|\dot\ET_t w|^2\ge\langle w,\dot\ET_t w\rangle^2$ by
Cauchy--Schwarz; the gap is the Jensen gap discussed in \cite[Rem.~2.8]{aishwarya2025sectional}.
\end{rem}

\begin{rem}\label{rem:traceisscalar}
Taking the trace in Definition~\ref{def:alexentropytensor} and integrating against $\mu_0$
recovers the scalar entropy through \eqref{eq:traceentropy}; this is the only operation that is
insensitive to the choice of parallel transport, and it reduces the matrix construction to
Petrunin's scalar computation \cite[Prop.~2.2]{petrunin2010alexandrov}. Note that the
block-diagonal reduction is invisible to the trace:
$\Tr\big[\operatorname{diag}(c_s,V_s)\big]=c_s+\Tr V_s=\Tr U_s$, since the discarded
mixed entries are off-diagonal in any splitting-adapted frame. The content of the
matrix statement lies entirely in the off-trace, frame-dependent part of $\ET_t$.
\end{rem}

\begin{rem}\label{rem:framechoice}
As in \cite[Rem.~2.4]{aishwarya2025sectional}, the matrix $\ET_t^{\mu_0\to\mu_1}(x)$ depends on
the choice of measurable orthonormal frame trivialising $T_{\gamma(\cdot)}X$, but the convexity
property of Definition~\ref{def:alexentropytensor} does not: any two such frames differ by a
constant orthogonal matrix along $\gamma$ (parallelism is preserved), under which both
$\ddot\ET_t$ and $(\dot\ET_t)^2$ transform by conjugation, so the Loewner inequality is
frame-independent \emph{for a fixed trivialisation $\tau$}. It does, however, depend a
priori on the trivialisation itself: two cocycle trivialisations differ by a gauge
$t\mapsto O_t\in O(\dd)$ which need not be constant in $t$, and \eqref{eq:MDCdef} is not
invariant under such gauges. The trace (Remark~\ref{rem:traceisscalar}) and, more
generally, the ordered spectrum of $\dot\ET_t$ are gauge-invariant; the untraced
statement of Theorem~\ref{thm:aleximpliesdisplacementconvex} is asserted for the explicit
trivialisation constructed in Section~\ref{ss:selection}.
\end{rem}

\subsection{Block-diagonal versus full entropy tensor}\label{rem:blockvsfull}\,\\

In this section, we compare the block-diagonal entropy tensor of Definition \ref{def:alexentropytensor} to the full entropy tensor defined by the quantity
\[
-\int_0^t U_s(x)\,ds,
\]
which exactly correspond to Shenfeld's entropy matrix on smooth Riemannian manifolds.
\medskip\\
\noindent\textbf{On smooth manifolds.}
On a smooth Riemannian manifold both tensors are defined, and it is worth recording
precisely how Definition~\ref{def:alexentropytensor} relates to the original tensor
\eqref{eq:ETprototype} of \cite{aishwarya2025sectional}. Work along a fixed transport
ray $\gamma$, in a parallel orthonormal frame whose first vector is
$\dot\gamma/|\dot\gamma|$, and write the full Hessian in block form
\[
\mathbf U_s\ =\ \begin{pmatrix} c_s & b_s^{\!\top}\\ b_s & V_s\end{pmatrix},
\qquad b_s:=\hess_{\gamma(s)}\theta_s\big(\tfrac{\dot\gamma}{|\dot\gamma|},\cdot\big)\Big|_{L_{\gamma(s)}}.
\]

\emph{(1) As tensors, the two objects differ.} The mixed entries $b_s$ are generically
nonzero: $b_s=0$ for all $s$ exactly when $\nabla\theta_s$ is an eigenvector of
$\hess\theta_s$ along the ray. This does hold for radial (focusing) transports, whose
potentials are functions of the distance to a point, by the eikonal equation; it fails
for a generic potential. The traces, however, always agree
(Remark~\ref{rem:traceisscalar}), so the scalar identity \eqref{eq:traceentropy} is the
same for both tensors.

\emph{(2) For a fixed transport, the block-diagonal inequality is formally weaker.}
The smooth Riccati equation $\dot{\mathbf U}_s+\mathbf U_s^2+R_s=0$, with
$R_s=R(\cdot,\dot\gamma)\dot\gamma$ (so that $R_s$ is itself block-diagonal, with
vanishing tangential entry and normal block $R_\perp$), reads componentwise
\[
\dot c_s=-\big(c_s^2+|b_s|^2\big)\ \le\ -c_s^2,
\qquad
\dot V_s+V_s^2\ =\ -R_\perp-\,b_s\otimes b_s .
\]
Thus the tangential scalar inequality of \eqref{eq:MDCdef} holds \emph{automatically},
with slack $|b_s|^2$, while the normal-block inequality is equivalent to
$R_\perp+b_s\otimes b_s\succeq0$; the full inequality
$\dot{\mathbf U}\preceq-\mathbf U^2$ is equivalent to $R_\perp\succeq0$. For a single
transport, block-diagonal convexity is therefore implied by full convexity (concretely:
the pinching $A\mapsto\operatorname{diag}(c,V)$ is order-preserving and satisfies
$\operatorname{pinch}(\mathbf U^2)\succeq(\operatorname{pinch}\mathbf U)^2$), and is in
general strictly weaker, the deficit being $b_s\otimes b_s\succeq0$.

\emph{(3) Quantified over all transports, the two convexity properties coincide.} Each
is equivalent to $\Sec\ge0$: full matrix displacement convexity by
\cite[Thm.~2.10]{aishwarya2025sectional}, and block-diagonal matrix displacement
convexity because, on one hand, it is implied by the full one (point (2)), and, on the
other hand, it still forces $\Sec\ge0$, by a direct radial argument available in the
smooth setting: given a $2$-plane $\operatorname{span}(u,w)$ at $z$, take the unit-speed
geodesic $\gamma$ with $\dot\gamma(0)=u$, a point $p=\gamma(\ell)$ \emph{before} the
cut point of $z$ along $\gamma$, and a transport whose potential is (a multiple of)
$\tfrac12d^2(\cdot,p)$ near $z$; the potential and its Hamilton--Jacobi shifts are
smooth functions of the distance to $p$ there, so $b_s\equiv0$ \emph{exactly} along
the ray through $z$ (eikonal equation), the deficit in point (2) vanishes, and the
normal-block inequality returns $\<R_\perp w,w\>=\Sec(u,w)|w|^2\ge0$. In particular,
on smooth manifolds, block-diagonal and full matrix displacement convexity (each
quantified over all transports) are both equivalent to $\Sec\ge0$, the block-diagonal
characterisation having the formally weaker convexity hypothesis in the converse
direction. We caution that this direct radial argument is genuinely smooth: it uses
potentials supported near a single ray and the smoothness of $d^2(\cdot,p)$ before
the cut locus; for the Alexandrov analogue and its current status see
Remark~\ref{rem:blockconvimpliesfullconv}.

\medskip
\noindent\textbf{On Alexandrov spaces}
Remark~\ref{rem:blockvsfull} asserted, on a smooth manifold and for a single transport,
that block-diagonal matrix displacement convexity is implied by the full one, the
mechanism being that the block-diagonal compression is order-preserving and
superquadratic. We record this rigorously, in a form that also covers the synthetic
setting, since it makes precise the sense in which the block-diagonal tensor of
Definition~\ref{def:alexentropytensor} is a genuine relaxation of the full entropy tensor.

Fix the orthogonal splitting $\R^{\dd}=\R e_1\oplus E$, $E:=e_1^{\perp}$ (in the
applications $e_1=\dot\gamma/|\dot\gamma|$ and $E=L_{\gamma(\cdot)}$), and let
\[
P:\mathrm{Sym}(\dd)\to\mathrm{Sym}(\dd),\qquad
P\begin{pmatrix}A_{11}&A_{12}\\ A_{12}^{\top}&A_{22}\end{pmatrix}
=\begin{pmatrix}A_{11}&0\\ 0&A_{22}\end{pmatrix},
\]
be the pinching onto block-diagonal form, so that $P(U)=\operatorname{diag}(c,V)$ with
$c=U(e_1,e_1)$ and $V=U|_{E\times E}$ in the notation of
Definition~\ref{def:alexentropytensor}.

\begin{lem}\label{lem:pinching}
For every $A\in\mathrm{Sym}(\dd)$ one has $A\succeq0\Rightarrow P(A)\succeq0$, and
\[
P(A^{2})-P(A)^{2}=\begin{pmatrix}A_{12}A_{12}^{\top}&0\\ 0&A_{12}^{\top}A_{12}\end{pmatrix}\ \succeq\ 0 .
\]
In the splitting $\R e_1\oplus E$, writing $A_{12}=b^{\top}$ with $b\in E$, the deficit
is $P(A^{2})-P(A)^{2}=\operatorname{diag}\bigl(|b|^{2},\,b\otimes b\bigr)$.
\end{lem}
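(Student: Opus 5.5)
The plan is a direct block-matrix computation in the fixed splitting $\R^{\dd}=\R e_1\oplus E$. Write
\[
A=\begin{pmatrix}A_{11}&A_{12}\\ A_{12}^{\top}&A_{22}\end{pmatrix},
\]
with $A_{11}$ a scalar, $A_{12}$ a $1\times(\dd-1)$ row, and $A_{22}$ symmetric on $E$.

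For positivity, I use that $P(A)$ is block-diagonal, so $P(A)\succeq0$ if and only if $A_{11}\ge0$ and $A_{22}\succeq0$. Both are compressions of $A$. Indeed, $A_{11}=\<e_1,Ae_1\>\ge0$. Also, for $v\in E$ we have $\<v,A_{22}v\>=\<(0,v),A(0,v)\>\ge0$. As a cleaner alternative, one can note that $P(A)=\tfrac12(A+SAS)$ with $S=\operatorname{diag}(1,-\Id_E)$ orthogonal. Then $P(A)$ is an average of two positive semidefinite matrices, since $SAS$ is congruent to $A$.

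For the identity, I multiply out $A^2$ blockwise:
\[
A^2=\begin{pmatrix}A_{11}^2+A_{12}A_{12}^{\top} & A_{11}A_{12}+A_{12}A_{22}\\ \ast & A_{12}^{\top}A_{12}+A_{22}^2\end{pmatrix}.
\]
Pinching keeps the diagonal blocks, and $P(A)^2=\operatorname{diag}(A_{11}^2,A_{22}^2)$. Subtracting gives the stated formula.

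Both deficit blocks are Gram-type matrices. $A_{12}A_{12}^{\top}$ is a nonnegative scalar, and $A_{12}^{\top}A_{12}$ is positive semidefinite because $\<v,A_{12}^{\top}A_{12}v\>=|A_{12}v|^2$. Hence the deficit is $\succeq0$.

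For the last sentence, set $A_{12}=b^{\top}$ with $b\in E$. Then $A_{12}A_{12}^{\top}=b^{\top}b=|b|^2$ and $A_{12}^{\top}A_{12}=bb^{\top}=b\otimes b$.

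There is no real obstacle here. The only point needing care is the bookkeeping of which factor is the row and which is the column in the two deficit blocks.
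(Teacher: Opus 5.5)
Your proposal is correct and matches the paper's proof: positivity via the averaging $P(A)=\tfrac12(A+JAJ)$ with $J=\operatorname{diag}(1,-\Id_E)$ orthogonal, and the deficit identity by subtracting the diagonal blocks of $P(A)^2$ from those of $A^2$, both remaining blocks being Gram matrices. Your first positivity argument via the compressions $A_{11}$ and $A_{22}$ is an equally valid, slightly more direct alternative.
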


\begin{proof}
With $J:=\operatorname{diag}(1,-\Id_{E})\in O(\dd)$ one has $P(A)=\tfrac12(A+JAJ)$; if
$A\succeq0$ then $JAJ\succeq0$, hence $P(A)\succeq0$ (so $P$ is a unital positive linear
map). For the second identity, the diagonal blocks of $A^{2}$ are
$A_{11}^{2}+A_{12}A_{12}^{\top}$ and $A_{22}^{2}+A_{12}^{\top}A_{12}$, while those of
$P(A)^{2}$ are $A_{11}^{2}$ and $A_{22}^{2}$; subtracting gives the displayed matrix,
which is $\succeq0$ since $A_{12}A_{12}^{\top}$ and $A_{12}^{\top}A_{12}$ are Gram
matrices. In the splitting $A_{11}=c$ is a scalar and $A_{12}=b^{\top}$ a row, so
$A_{12}A_{12}^{\top}=|b|^{2}$ and $A_{12}^{\top}A_{12}=b\otimes b$.
\end{proof}

We can now show that the matrix displacement convexity property of the full entropy tensor implies the same property for the block-diagonal entropy tensor.

\begin{prop}\label{prop:fulltoblock}
Let $\gamma$ be a transport ray carrying an admissible parallel trivialisation
$\widehat W$ in the sense of Definition~\ref{def:alexentropytensor}, read in a
$\widehat W$-parallel splitting-adapted orthonormal frame, and suppose the full Hessian
matrix $U_t=\hess_{\gamma(t)}\theta_t$ is locally bounded (e.g.\ by the interior bound
\eqref{eq:twosidedhess}, which is fulfilled in CBB($\kappa$), $\kappa\in\R$) and of bounded variation in $t$ in this frame. If the full
entropy tensor $-\int_0^tU_s\,ds$ is matrix displacement convex along $\gamma$, i.e.
\[
\dot U_t\ \preceq\ -\,U_t^{2}\qquad\text{distributionally in }t,
\]
then the block-diagonal entropy tensor
$\ET_t=-\int_0^t\operatorname{diag}(c_s,V_s)\,ds$ is matrix displacement convex along $\gamma$:
$\dot{\hat U}_t\preceq-\hat U_t^{2}$ distributionally, with $\hat U_t=P(U_t)$.
\end{prop}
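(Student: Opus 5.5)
The plan is to apply the pinching map $P$ of Lemma~\ref{lem:pinching} to the distributional inequality $\dot U_t\preceq -U_t^2$. Two facts make this work. First, $P$ is linear and does not depend on $t$. Second, because the frame is $\widehat W$-parallel and adapted to the splitting (the trivialisation carries $\dot\gamma$ to $\dot\gamma$ and $L$ to $L$), the first frame vector is $\dot\gamma/|\dot\gamma|$ at every $t$. Hence in this frame $P(U_t)=\operatorname{diag}(c_t,V_t)=\hat U_t$ entrywise, and $P$ acts on the constant matrix space $\mathrm{Sym}(\dd)$. Here the covariant derivative $\tfrac{\D}{dt}$ is just the ordinary distributional derivative of the matrix entries in this frame. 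Since $U_t$ is locally bounded and of bounded variation, $\dot U_t$ is a $\mathrm{Sym}(\dd)$-valued Radon measure. Linearity of $P$ then gives $\tfrac{d}{dt}P(U_t)=P(\dot U_t)$ as measures, because $P$ commutes with testing against any $\varphi\in C_c^\infty$.

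Concretely, we read the hypothesis as follows: for every $\varphi\ge0$ in $C_c^\infty((0,\infty))$ and every constant vector $\xi$,
\[
-\int \varphi'(t)\,\langle U_t\xi,\xi\rangle\,dt\ \le\ -\int\varphi(t)\,\langle U_t^2\xi,\xi\rangle\,dt .
\]
Equivalently, the matrix measure $\mu:=-\dot U_t-U_t^2\,dt$ is Loewner-nonnegative. By the first part of Lemma~\ref{lem:pinching}, $P$ is a positive linear map, so $P(\mu)\succeq0$ as a matrix measure. To see this, test against $\varphi\ge0$: the matrix $\int\varphi\,d\mu$ is $\succeq0$, hence so is its image under $P$, and $P$ commutes with the integral. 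This yields
\[
\dot{\hat U}_t\ \preceq\ -P(U_t^2)\qquad\text{distributionally.}
\]

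The second part of Lemma~\ref{lem:pinching} gives, pointwise for a.e.\ $t$, $P(U_t^2)=\hat U_t^2+\operatorname{diag}(|b_t|^2,b_t\otimes b_t)\succeq\hat U_t^2$. Since $\varphi\ge0$, we may integrate this pointwise inequality against $\varphi$ and conclude $\dot{\hat U}_t\preceq-\hat U_t^2$ distributionally. All integrands are locally bounded by hypothesis, so every integral is finite. By the decoupling noted after \eqref{eq:MDCdef}, this is exactly matrix displacement convexity of the block-diagonal $\ET_t$. As a byproduct, it shows that the scalar inequality holds with slack $|b_t|^2$ and the normal inequality with slack $b_t\otimes b_t$.

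The only delicate point is the identification of $\tfrac{\D}{dt}\hat U_t$ with $P(\dot U_t)$. This needs the splitting to be constant in the chosen frame, which follows from the admissibility requirement $\tau_{s\to t}\dot\gamma(s)=\dot\gamma(t)$ (after normalising, since $|\dot\gamma|$ is constant along the ray). It also needs the distributional derivative to be taken entrywise in a frame defined only on the dense set $D_\gamma$. I would handle the latter by noting that the bounded measurable extension from Definition~\ref{def:alexentropytensor} determines $U_t$ as an element of $L^\infty_{\mathrm{loc}}$, so distributional derivatives are unaffected by the choice of representative. Everything else is linear algebra supplied by Lemma~\ref{lem:pinching}.
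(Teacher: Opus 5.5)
Your proposal is correct and follows the paper's proof. Both arguments rest on three points: the splitting is constant in the adapted $\widehat W$-parallel frame, so $P$ commutes with $\tfrac{d}{dt}$; the positive unital map $P$ turns the hypothesis into $\dot{\hat U}_t\preceq -P(U_t^2)$; and $P(U_t^2)\succeq\hat U_t^2$ a.e.\ by Lemma~\ref{lem:pinching}. The only cosmetic difference is that the paper does the positivity step dually, writing $\langle\hat U_t\eta,\eta\rangle=\Tr\bigl(U_tP(\eta\eta^{\top})\bigr)$ and testing the hypothesis on the eigenvectors of $P(\eta\eta^{\top})\succeq0$, whereas you apply $P$ directly to the tested matrix $\int\varphi\,d\mu\succeq0$.
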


\begin{proof}
The frame being splitting-adapted, its first vector is $\dot\gamma/|\dot\gamma|$ at every
time, so the splitting $\R e_1\oplus E$ is constant in the frame and $P$ is the
\emph{same} constant projection of $\mathrm{Sym}(\dd)$ at all $t$. Consequently $P$
commutes with $\tfrac{d}{dt}$, $\hat U_t=P(U_t)$ is locally bounded $BV$, and
$P(\dot U_t)=\dot{\hat U}_t$ as distributions.

Fix $\eta\in\R^{\dd}$. Since $P$ is self-adjoint for the trace inner product,
$\langle\hat U_t\eta,\eta\rangle=\Tr\bigl(U_t\,P(\eta\eta^{\top})\bigr)$, and
$P(\eta\eta^{\top})\succeq0$ by Lemma~\ref{lem:pinching}; write its spectral
decomposition $P(\eta\eta^{\top})=\sum_k\mu_k\,\zeta_k\zeta_k^{\top}$ with $\mu_k\ge0$.
Then, as distributions in $t$,
\begin{align*}
\tfrac{d}{dt}\langle\hat U_t\eta,\eta\rangle
&=\Tr\bigl(\dot U_t\,P(\eta\eta^{\top})\bigr)\\
&=\sum_k\mu_k\,\tfrac{d}{dt}\langle U_t\zeta_k,\zeta_k\rangle\\
& \le\ -\sum_k\mu_k\,|U_t\zeta_k|^{2}\\
&=-\,\langle P(U_t^{2})\eta,\eta\rangle ,
\end{align*}
the inequality being the full-convexity hypothesis $\dot U_t\preceq-U_t^2$ tested on each
$\zeta_k$ and combined with the nonnegative weights $\mu_k$. At a.e.\ $t$ the point
$\gamma(t)$ is regular, so $U_t$ is a genuine symmetric matrix and
Lemma~\ref{lem:pinching} gives $P(U_t^{2})\succeq P(U_t)^{2}=\hat U_t^{2}$ pointwise; the
exceptional times are Lebesgue-null. Both $\langle P(U_t^2)\eta,\eta\rangle$ and
$\langle\hat U_t^{2}\eta,\eta\rangle$ being locally integrable ($U_t$ bounded), this a.e.\
inequality is also distributional, and chaining gives
$\tfrac{d}{dt}\langle\hat U_t\eta,\eta\rangle\le-\langle\hat U_t^{2}\eta,\eta\rangle$
distributionally, for every $\eta$, i.e.\ $\dot{\hat U}_t\preceq-\hat U_t^{2}$.
\end{proof}

\begin{rem}[The deficit and its curvature meaning]\label{rem:pinchdeficit}
On a smooth manifold $U$ solves the Riccati equation $\dot U_s=-U_s^{2}-R_s$ with
$R_s=R(\,\cdot\,,\dot\gamma)\dot\gamma=\operatorname{diag}(0,R_\perp)$ block-diagonal
(since $R(\dot\gamma,\dot\gamma)\dot\gamma=0$). Applying $P$ and
Lemma~\ref{lem:pinching} gives the exact identity
\[
\dot{\hat U}_s=-\hat U_s^{2}-\operatorname{diag}\bigl(|b_s|^{2},\,R_\perp+b_s\otimes b_s\bigr),
\qquad
b_s:=\hess_{\gamma(s)}\theta_s\bigl(\tfrac{\dot\gamma}{|\dot\gamma|},\cdot\bigr)\Big|_{E},
\]
so that full convexity $\Leftrightarrow R_\perp\succeq0$ while block-diagonal convexity
$\Leftrightarrow R_\perp+b_s\otimes b_s\succeq0$; since $b_s\otimes b_s\succeq0$ the
former implies the latter, the slack being $b_s\otimes b_s$ in the normal block and
$|b_s|^{2}$ in the tangential scalar. This reproves the smooth case and identifies the deficit. 
\end{rem}

\begin{rem}\label{rem:blockconvimpliesfullconv}
The converse implication ``block-diagonal convexity \emph{recovers} the full one''
holds on smooth manifolds, but only when both are quantified over all transports,
and the proof is forced through the fact that they are both equivalent to the nonnegative sectional curvature. In the Alexandrov case, this converse implication is equivalent to Property~(SV), which we leave as an open problem, see Remark~\ref{rem:fullSV}.
Note that this converse implication fails for a single transport, regardless of the regularity of the space.
\end{rem}

\section{Alexandrov spaces are matrix displacement convex}\label{sec:forward}

The goal of this section is to prove the forward direction of \eqref{eq:ARS} in the Alexandrov synthetic setting.

\begin{thm}\label{thm:aleximpliesdisplacementconvex}
Let $(X,d)$ be an Alexandrov space with non-negative curvature, and let $\mu_0$ and
$\mu_1$ be compactly supported probability measures absolutely continuous with respect
to the Hausdorff measure. Then, along
$\mu_H$-a.e.\ transport ray, the trivialisation constructed in
Section~\ref{ss:selection} is admissible in the sense of
Definition~\ref{def:alexentropytensor}, and the associated block-diagonal entropy tensor
$\ET_t^{\mu_0\to\mu_1}$ is matrix displacement convex.
\end{thm}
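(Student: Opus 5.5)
The plan is to follow Petrunin's proof that $\mathrm{CBB}(0)$ implies the Lott--Villani--Sturm condition \cite{petrunin2010alexandrov}, but to stay at the level of quadratic forms throughout and split along $\R\dot\gamma\oplus L_\gamma$. First, admissibility. For $\mu_H$-a.e.\ $x$ the ray $\gamma$ has a.e.\ time regular and in $\operatorname{Reg}\theta_s$ (Fubini together with the fact that $(\exp(-s\nabla\theta))_\#\mu_0\ll\mu_H$). On such rays we invoke the selection of Section~\ref{ss:selection} (Proposition~\ref{prop:cocycle}). It gives isometries $\tau_{s\to t}$ on a dense set $D_\gamma$ with exact cocycle, and these map $\dot\gamma(s)$ to $\dot\gamma(t)$ because Petrunin's transport sends the ray direction to itself. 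The measurable extension of $V_s$ comes from Lemma~\ref{lem:monextension}, provided we first prove the monotonicity of Step 3 on $D_\gamma$. The two-sided bound \eqref{eq:twosidedhess} gives local boundedness of $U_s$ at interior times, since $\theta_s$ is $\tfrac1s$-concave and semiconvex by the semigroup identity.

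\emph{Tangential scalar.} Along the ray, $\theta_s(\gamma(s))$ is explicit because $|\nabla\theta_s|=|\dot\gamma|$ is constant. Differentiating the Hamilton--Jacobi equation twice in the direction $\dot\gamma$ gives $\dot c_s=-c_s^2-|b_s|^2\le -c_s^2$ heuristically. Synthetically, I would get it from a one-dimensional comparison. The function $s\mapsto\theta_s$ along the geodesic extension of $\gamma$ at each time satisfies a Hopf--Lax identity, and $c_s=1/(s-s_*)$-type bounds follow from $\theta_{t}=H_{t-s}\theta_s$ restricted to the line. This gives $\frac{d}{ds}(1/c_s)\ge1$ wherever $c_s\ne0$, which is equivalent to $\dot c\le-c^2$ distributionally. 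This step uses no curvature.

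\emph{Normal block, the main step.} Fix $s<t$ in $D_\gamma$, $w\in L_{\gamma(s)}$ and $w'=\tau_{s\to t}w$. By the semigroup identity,
\[
\theta_t(\exp_{\gamma(t)}\epsilon w')\le\theta_s(\exp_{\gamma(s)}\epsilon\lambda w)+\frac{d(\exp_{\gamma(s)}\epsilon\lambda w,\exp_{\gamma(t)}\epsilon w')^2}{2(t-s)}
\]
for any $\lambda\in\R$. The normal second variation formula \eqref{SVperp} with $\curv\ge0$ bounds the squared distance by $|\dot\gamma|^2(t-s)^2+\epsilon^2(\lambda-1)^2|w|^2+o(\epsilon^2)$; crucially there is no curvature gain term, only the Euclidean model. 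Expanding both $\theta$'s via \eqref{eq:hessdefprelim}, the first-order terms cancel because $w\perp\dot\gamma$. This yields
\[
\langle V_tw',w'\rangle\le\lambda^2\langle V_sw,w\rangle+\frac{(\lambda-1)^2|w|^2}{t-s}.
\]
Optimising over $\lambda$ is the matrix-valued Hopf--Lax/Riccati comparison. It gives, in the $\tau$-parallel frame, $V_t\preceq V_s(\Id+(t-s)V_s)^{-1}$ whenever $\Id+(t-s)V_s\succ0$, and this positivity holds since $\theta_t$ is $\tfrac1{t-s}$-type concave relative to $\theta_s$. Equivalently $V_t^{-1}\succeq V_s^{-1}+(t-s)\Id$ on invertibility, or in a cleaner form that the map $s\mapsto V_s$ is dominated by the flat Riccati flow. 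This is the monotonicity that feeds Lemma~\ref{lem:monextension}. Letting $t\downarrow s$ along $D_\gamma$ and using the cocycle to concatenate, we get $\limsup_{h\to0}\frac{V_{s+h}-V_s}{h}\preceq-V_s^2$ on $D_\gamma$. The semigroup comparison inequality integrates to the distributional statement $\dot V\preceq-V^2$, because the comparison $V_t\preceq\Phi_{t-s}(V_s)$ with the flat Riccati flow $\Phi$ is itself a viscosity/integral form of the inequality; this is checked by testing with nonnegative test functions and mollification in the fixed frame.

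The main obstacle I expect is handling the mixed term. The vector $w$ is normal, but the optimal competitor for $\theta_s$ could want a tangential component $\lambda w+\mu\dot\gamma$. Restricting to normal competitors, which is exactly where \eqref{SVperp} is available, only produces an upper bound of $V_t$ by the normal block alone. That is fine because we are only after an inequality, but one must verify that the discarded mixed entries $b_s$ do not enter with the wrong sign. In the smooth picture they enter as $-b\otimes b$ (Remark~\ref{rem:pinchdeficit}), which is favourable, and restricting the infimum to normal competitors reproduces precisely this favourable sign.
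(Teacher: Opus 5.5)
Your architecture matches the paper's: Hopf--Lax semigroup, an exact computation on the ray for the tangential scalar, $(\mathrm{SV}_\perp)$ for the normal block, an inf-convolution giving $\hat U_{t_1}\preceq\hat U_{t_0}(\Id+\Delta\hat U_{t_0})^{-1}$, monotone extension by Lemma~\ref{lem:monextension}, then integration. However, the normal-block step as written proves only the per-direction inequality, not the matrix one.

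You take the competitor at time $s$ to be $\lambda w$, with $\lambda\in\R$ a scalar and $w=\tau_{s\to t}^{-1}w'$. For $|w|=1$, $a:=\langle V_sw,w\rangle$ and $\Delta:=t-s$, minimising $\lambda^2a+(\lambda-1)^2/\Delta$ gives $\lambda=(1+\Delta a)^{-1}$ and
\[
\langle V_tw',w'\rangle\ \le\ f\bigl(\langle V_sw,w\rangle\bigr),\qquad f(x):=\frac{x}{1+\Delta x}.
\]
The matrix bound $V_t\preceq V_s(\Id+\Delta V_s)^{-1}$ instead says $\langle V_tw',w'\rangle\le\langle f(V_s)w,w\rangle$. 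Since $f$ is strictly concave on $\{1+\Delta x>0\}$, Jensen gives $\langle f(V_s)w,w\rangle\le f(\langle V_sw,w\rangle)$, with strict inequality unless $w$ is an eigenvector of $V_s$. So your bound is strictly weaker. Expanded in $\Delta$, it yields only $\tfrac{d}{dt}\langle V_tw,w\rangle\le-\langle V_tw,w\rangle^2$ rather than $-|V_tw|^2$. That is exactly the per-direction statement of Remark~\ref{rem:perdirection}, which is separated from matrix displacement convexity by the Cauchy--Schwarz (Jensen) gap, and that gap is the whole content of the theorem. Your assertion that ``optimising over $\lambda$ is the matrix-valued Hopf--Lax comparison'' is therefore false. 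It is harmless only in the tangential block, which is one-dimensional.

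The repair is to let the competitor range over all of $L_{\gamma(s)}$. $(\mathrm{SV}_\perp)$ allows this, since it holds for every fixed pair $v\in L_{\gamma(s)}$, $u\in L_{\gamma(t)}$. This gives \eqref{eq:Hessiancompar}:
\[
\langle V_tu,u\rangle\le\langle V_sv,v\rangle+\frac{|u-\tau v|^2}{\Delta}\quad\text{for all such } u,v.
\]
In the parallel frame, the infimum over $v$ is attained at $v=(\Id+\Delta V_s)^{-1}u$, with value $\langle V_s(\Id+\Delta V_s)^{-1}u,u\rangle$, which is the matrix bound. This requires $\Id+\Delta V_s\succ0$ as a matrix. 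The paper reads this off the same inequality: boundedness below in $v$ forces $\Id+\Delta V_s\succeq0$, and strictness follows by applying it with a later grid time.

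Your worry about mixed entries is then moot. Restricting to normal competitors is exactly what the paper does and costs nothing; what must not be restricted is the direction of the competitor inside the normal space.

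A smaller point: $(\mathrm{SV}_\perp)$ is available only along $\varepsilon_j=1/j$ for $\omega$-almost all $j$, so the coefficient comparison must be done along that set, as in the paper's barrier argument. Since the linear terms vanish, this is enough.
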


\subsection{An admissible Petrunin-type parallel trivialisation}\label{ss:selection}

The proof of Theorem \ref{thm:aleximpliesdisplacementconvex} requires, along $\mu_H$-a.e.\ transport ray, a single family of isometries
between the tangent spaces enjoying simultaneously (a)~an exact cocycle property, which
makes the matrix-valued quantities of Definition~\ref{def:alexentropytensor} well defined
across all times, and (b)~Petrunin's second-variation bound, which drives the Hessian
comparison of Step~3 of Section \ref{sec:proofforward} below. In all this section we consider $(X,d)$ an Alexandrov space with non-negative curvature, and we fix a transport ray $\gamma:[0,a]\to X$ from the full-measure family of Step~0 below (see Section \ref{sec:proofforward}) and a compact subinterval $[\alpha,\beta]\subset(0,a)$, and construct a family of Petrunin-type transports between the normal cones, on a (shifted) dyadic set of times, enjoying \emph{both} properties: an exact cocycle, and the second-variation bound $(\mathrm{SV}_\perp)$ in the directions normal to the ray (Proposition~\ref{prop:cocycle}). The construction proceeds in two stages. First, a forward family $\tau$ of Petrunin-type transports with an exact cocycle is extracted along a fixed ultrafilter (Lemma~\ref{lem:taucocycle}); transcribing Petrunin's second-variation argument to this setting proves $(\mathrm{SV}_\perp)$, per pair, for the \emph{midpoint-factorised} variants $T_{s,t}$ built from $\tau$ and the reverse family $\sigma$ (Proposition~\ref{prop:SVmidpoint}). Second, the trivialisation $W$ is defined as the limit of compositions of elementary midpoint-factorised blocks: the cocycle then holds by associativity at every level, and $(\mathrm{SV}_\perp)$ is inherited through the composition lemma (Lemma~\ref{lem:SVcompose}). 

\medskip
\noindent\emph{Conventions.} Fix once and for all a nonprincipal ultrafilter $\omega$ on $\mathbb N$ and set $\varepsilon_j:=1/j$ (any fixed scale sequence would do). All limits below are $\omega$-limits; since every quantity involved lies in a compact set (closed balls in proper tangent cones), every $\omega$-limit exists, and no subsequence extraction occurs anywhere --- this is what makes a single coherent selection possible. For an interior point $p=\gamma(t)$ we write $L_p\subset T_pX$ for the normal cone $\{v\in T_pX:v\perp\dot\gamma\}$, with apex $o$, so that $T_pX=L_p\times\R\dot\gamma$ \cite[7.15]{BuragoGromovPerelman92}. For $\varphi\in[0,a)$ let $D_\varphi:=\bigl(\varphi+a\,\mathbb Z[\tfrac12]\bigr)\cap[\alpha,\beta]$ denote the shifted dyadic grid, $\mathbb Z[\tfrac12]$ being the dyadic rationals.

\begin{lem}[good phases]\label{lem:goodphase}
For a.e.\ $\varphi\in[0,a)$, every $t\in D_\varphi$ is a \emph{good time}: $\gamma(t)$ is a regular point of $X$ and lies in $\operatorname{Reg}\theta_t$, so that $T_{\gamma(t)}X\cong\R^\dd$ and $U_t=\hess_{\gamma(t)}\theta_t$ is defined.
\end{lem}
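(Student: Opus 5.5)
The plan is a Fubini argument on the product $[0,a)\times(0,a)$ of phases and times, reducing the statement to the fact that the set of bad times along the fixed ray is Lebesgue-null.

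Let
\[
B:=\{t\in(0,a):\ \gamma(t)\in S_X\ \text{or}\ \gamma(t)\notin\operatorname{Reg}\theta_t\}
\]
be the set of bad times. By the choice of the ray in the full-measure family of Step~0 (the same property recorded in Definition~\ref{def:alexentropytensor}: for $\mu_H$-a.e.\ starting point $x$ and a.e.\ $s>0$, $\gamma(s)$ is regular and lies in $\operatorname{Reg}\theta_s$), $B$ has one-dimensional Lebesgue measure zero. If this is not already built into Step~0, I would derive it by a Fubini argument on the product of starting points and times. For each fixed $s$, the map $x\mapsto\gamma_x(s)$ pushes $\mu_0$ to $\mu_s\ll\mu_H$ (absolute continuity of interpolants, as in \cite{petrunin2010alexandrov}). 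Since $S_X$ and $X\setminus\operatorname{Reg}\theta_s$ are $\mu_H$-null, for each $s$ the set of $x$ with $s$ bad is null, and Fubini exchanges the quantifiers.

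Next, for a fixed dyadic rational $q\in\mathbb Z[\tfrac12]$, consider the set of phases $\varphi\in[0,a)$ with $\varphi+aq\in[\alpha,\beta]\cap B$. This set is a translate of a subset of $B$, hence Lebesgue-null. Since $\mathbb Z[\tfrac12]$ is countable, the union over $q$ of these null sets is null. Every $\varphi$ outside this union has $D_\varphi\cap B=\emptyset$, so every $t\in D_\varphi$ is a good time. At such $t$, $T_{\gamma(t)}X\cong\R^\dd$ by regularity, and $U_t$ is defined by \eqref{eq:hessdefprelim}.

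The only delicate step is the first one: showing that $B$ is measurable and null. Measurability of $S_X$ is standard: it is a Borel set, since regularity is characterised by $\mathrm{vol}\,\Sigma_p=\mathrm{vol}\,\mathbb S^{\dd-1}$, and $p\mapsto\mathrm{vol}\,\Sigma_p$ is lower semicontinuous. Measurability of $\{(s,y):y\in\operatorname{Reg}\theta_s\}$ follows from writing second-order differentiability via countably many rational difference quotients. Given measurability, nullity reduces, as above, to the a.e.\ statement already assumed in Definition~\ref{def:alexentropytensor}.
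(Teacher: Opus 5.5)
Your proposal is correct and follows essentially the paper's proof. Both show the bad-time set along the ray is Lebesgue-null via the Fubini argument of Lemma~\ref{lem:nullrays}, which uses $\mu_s\ll\mu_H$ for the interpolants, and then take a countable union of null sets over the dyadic shifts $\varphi\mapsto\varphi+ka2^{-m}$. Your extra remarks on measurability of $S_X$ and of $\{(s,y):y\in\operatorname{Reg}\theta_s\}$ address the product-measurability hypothesis of that lemma, which the paper's proof leaves implicit.
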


\begin{proof}
Bad times form a Lebesgue-null subset of $(0,a)$. Indeed, a time $t$ is bad iff $\gamma(t)\in N_t:=S_X\cup\bigl(X\setminus\operatorname{Reg}\theta_t\bigr)$, and each $N_t$ is $\mu_H$-null ($\mu_H(S_X)=0$ by \cite[Thm.~A]{OtsuShioya94}, and $\mu_H(X\setminus\operatorname{Reg}\theta_t)=0$ by Alexandrov's theorem in the DC charts \cite{Perelman_DC,otsu30differential}, $\theta_t$ being semiconcave). By Lemma~\ref{lem:nullrays} of Appendix~\ref{app:rays}, for $\mu_H$-a.e.\ transport ray $\gamma$ the set $\{t:\gamma(t)\in N_t\}$ is Lebesgue-null. For fixed $k\in\mathbb Z$, $m\in\mathbb N$ the shift $\varphi\mapsto\varphi+ka2^{-m}$ preserves Lebesgue-null sets; intersecting over the countably many $(k,m)$ gives the claim.
\end{proof}

Fix henceforth a good phase $\varphi$ and write $p^m_k:=\gamma(\varphi+ka2^{-m})$ for
the level-$m$ grid points lying in $[\alpha,\beta]$. For consecutive level-$m$ grid
points $f=p^m_k$, $g=p^m_{k+1}$ define, following Petrunin's construction \cite[\S1.4]{petrunin1998parallel},
\[
\pi^m_k:=\mathrm{pr}_{L}\circ\log_{g}\circ\exp_{f},
\]
where we recall that for all points $p_k^m=\gamma(s_k^m)$, the tangent space splits orthogonally as
\[
T_{\gamma(s_k^m)}X\ =\ \R\dot\gamma(s_k^m)\ \oplus\ L_{\gamma(s_k^m)},
\qquad L_{\gamma(s_k^m)}:=\dot\gamma(s_k^m)^{\perp},
\]
and $\mathrm{pr}_{L}: T_{p_k^m} X \to L_{p_k^m} $ is the projection onto the orthogonal part.
We define then the \emph{one-step map}
\[
\Pi^m_k(x):=\omega\text{-}\lim_j\ \frac{\pi^m_k(\varepsilon_j x)}{\varepsilon_j}\ \in
L_{p^m_{k+1}},\qquad x\in L_{p^m_k},
\]
and for grid points $s<t$ of level $\le m$, we define the \emph{level-$m$ composite}
\[
\tau^m_{s\to t}:=\Pi^m_{k_t-1}\circ\cdots\circ\Pi^m_{k_s},
\]
composing the level-$m$ one-step maps that subdivide $[s,t]$; and finally we define the \emph{transport}
\[
\tau_{s\to t}:=\omega\text{-}\lim_m\ \tau^m_{s\to t},
\]
pointwise on $L_{\gamma(s)}$. Reverse composites $\sigma^m_{t\to s}$, $\sigma_{t\to s}$
are defined in the same way with the roles of $f,g$ exchanged.

\begin{lem}[one-step maps]\label{lem:onestep}
Each $\Pi^m_k$ is well defined on all of $L_{p^m_k}$, fixes the apex $o$, and is noncontracting; and there is a constant $C$ such that for any $\varepsilon>0$ and $r\in T_{p^m_k}X$,
$$
\pi^m_k(\varepsilon r)/\varepsilon\in B_{C|r|}(o)
$$
and $C$ may be chosen uniformly over $m,k$, all steps lying in $\gamma|_{[\alpha,\beta]}$, whose distance to the endpoints of the ray is bounded below.
\end{lem}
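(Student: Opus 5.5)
The plan is to establish the three assertions in this order: the uniform bound, then well-definedness, then noncontraction and the apex property. Everything reduces to comparison geometry around the short geodesic segment $[f,g]$ of $\gamma$. Write $f=p^m_k$, $g=p^m_{k+1}$ and $\ell:=d(f,g)$. Because $[\alpha,\beta]$ stays a positive distance $\delta$ from the endpoints of the ray, $\gamma$ extends as a minimizing geodesic beyond both $f$ and $g$ by at least $\delta$. This extension is the fact that gives uniformity in $m$ and $k$.

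\emph{Uniform bound.} Take $r\in T_fX$ and set $q=\exp_f(\varepsilon r)$, so that $d(f,q)\le\varepsilon|r|$. The quantity to control is $|\log_g q|$, taken after projecting onto $L_g$. Since $\mathrm{pr}_L$ is $1$-Lipschitz on $T_gX=L_g\times\R\dot\gamma$ and fixes $o$, it suffices to bound the $L_g$-component of $\log_g q$. That component has size $d(g,q)\sin\angle(\log_gq,\pm\dot\gamma)$, where the sign is chosen so the angle is the one measured towards the appropriate end of the ray.

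I will estimate it with Toponogov's hinge comparison in $\mathrm{CBB}(0)$, applied to the triangles $\triangle g f q$ and $\triangle g f' q$. Here $f'$ is a point on $\gamma$ beyond $f$, away from $g$, at distance of order $\delta$ from $f$. The angle at $f$ between $[fq]$ and the ray splits the displacement $\varepsilon r$ into a tangential part and a normal part of size $\varepsilon|r_\perp|$. Comparing distances, the normal displacement seen from $g$ is at most the normal displacement at $f$ multiplied by a ratio of the form $(\ell+\delta')/\delta'$. This holds up to $o(\varepsilon)$ errors, and those are absorbed because $\varepsilon|r|\ll\delta$. Since the ratio depends only on $\delta$ and $\beta-\alpha$, the constant $C$ is uniform.

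The main obstacle is exactly this step: converting a lower curvature bound into an \emph{upper} bound on how far $q$ moves normally as seen from $g$. Lower curvature bounds naturally give upper bounds on angles only through the extendibility of $\gamma$ beyond $f$. My first attempt would be the quadruple (angle-sum) condition at $f$, using the two opposite directions $\pm\dot\gamma$ of the extended geodesic.

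\emph{Well-definedness and the apex.} Once the family $\pi^m_k(\varepsilon_j x)/\varepsilon_j$ is known to lie in the compact ball $\bar B_{C|x|}(o)\subset L_g$ (proper, since $L_g\cong\R^{\dd-1}$ at good times), its $\omega$-limit exists and is unique, with no extraction needed. For $x=o$ we have $\pi^m_k(o)=\mathrm{pr}_L\log_g f=\mathrm{pr}_L(\ell\,\dot\gamma\text{-direction})=o$, so $\Pi^m_k(o)=o$.

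\emph{Noncontraction.} This follows Petrunin's argument for \cite[\S1.4]{petrunin1998parallel}. For $x,y\in L_f$, the points $\exp_f(\varepsilon x)$ and $\exp_f(\varepsilon y)$ are at distance $\varepsilon|x-y|+o(\varepsilon)$. Comparison in $\mathrm{CBB}(0)$ from the vertex $g$ (the monotonicity of comparison angles, with $\log_g$ viewed as a limit of rescalings) gives $|\log_gq_x-\log_gq_y|\ge d(q_x,q_y)-o(\varepsilon)$ in the limit. I will carry this out by using the first variation formula to check that the tangential components are $o(\varepsilon)$-close: both points lie at distance $\ell+o(\varepsilon)$ from $g$, since $x,y\perp\dot\gamma$. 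Then the projection loses only $o(\varepsilon)$. Dividing by $\varepsilon_j$ and passing to the $\omega$-limit gives $|\Pi^m_kx-\Pi^m_ky|\ge|x-y|$.
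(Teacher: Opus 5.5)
Your reductions are sound and follow the paper's skeleton; the paper itself just cites Petrunin's Lemmas of \S1.4 (the bound) and \S1.5 (noncontraction). The bound plus properness of $L_g$ gives the $\omega$-limit. The apex is fixed because $\log_g f$ is tangential. Noncontraction follows from $|\log_g a-\log_g b|\ge|ab|$ in $\mathrm{curv}\ge0$, once the tangential components of $\log_g q_x$ and $\log_g q_y$ agree up to $o(\varepsilon)$.

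That last step, however, also rests on the bound. Knowing $d(g,q_x)=\ell+o(\varepsilon)$ gives a tangential component $\ell+o(\varepsilon)$ only if the normal component is already $O(\varepsilon)$; if it were of order $\sqrt{\varepsilon}$, the tangential component would drop by order $\varepsilon$. So the whole lemma hinges on the uniform bound, which is exactly the step you leave open, and the tools you propose cannot by themselves close it. Hinge comparison in $\triangle gfq$ and $\triangle gf'q$, the triangle inequality, and the quadruple condition at $f$ give upper bounds on distances, and at the vertex $g$ only \emph{lower} bounds on angles ($\angle fgq\ge\tilde\angle fgq$). At best they pin down $d(g,q)$ to second order. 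But the normal component is $d(g,q)\sin\angle fgq$, so you need an \emph{upper} bound $\angle fgq=O(\varepsilon|r|/\ell)$. Equivalently, you need a lower bound $\ell-\varepsilon\langle r,u\rangle-O(\varepsilon^2|r|^2)$ on the tangential component of $\log_g q$, where $u$ is the direction of $\gamma$ at $f$ towards $g$. First-order control ($d(f,q)\le\varepsilon|r|$ plus triangle inequalities) only yields a normal component of order $\sqrt{\varepsilon}$.

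The missing idea is this: upper bounds on angles at $g$ come from the supplementary angle at $g$. So it is the extension of $\gamma$ beyond $g$, not beyond $f$, that controls the angle; the extension beyond $f$ supplies a second-order \emph{lower} bound on distances.

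Concretely, let $c$ be the arclength parametrisation of the ray with $c(-\delta')=f'$, $c(0)=f$, $c(\ell)=g$, $c(\ell+\delta')=g'$, and set $\phi(s):=\tfrac12 d(q,c(s))^2$.
\begin{itemize}
\item Since $g$ is interior, the first variation formula and $\angle(\xi,\dot c^+)+\angle(\xi,\dot c^-)=\pi$ give $d(g,q)\cos\angle fgq\ge\phi'(\ell^+)$ for every choice of $[gq]$.
\item Concavity of $\phi(s)-\tfrac12 s^2$ gives $\phi'(\ell^+)\ge\ell+\bigl[\bigl(\phi(\ell+\delta')-\tfrac12(\ell+\delta')^2\bigr)-\bigl(\phi(\ell)-\tfrac12\ell^2\bigr)\bigr]/\delta'$.
\item Bound $\phi(\ell)$ above by the hinge comparison along the quasigeodesic $t\mapsto\exp_f(tr/|r|)$, using its initial direction $r$ rather than the direction of $[fq]$. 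This gives $d(g,q)^2\le\ell^2-2\ell\varepsilon\langle r,u\rangle+\varepsilon^2|r|^2$.
\item Bound $d(q,g')$ below by $d(f',g')-d(f',q)$, where the same comparison towards $f'$ gives $d(f',q)\le\delta'+\varepsilon\langle r,u\rangle+\varepsilon^2|r|^2/(2\delta')$.
\end{itemize}
Together these give $d(g,q)\cos\angle fgq\ge\ell-\varepsilon\langle r,u\rangle-K\varepsilon^2|r|^2$ with $K=K(\ell,\delta')$. Subtracting squares then gives a normal component at most $C\varepsilon|r|$.

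These are honest inequalities valid for $\varepsilon|r|\le c\,\delta'$. Your ``$o(\varepsilon)$ errors'' would not give a bound valid for every $\varepsilon>0$ uniformly in $m,k$, as the statement requires. For $\varepsilon|r|\ge c\,\delta'$, use the trivial bound $|\pi(\varepsilon r)|\le d(g,q)\le\ell+\varepsilon|r|$.
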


\begin{proof}
By the Lemma of \cite[\S1.4]{petrunin1998parallel}, $\pi^m_k(\varepsilon r)/\varepsilon\in B_{C|r|}(o)$ with $C$ depending only on the distance from the step to the endpoints of $\gamma$; bounded subsets of the proper cone $L_{p^m_{k+1}}$ have unique $\omega$-limits, so $\Pi^m_k$ is defined everywhere. The apex is fixed since $\mathrm{pr}_L\log_g f=o$. Noncontractivity is the Lemma of \cite[\S1.5]{petrunin1998parallel}, which survives the replacement of subsequential limits by $\omega$-limits.
\end{proof}

\begin{lem}[norm growth]\label{lem:normgrowth}
There is $C=C(\alpha,\beta,\gamma)$ such that for all grid points $s<t$ of level $\le m$, with $N_m=(t-s)2^m/a$ the number of level-$m$ steps subdividing $[s,t]$,
\[
\bigl|\tau^m_{s\to t}(v)\bigr|\ \le\ \Bigl(1+\frac{C}{N_m}\Bigr)|v|,
\qquad v\in L_{\gamma(s)},
\]
and likewise for the reverse composites $\sigma^m_{t\to s}$.
\end{lem}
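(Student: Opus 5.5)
The plan is to control each one-step map $\Pi^m_k$ by a quadratic-in-step-length error and then telescope over the $N_m$ steps. Let $h_m:=a2^{-m}$ be the level-$m$ step, so that $N_m h_m=t-s\le\beta-\alpha$.

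\emph{Key one-step estimate.} We aim to show that there is $C_0=C_0(\alpha,\beta,\gamma)$ with
\[
|\Pi^m_k(x)|\ \le\ \bigl(1+C_0h_m^2\bigr)|x|,\qquad x\in L_{p^m_k}.
\]
Since $\Pi^m_k$ is an $\omega$-limit of rescalings of $\pi^m_k$, this is a first-order statement about small variations. For $v\perp\dot\gamma$ at $f$, the point $\exp_f(\varepsilon v)$ is compared with $g$. By first variation, $\log_g\exp_f(\varepsilon v)$ has length $\sqrt{h_m^2|\dot\gamma|^2+\varepsilon^2|v|^2}+o(\varepsilon)$, because the angle at $f$ is $\pi/2$.

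Its normal component is controlled by the comparison angle at $g$. Since $\gamma$ extends beyond $f$ and $g$ by at least $\min(\alpha,a-\beta)$, Toponogov comparison for $\curv\ge0$ in the triangle $(\gamma(\alpha'),g,\exp_f(\varepsilon v))$ bounds the tangential defect from below. This yields $|\mathrm{pr}_L\log_g\exp_f(\varepsilon v)|\le\varepsilon|v|(1+Ch_m^2)+o(\varepsilon)$.

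The $h_m^2$ gain, rather than $h_m$, is the point. It follows from the symmetric position of the step strictly inside the extended geodesic, as in Petrunin's Lemma \cite[\S1.4]{petrunin1998parallel}, whose constant $C$ depends only on the distance to the endpoints. Lemma~\ref{lem:onestep} gives only $C|r|$; I will need to re-run its proof, keeping track of the dependence on the step length.

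\emph{Telescoping.} Composing the $N_m$ one-step bounds gives
\[
|\tau^m_{s\to t}v|\ \le\ (1+C_0h_m^2)^{N_m}|v|\ \le\ \exp\bigl(C_0h_m^2N_m\bigr)|v|\ =\ \exp\Bigl(\frac{C_0(t-s)^2}{N_m}\Bigr)|v|.
\]
For $N_m\ge1$ this is at most $(1+C/N_m)|v|$ with $C:=C_0(\beta-\alpha)^2e^{C_0(\beta-\alpha)^2}$. The reverse composites $\sigma^m_{t\to s}$ are handled identically, since the one-step estimate is symmetric in $f,g$.

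\emph{Main obstacle.} The hard step is obtaining the quadratic $h_m^2$ one-step bound uniformly at regular grid points where only a $C^0$ structure is available. I would first try the Toponogov hinge argument with auxiliary points $\gamma(\alpha/2)$ and $\gamma((\beta+a)/2)$ on both sides. Applying it on both sides cancels the first-order term in $h_m$, and what remains is of order $h_m^2$ divided by the distance to those points.

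If only a bound $1+Ch_m$ per step is available, the telescoped product is bounded but not $1+C/N_m$. In that case I would instead compare $\tau^m_{s\to t}$ directly with the single coarse step $\pi$ from $\gamma(s)$ to $\gamma(t)$, using the hinge estimate at that scale.
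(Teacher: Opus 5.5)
The gap is the one-step estimate $|\Pi^m_k(x)|\le(1+C_0h_m^2)|x|$: your telescoping rests on it, and it is false under $\curv\ge0$ alone. The two-sided hinge argument you sketch does not yield it. Write $h$ for the metric step length. Carry the argument out: the hinge at $f$ towards $a^-$ gives $|a^-\exp_f(\varepsilon x)|\le|a^-f|+\varepsilon^2|x|^2/(2|a^-f|)$. The hinge at $g$ towards $a^+$, with the triangle inequality through $a^-$, then bounds the tangential component of $\log_g\exp_f(\varepsilon x)$ below by $h-C_1\varepsilon^2|x|^2$. Hence $|\mathrm{pr}_L\log_g\exp_f(\varepsilon x)|^2\le(1+2C_1h)\varepsilon^2|x|^2+o(\varepsilon^2)$: the gain is linear in $h$, and nothing cancels.

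The linear term is genuinely present. Let $X$ be the doubled disk, two flat disks of radius $R$ glued along their boundary circles. It is $\mathrm{CBB}(0)$, every point is regular (seam points have total angle $\pi+\pi$), and a diameter crossing the seam is a geodesic. Take a step $[fg]$ of length $h$ whose midpoint lies on the seam. The shortest path from $g$ to $\exp_f(\varepsilon x)$ is the billiard path reflecting off the seam at height $\varepsilon/(2-h/R)+O(\varepsilon^3)$. Unfolding gives $|\Pi(x)|=|x|/(1-h/(2R))=(1+h/(2R)+O(h^2))|x|$. If the seam sits at relative position $u$ in the step, the factor is $1/(1-2u(1-u)h/R)$, and for a.e.\ phase $u$ lies in $[\tfrac14,\tfrac34]$ at infinitely many levels.

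The $h^2$ gain you have in mind is the smooth Jacobi-field estimate $1+O(Kh^2)$, which needs an upper curvature bound. Here curvature of density $2/R$ is concentrated on the seam. With the correct per-step bound $1+Ch$ you only get $(1+Ch)^{N_m}\approx e^{C(t-s)}$, as you note yourself. Your fallback is not an argument: nothing relates $\tau^m_{s\to t}$ to the coarse map $\mathrm{pr}_L\circ\log_{\gamma(t)}\circ\exp_{\gamma(s)}$, whose own bound would be $1+C(t-s)$ anyway.

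The missing idea is to estimate the whole chain at once, so growth cannot accumulate. This is Petrunin's \cite[Lemma~1.8]{petrunin1998parallel}, which the paper simply cites; the same computation appears in step (c1) of the proof of Proposition~\ref{prop:SVmidpoint} with $\lambda=0$. Put $x_0=v$, $x_{i+1}=\Pi_i(x_i)$, $z_i=\exp_{p_i}(\varepsilon_jx_i)$. Inequality \eqref{eq:onestepomega} with $x'=\Pi(x_*)$ gives $|z_iz_{i+1}|\le h+\frac{\varepsilon_j^2}{2h}\bigl(|x_i|^2-|x_{i+1}|^2\bigr)+o(\varepsilon_j^2)$, and these defects telescope along the chain. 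Close the chain with the endpoint hinge bounds $|a^-z_0|\le|a^-p_0|+C_0|x_0|^2\varepsilon_j^2$ and $|z_Na^+|\le|p_Na^+|+C_0|x_N|^2\varepsilon_j^2$. The triangle inequality and the collinearity $|a^-a^+|=|a^-p_0|+Nh+|p_Na^+|$ then give
\[
\frac{|x_N|^2-|x_0|^2}{2h}\ \le\ C_0\bigl(|x_0|^2+|x_N|^2\bigr),
\qquad\text{i.e.}\qquad
|x_N|^2\ \le\ \frac{1+2C_0h}{1-2C_0h}\,|x_0|^2 .
\]
Since $h=(t-s)|\dot\gamma|/N_m$, this is $|x_N|\le(1+C/N_m)|v|$ once $2C_0h\le\tfrac12$. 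The finitely many coarser levels are absorbed into $C$ through the crude bound of Lemma~\ref{lem:onestep}. The total growth of the composite is thus of the order of a single step's growth; in the doubled disk only the step straddling the seam grows. Step-by-step bounds cannot see this.
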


\begin{proof}
This is \cite[Lemma~1.8]{petrunin1998parallel}, whose constant depends only on the distance from the subdivided segment to the endpoints of the geodesic, hence is uniform here. 
\end{proof}

\begin{lem}[the limits are orthogonal maps]\label{lem:limitisometry}
For all grid points $s<t$ in $D_\varphi$, the map $\tau_{s\to t}$ is a surjective linear isometry $L_{\gamma(s)}\to L_{\gamma(t)}$, and likewise $\sigma_{t\to s}$.
\end{lem}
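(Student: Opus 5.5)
We follow Petrunin's argument in \cite[\S1.6--1.8]{petrunin1998parallel}, checking at each stage that replacing subsequential limits by $\omega$-limits does no harm. The proof has four steps.

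\emph{Noncontracting.} By Lemma~\ref{lem:onestep}, each one-step map $\Pi^m_k$ is noncontracting: $|\Pi^m_k x-\Pi^m_k y|\ge|x-y|$. Hence so is every level-$m$ composite $\tau^m_{s\to t}$. The inequality survives pointwise $\omega$-limits, so $\tau_{s\to t}$ is noncontracting, $|\tau_{s\to t}x-\tau_{s\to t}y|\ge|x-y|$.

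\emph{Norm-preserving.} Lemma~\ref{lem:normgrowth} gives $|\tau^m_{s\to t}v|\le(1+C/N_m)|v|$. Since $N_m\to\infty$ as $m\to\infty$, taking the $\omega$-limit yields $|\tau_{s\to t}v|\le|v|$. Noncontraction with $y=o$, together with $\Pi^m_k(o)=o$, gives the reverse inequality. Hence $\tau_{s\to t}$ fixes the apex and preserves norms.

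\emph{Distance-preserving.} Here we use compactness and the equality of dimensions. Good phases make $\gamma(s)$ and $\gamma(t)$ regular, so $L_{\gamma(s)}$ and $L_{\gamma(t)}$ are both Euclidean of dimension $\dd-1$. Restrict $\tau_{s\to t}$ to the unit spheres. It maps the compact unit sphere $S_s$ of $L_{\gamma(s)}$ into the unit sphere $S_t$ of $L_{\gamma(t)}$, and it is noncontracting.

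Apply the same reasoning to the reverse composite $\sigma_{t\to s}$. Then $\sigma_{t\to s}\circ\tau_{s\to t}$ is a noncontracting self-map of the compact metric space $S_s$. A classical fact states that a noncontracting self-map of a compact metric space is an isometry onto its image, and in fact surjective. Applying this to the composition forces $\tau_{s\to t}|_{S_s}$ to be distance-preserving.

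Failing that route, there is a direct argument. A noncontracting map from $S_s$ into an isometric copy $S_t$ of $\mathbb S^{\dd-2}$ cannot strictly increase any distance. Otherwise a volume or packing-number comparison of $\varepsilon$-separated sets contradicts the equality of Hausdorff measures, since a noncontracting map cannot increase the minimal covering numbers of its image beyond those of $S_t$.

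Together with norm preservation and the cone structure, distance preservation on spheres gives $|\tau v-\tau w|=|v-w|$ for all $v,w$. We need $\tau(\lambda v)=\lambda\tau(v)$ for this. It follows from the scale invariance of the construction: $\Pi^m_k$ is defined as a blow-up limit, so it commutes with dilations, $\Pi^m_k(\lambda x)=\lambda\Pi^m_k(x)$ for $\lambda>0$. This property passes to the composites and to their $\omega$-limit.

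\emph{Linear and surjective.} A distance-preserving map between Euclidean spaces that fixes the origin is linear by Mazur--Ulam. Between spaces of equal finite dimension it is then surjective. The same argument applies verbatim to $\sigma_{t\to s}$.

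\emph{Main obstacle.} The delicate step is passing from "noncontracting and norm-preserving" to "distance-preserving". Petrunin obtains this from an additional \emph{nonexpanding-on-average} estimate. Here we intend to replace it with the compactness and packing argument on the unit sphere, which uses the regularity of grid points guaranteed by Lemma~\ref{lem:goodphase}. We expect most care to be needed in making sure the packing argument indeed excludes a strict expansion.
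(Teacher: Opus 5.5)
Your skeleton matches the paper's. You use noncontraction of the composites (Lemma~\ref{lem:onestep}), the norm bound $|\tau v|\le|v|$ from Lemma~\ref{lem:normgrowth}, composition with the reverse map $\sigma_{t\to s}$ to get a noncontracting self-map of a compact set, the folklore lemma, and Mazur--Ulam at regular grid times. The gap is in restricting to the unit spheres. That restriction forces you to need positive homogeneity $\tau(\lambda v)=\lambda\tau(v)$ to pass from distances on $S_s$ to distances on all of $L_{\gamma(s)}$. Your justification, that $\Pi^m_k$ is a blow-up limit and so commutes with dilations, fails for this construction. The one-step map is an $\omega$-limit along the fixed scales $\varepsilon_j=1/j$, and
\[
\Pi^m_k(\lambda x)=\lambda\cdot\omega\text{-}\lim_j\frac{\pi^m_k\bigl(\lambda\varepsilon_j x\bigr)}{\lambda\varepsilon_j}
\]
is the $\omega$-limit of the same function $\varepsilon\mapsto\pi^m_k(\varepsilon x)/\varepsilon$ along the different scale sequence $\lambda/j$. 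It equals $\lambda\Pi^m_k(x)$ if the genuine limit as $\varepsilon\to0$ exists. That is not known here, and it is the reason a fixed ultrafilter is used at all. Even for $\lambda=1/k$ you get the limit along the pushforward of $\omega$ under $j\mapsto kj$, which in general is a different ultrafilter. So homogeneity of the one-step maps, and hence of the composites, is not available as an input. For $\tau_{s\to t}$ it holds only a posteriori, as a consequence of linearity. Your packing alternative, as phrased on spheres, has the same problem.

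The fix is simply not to restrict to spheres. Since $\tau$ and $\sigma$ are both norm-nonincreasing, $F=\sigma\circ\tau$ maps each compact ball $\clball_R(o)\subset L_{\gamma(s)}$ into itself and is noncontracting. By the folklore lemma it is therefore a surjective isometry of that ball. Then $|xy|=|F(x)F(y)|\ge|\tau x\,\tau y|\ge|xy|$ gives distance preservation on every ball with no homogeneity needed. Mazur--Ulam (or polarisation) together with equality of dimensions then gives linearity and surjectivity; this is exactly the paper's proof. Alternatively, since both cones are $\cong\R^{\dd-1}$ at good times, you can compose $\tau$ with any apex-fixing isometry from the target ball onto the source ball and apply the folklore lemma directly, which dispenses with $\sigma$. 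Once phrased on balls, the step you flagged as the main obstacle is just the compact self-map lemma; no averaging or packing estimate is needed.
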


\begin{proof}
Write $\tau:=\tau_{s\to t}$, $\sigma:=\sigma_{t\to s}$. Both are noncontracting ($\omega$-limits of compositions of noncontracting maps, Lemma~\ref{lem:onestep}) and satisfy $|\tau v|\le\lim_m(1+C/N_m)|v|=|v|$ and $|\sigma w|\le|w|$ (Lemma~\ref{lem:normgrowth}). Hence $F:=\sigma\circ\tau$ is a noncontracting self-map of the compact ball $\clball_R(o)\subset L_{\gamma(s)}$ for every $R>0$, and is therefore a surjective isometry of that ball (the folklore lemma of \cite[\S1.2]{petrunin1998parallel}). From $|xy|=|F(x)F(y)|\ge|\tau x\,\tau y|\ge|xy|$ the map $\tau$ is distance-preserving, and surjectivity of $F$ and of $\tau\circ\sigma$ gives surjectivity of $\tau$. Since grid times are good (Lemma~\ref{lem:goodphase}), $L_{\gamma(s)}$ and $L_{\gamma(t)}$ are Euclidean cones $\cong\R^{\dd-1}$, and a surjective, apex-fixing isometry between Euclidean spaces is linear (Mazur--Ulam).
\end{proof}

\begin{lem}[composition lemma]\label{lem:graphlimit}
Let $A_m$ be noncontracting maps between fixed compact balls, converging pointwise
($\omega$) to a surjective isometry $A$. If $y_m\to y$ ($\omega$), then
$A_m(y_m)\to A(y)$ ($\omega$). Consequently
$\omega\text{-}\lim_m(A_m\circ B_m)=A\circ B$ pointwise whenever $B_m\to B$ pointwise
($\omega$).
\end{lem}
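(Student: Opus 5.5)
The plan is to prove the first assertion by a triangle-inequality argument that uses the isometry property of $A$ in an essential way, and then to deduce the composition statement from it. The maps $A_m$ are noncontracting, so $|A_m(x)A_m(x')|\ge|xx'|$, but no upper Lipschitz bound on $A_m$ is available. It is therefore not possible to pass from $A_m(y_m)$ to $A_m(y)$ directly. Instead I will pin down $z:=\omega\text{-}\lim_m A_m(y_m)$, which exists because all $A_m(y_m)$ lie in a fixed compact ball. The goal is to show $z=A(y)$ by comparing $z$ with the points $A(x)$ for $x$ in a finite net.

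Fix $\delta>0$ and choose a finite $\delta$-net $\{x_1,\dots,x_N\}$ of the compact source ball. Along $\omega$ we have $A_m(x_i)\to A(x_i)$ for each of these finitely many $x_i$. Passing to the limit in the noncontraction inequality $|A_m(y_m)A_m(x_i)|\ge|y_m x_i|$ gives
\[
|z\,A(x_i)|\ \ge\ |y\,x_i|\ =\ |A(y)\,A(x_i)|\qquad\text{for all } i,
\]
where the last equality uses that $A$ is an isometry. So $z$ is at least as far from every $A(x_i)$ as $A(y)$ is. Since $A$ is surjective onto the target ball, the points $A(x_i)$ form a $\delta$-net of the target ball. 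Pick $i$ with $|A(x_i)\,z|\le\delta$; this is possible because $z$ lies in the target ball. Then $|A(y)A(x_i)|\le|zA(x_i)|\le\delta$, and hence $|zA(y)|\le2\delta$. Letting $\delta\to0$ yields $z=A(y)$.

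The point I expect to need the most care is this use of surjectivity. Noncontraction alone only tells us that $z$ is "far" from the reference points, and it is surjectivity of $A$ that supplies a reference point close to $z$. For that step I have to make sure that $z$ actually lies in the ball that $A$ maps onto. This holds when the $A_m$ map into that fixed compact ball, which is how the balls are set up in the applications (Lemma~\ref{lem:onestep}, Lemma~\ref{lem:normgrowth}). If only a larger ball is available, I will enlarge radii and apply surjectivity of $A$ on concentric balls, as in Lemma~\ref{lem:limitisometry}.

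For the consequence, fix $x$ and put $y_m:=B_m(x)$, so that $y_m\to B(x)$ along $\omega$. The first part then gives $A_m(B_m(x))\to A(B(x))$, which is exactly pointwise convergence $A_m\circ B_m\to A\circ B$. The $B_m$ need only take values in the compact source ball of the $A_m$; no isometry or noncontraction hypothesis on $B_m$ is needed.
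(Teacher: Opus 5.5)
Your proof is correct and is essentially the paper's argument: pass the noncontraction inequality to the $\omega$-limit to get $|z\,A(x)|\ge|A(y)\,A(x)|$, then use surjectivity of $A$ to compare with a reference point near $z$. The paper takes the single point $x=A^{-1}(z)$, which immediately gives $0\ge|A(y)\,z|$; since each fixed $x$ yields its own $\omega$-limit and only one is needed, your finite $\delta$-net and the limit $\delta\to0$ are harmless but unnecessary.
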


\begin{proof}
By compactness set $z:=\omega\text{-}\lim A_m(y_m)$. For every $x$ in the source,
$|A_m(y_m)\,A_m(x)|\ge|y_m\,x|$; taking $\omega$-limits,
$|z\,A(x)|\ge|y\,x|=|A(y)\,A(x)|$. Since $A$ is surjective, $A(x)$ ranges over the whole
target ball; choosing $x$ with $A(x)=z$ yields $0\ge|A(y)\,z|$, i.e.\ $z=A(y)$.
\end{proof}

\begin{lem}[exact cocycle on the grid]\label{lem:taucocycle}
For a.e.\ phase $\varphi$, the family $\{\tau_{s\to t}\}_{s<t\in D_\varphi}$ consists of
surjective linear isometries $L_{\gamma(s)}\to L_{\gamma(t)}$ satisfying the exact
cocycle identity
\begin{equation}\label{eq:taucocycle}
\tau_{r\to t}\circ\tau_{s\to r}=\tau_{s\to t},\qquad s<r<t\ \text{in }D_\varphi .
\end{equation}
\end{lem}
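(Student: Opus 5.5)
The isometry part of the statement is already Lemma~\ref{lem:limitisometry}, applied for a good phase $\varphi$ (Lemma~\ref{lem:goodphase}). What remains is the cocycle identity \eqref{eq:taucocycle}. The plan is to prove it at each finite level exactly, and then pass to the $\omega$-limit in $m$ using the composition lemma.

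\emph{Step 1: exact cocycle at finite level.} Fix grid points $s<r<t$ in $D_\varphi$, and let $m_0$ be the smallest level such that all three belong to the level-$m_0$ grid; such a level exists because $D_\varphi$ is a shifted dyadic grid. For every $m\ge m_0$, the level-$m$ steps subdividing $[s,t]$ are the concatenation of those subdividing $[s,r]$ and those subdividing $[r,t]$. Hence, directly from the definition of $\tau^m$ as an ordered composition of the one-step maps $\Pi^m_k$, we get
\[
\tau^m_{r\to t}\circ\tau^m_{s\to r}=\tau^m_{s\to t}\qquad(m\ge m_0).
\]
No limit is involved here. The one-step maps are fixed single-valued maps, each defined by an $\omega$-limit in $j$ that does not depend on which composite it is used in. Since the set $\{m\ge m_0\}$ is cofinite, it belongs to $\omega$.

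\emph{Step 2: passing to the limit.} We need
\[
\omega\text{-}\lim_m\bigl(\tau^m_{r\to t}\circ\tau^m_{s\to r}\bigr)=\tau_{r\to t}\circ\tau_{s\to r}
\]
pointwise. This is the main obstacle, since $\omega$-limits do not in general commute with composition. We apply Lemma~\ref{lem:graphlimit} with $A_m=\tau^m_{r\to t}$ and $B_m=\tau^m_{s\to r}$. The required hypotheses are checked as follows.
\begin{itemize}
\item Each $A_m$ is noncontracting, being a composition of noncontracting one-step maps (Lemma~\ref{lem:onestep}).
\item Fix $v\in L_{\gamma(s)}$ with $|v|\le R$. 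By Lemma~\ref{lem:normgrowth}, the points $B_m(v)$ lie in the fixed compact ball $\clball_{(1+C)R}(o)\subset L_{\gamma(r)}$, and $A_m$ maps this ball into $\clball_{(1+C)^2R}(o)$. So all maps act between fixed compact balls of the proper cones. To match the lemma's setting of a surjective isometry between balls, we may instead take source and target balls of radius $R'$ and $(1+C)R'$, and restrict to the relevant image.
\item The limit $A=\tau_{r\to t}$ is a surjective linear isometry by Lemma~\ref{lem:limitisometry}. In particular, it maps each ball $\clball_\rho(o)$ onto $\clball_\rho(o)$.
\end{itemize}
The surjectivity used in the proof of Lemma~\ref{lem:graphlimit} must reach the candidate limit point $z$. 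This holds because $z$ lies in a bounded ball, and $A$ is onto every ball of that radius. With these checks, the lemma gives $A_m(B_m v)\to A(Bv)$ along $\omega$.

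\emph{Step 3: conclusion.} Combining Steps 1 and 2, for every $v$,
\[
\tau_{s\to t}(v)=\omega\text{-}\lim_m\tau^m_{s\to t}(v)=\tau_{r\to t}\bigl(\tau_{s\to r}(v)\bigr),
\]
which is \eqref{eq:taucocycle}. The argument uses the same ultrafilter $\omega$ throughout, with no subsequence extraction, which is what makes the three limits compatible. Since this holds for all triples in the countable set $D_\varphi$ and every good phase, it holds for a.e.\ $\varphi$.
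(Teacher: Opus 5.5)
Your proof is correct and is essentially the paper's argument: you establish the exact identity $\tau^m_{r\to t}\circ\tau^m_{s\to r}=\tau^m_{s\to t}$ at every level $m\ge\operatorname{level}(s,r,t)$ by concatenating the subdivisions, then pass to the $\omega$-limit in $m$ via Lemma~\ref{lem:graphlimit}, whose hypotheses you correctly supply from Lemmas~\ref{lem:onestep}, \ref{lem:normgrowth} and~\ref{lem:limitisometry}. Your aside about source and target balls of radii $R'$ and $(1+C)R'$ is superfluous, and $A=\tau_{r\to t}$ is in fact not onto the larger ball; what the composition lemma actually needs is your subsequent observation that the maps are defined on the whole cones and $A$ is onto every centred ball, so it reaches $z$.
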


\begin{proof}
At every finite level $m\ge\operatorname{level}(s,r,t)$ the identity
$\tau^m_{s\to t}=\tau^m_{r\to t}\circ\tau^m_{s\to r}$ holds \emph{exactly}: both sides
are the composition of the same level-$m$ one-step maps. Pass to $\omega$-limits in $m$
using Lemma~\ref{lem:graphlimit}, whose hypotheses hold by
Lemma~\ref{lem:limitisometry}.
\end{proof}

The family $\tau$ is, by construction, of Petrunin's type: it arises from the
same comparison-configuration limits as \cite[Thm.~1.1]{petrunin1998parallel}. What
Theorem~1.1.B of \cite{petrunin1998parallel} provides, however, is the second-variation
bound for \emph{some} pairwise transport obtained through additional volume-correction
limits (\cite[\S\S1.12--1.16]{petrunin1998parallel}), not for a prescribed family. We transcribe the proof of \cite[\S\S1.12--1.16]{petrunin1998parallel} to the fixed-ultrafilter setting. Since at grid
times all tangent cones are Euclidean (Lemma~\ref{lem:goodphase}), this transcription allows to replace Perelman's
bi-Lipschitz neighbourhood argument and the isoperimetric machinery of \cite[\S\S1.11, 1.14--1.15]{petrunin1998parallel} by an elementary rigidity lemma and the Brunn--Minkowski inequality.
However, this proves the second-variation bound for the \emph{midpoint-factorised} isometries $T_{s,t}=(\sigma_{t\to u})^{-1}\circ\tau_{s\to u}$ ($u$ the midpoint of $s,t$), not for $\tau_{s\to t}$ itself. Contrary to what one might first hope, the cocycle \eqref{eq:taucocycle} does \emph{not} collapse $T_{s,t}$ to $\tau_{s\to t}$: the second factor of the factorisation is built from the \emph{reverse} one-step maps, and
\eqref{eq:taucocycle} relates only forward composites among themselves; the collapse
would amount to the inversion identity
$\sigma_{t\to s}\circ\tau_{s\to t}=\mathrm{Id}$, which the available estimates do not
control. We therefore do \emph{not} attempt to identify the two families. Instead, the
trivialisation of Proposition~\ref{prop:cocycle} below is assembled from the midpoint-factorised blocks themselves: compositions of elementary blocks at a fixed dyadic level satisfy the cocycle by associativity and $(\mathrm{SV}_\perp)$ by the
composition lemma, and both properties survive the limit in the level.

Throughout the sequel of this subsection, a statement of the form ``$A_j\le B_j+o(\varepsilon_j^2)$
for $\omega$-almost all $j$'' means: \emph{for every $\delta>0$ the set
$\{j:A_j\le B_j+\delta\varepsilon_j^2\}$ belongs to $\omega$}. 

Given grid times $s<t$ in $D_\varphi$ at distance $\ell:=d(\gamma(s),\gamma(t))$ and a surjective linear isometry $A:L_{\gamma(s)}\to L_{\gamma(t)}$, we say that $A$ \emph{satisfies $(\mathrm{SV}_\perp)$ for the pair $(s,t)$} if for all fixed $x\in L_{\gamma(s)}$, $y\in L_{\gamma(t)}$, and for $\omega$-almost all $j$,
\begin{equation}\label{SVperp}
\bigl|\exp_{\gamma(s)}(\varepsilon_j x)\ \exp_{\gamma(t)}(\varepsilon_j y)\bigr|\ \le\ \ell+\frac{|x-A^{-1}(y)|^2}{2\ell}\,\varepsilon_j^2+o(\varepsilon_j^2)
\end{equation}
where $\exp$ is taken along quasigeodesics, as in \cite[\S1.0]{petrunin1998parallel}.

\begin{lem}[rigidity of almost-isometries of $\R^d$]\label{lem:rigidity}
Let $d\ge1$, $\delta\in(0,1]$, $R>0$, and let $A:\R^d\to\R^d$ satisfy $A(0)=0$, $|A(v)-A(w)|\ge|v-w|$ for all $v,w$, and $|A(v)|\le(1+\delta)|v|$ for all $v$. Then there is a linear isometry $I$ of $\R^d$ with
\[
|A(v)-I(v)|\ \le\ C_d\,\delta^{1/4}R\qquad\text{for all }|v|\le R,
\]
with $C_d$ depending only on $d$.
\end{lem}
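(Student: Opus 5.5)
By homogeneity we may take $R=1$: replacing $A$ by $A_R(v):=A(Rv)/R$ preserves all three hypotheses ($A_R(0)=0$, noncontraction, and $|A_R(v)|\le(1+\delta)|v|$), and a bound $|A_R-I|\le C_d\delta^{1/4}$ on the unit ball rescales to the claimed bound on $\clball_R(0)$. The plan is to show that $A$ almost preserves inner products on the unit ball, up to $O(\sqrt\delta)$. Then we compare $A$ with an orthonormalisation of the images of a basis. We expect to get an error of order $\delta^{1/2}$, which is stronger than $\delta^{1/4}$ since $\delta\le1$. The weaker exponent in the statement leaves room for losses in the bookkeeping.

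\emph{Step 1: one-sided inner product bound.} Noncontraction with $w=0$ gives $|v|\le|A v|\le(1+\delta)|v|$. Polarisation then gives, for $|v|,|w|\le1$,
\[
2\langle Av,Aw\rangle=|Av|^2+|Aw|^2-|Av-Aw|^2\le(1+\delta)^2(|v|^2+|w|^2)-|v-w|^2\le 2\langle v,w\rangle+6\delta .
\]
This is only an upper bound, because noncontraction bounds $|Av-Aw|$ from below and not from above.

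\emph{Step 2: approximate oddness.} This is the step that supplies the missing lower bound, and we expect it to be the main obstacle. For $|v|=1$ we have $|Av-A(-v)|\ge2$ while $|Av|,|A(-v)|\le1+\delta$. The parallelogram identity then gives
\[
|Av+A(-v)|^2=2|Av|^2+2|A(-v)|^2-|Av-A(-v)|^2\le 4(1+\delta)^2-4=O(\delta),
\]
so $A(-v)=-Av+O(\sqrt\delta)$. Applying Step 1 to the pair $(v,-w)$ gives $\langle Av,A(-w)\rangle\le-\langle v,w\rangle+3\delta$. Substituting $A(-w)=-Aw+O(\sqrt\delta)$ yields $\langle Av,Aw\rangle\ge\langle v,w\rangle-C\sqrt\delta$. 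Here $w$ is taken on the unit sphere and $v$ in the unit ball. Together with Step 1 this gives two-sided control $|\langle Av,Aw\rangle-\langle v,w\rangle|\le C\sqrt\delta$.

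\emph{Step 3: construction of $I$.} Fix an orthonormal basis $e_1,\dots,e_d$. By Step 2, the Gram matrix of $(Ae_i)$ is $\Id+O(\sqrt\delta)$. Gram--Schmidt is Lipschitz near the identity, so it produces an orthonormal basis $(f_i)$ with $|Ae_i-f_i|\le C_d\sqrt\delta$. We set $I e_i:=f_i$, extended linearly; this is a linear isometry. For $|v|\le1$, Step 2 with $w=e_i$ gives
\[
\langle Av,f_i\rangle=\langle Av,Ae_i\rangle+O(\sqrt\delta)=\langle v,e_i\rangle+O(\sqrt\delta).
\]
Hence the coordinates of $Av$ in the basis $(f_i)$ differ from those of $Iv$ by $O(\sqrt\delta)$ each. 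Summing over the $d$ coordinates gives $|Av-Iv|\le C_d\sqrt\delta\le C_d\delta^{1/4}$. Rescaling back to radius $R$ completes the argument.
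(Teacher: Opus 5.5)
Your proof is correct. Up to the last step it is the paper's proof. The paper also obtains the polarisation upper bound $\langle A(v),A(w)\rangle\le\langle v,w\rangle+3\delta R^2$ and the approximate oddness $|A(v)+A(-v)|\le4\sqrt\delta\,R$ from the parallelogram identity. It gets the matching lower bound by writing $A(w)=(A(w)+A(-w))-A(-w)$, and then orthonormalises the images of a basis (by L\"owdin rather than Gram--Schmidt, which is immaterial).

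The two arguments differ only in the final estimate. The paper expands $|A(v)-I(v)|^2=|A(v)|^2-2\langle A(v),I(v)\rangle+|v|^2$ and uses only the one-sided bound $\langle A(v),I(v)\rangle\ge|v|^2-C\sqrt\delta\,R^2$. This loses a square root, and it is where the exponent $1/4$ comes from. You instead expand $Av$ in the orthonormal basis $(f_i)$ of the target and use two-sided coordinate bounds, which gives $|Av-Iv|\le C_d\sqrt\delta$ outright.

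Your version uses the fact that the $f_i$ span the target, since source and target have equal dimension, and in exchange gets the better exponent. The paper's version never uses spanning, so it runs verbatim for maps $\R^d\to\R^{d'}$ with $d'\ge d$. There, your coordinate expansion would leave the component of $Av$ orthogonal to $I(\R^d)$ to be estimated separately. For the application in Proposition~\ref{prop:SVmidpoint} any positive exponent suffices, since only $\eta_m\to0$ is used.

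You should make one point explicit. The claim that Gram--Schmidt is Lipschitz near the identity requires the Gram error $C\sqrt\delta$ to be below a dimensional threshold, and this fails when $\delta$ is close to $1$. Dispose of that range first, as the paper does: if $\delta>\delta_d$, take $I=\Id$ and note that $|Av-v|\le(2+\delta)R\le3\,\delta_d^{-1/2}\sqrt\delta\,R$.
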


\begin{proof}
Since $|A(v)|\le(1+\delta)|v|\le2R$ on $\clball_R(0)$, the conclusion is trivial (for any choice of $I$, say $I=\mathrm{Id}$, and $C_d$ large) unless $\delta\le\delta_d$ for a dimensional constant fixed at the end; assume this. First, for $|v|\le R$,
\begin{align*}
|A(v)+A(-v)|^2 &=2|A(v)|^2+2|A(-v)|^2-|A(v)-A(-v)|^2\\
&\le4(1+\delta)^2|v|^2-4|v|^2\\
&\le12\,\delta|v|^2,
\end{align*}
so $|A(v)+A(-v)|\le4\sqrt\delta\,R$. Next, for $|v|,|w|\le R$, noncontractivity gives $\<A(v),A(w)\>\le\tfrac12\bigl(|A(v)|^2+|A(w)|^2-|v-w|^2\bigr)\le\<v,w\>+3\delta R^2$, and, writing $A(w)=\bigl(A(w)+A(-w)\bigr)-A(-w)$,
\[
\<A(v),A(w)\>\ \ge\ -|A(v)|\,\bigl|A(w)+A(-w)\bigr|-\<A(v),A(-w)\> \ \ge\ \<v,w\>-12\sqrt\delta\,R^2 ,
\]
using the previous two estimates.
Hence $\bigl|\<A(v),A(w)\>-\<v,w\>\bigr|\le12\sqrt\delta\,R^2$ on $\clball_R(0)$. Let $(e_i)$ be an orthonormal basis and $u_i:=A(Re_i)/R$; the Gram matrix $G=(\<u_i,u_j\>)$ satisfies $\|G-\mathrm{Id}\|\le12d\sqrt\delta$, so for $\delta\le\delta_d$ small the L\"owdin orthonormalisation $\bar e_i:=\sum_k(G^{-1/2})_{ki}u_k$ is defined and $|\bar e_i-u_i|\le C'_d\sqrt\delta$. Let $I$ be the linear isometry with $I(e_i)=\bar e_i$. For $|v|\le R$, $v=\sum v_ie_i$,
\[
\<A(v),I(v)\>=\sum_iv_i\<A(v),u_i\>+\sum_iv_i\<A(v),\bar e_i-u_i\> \ \ge\ |v|^2-C''_d\sqrt\delta\,R^2 ,
\]
since $\<A(v),u_i\>=\<A(v),A(Re_i)\>/R\ge\<v,e_i\>-12\sqrt\delta R$ and $\sum_i|v_i|\le\sqrt d\,R$. Therefore
\[
|A(v)-I(v)|^2=|A(v)|^2-2\<A(v),I(v)\>+|v|^2\le3\delta R^2+2C''_d\sqrt\delta R^2 \le C_d^2\sqrt\delta\,R^2 . \qedhere
\]
\end{proof}

The hypothesis that $(X, d)$ is $\curv\ge0$ that we assume throughout this section is used and is necessary for the next Proposition \ref{prop:SVmidpoint}.

\begin{prop}[second variation for the midpoint-factorised
maps]\label{prop:SVmidpoint}
Let $s<t$ in $D_\varphi$, let $u:=\tfrac{s+t}2$ (a grid time, the grid being dyadic),
and set
\[
T_{s,t}\ :=\ (\sigma_{t\to u})^{-1}\circ\tau_{s\to u}\ :\ L_{\gamma(s)}\to
L_{\gamma(t)},
\]
a surjective linear isometry by Lemma~\ref{lem:limitisometry}. Then:
\begin{enumerate}
\item[(i)] $T_{s,t}$ satisfies $(\mathrm{SV}_\perp)$ for the pair $(s,t)$;
\item[(ii)] consequently
$\widehat T_{s,t}:=T_{s,t}\oplus(\dot\gamma(s)\mapsto\dot\gamma(t))$ satisfies the
model second-variation bound \emph{restricted to normal vectors}: there are
$\tilde p,\tilde q\in\mathbb E^\dd$ with $|\tilde p\tilde q|=\ell$ and linear isometries
$\iota_s:T_{\gamma(s)}X\to T_{\tilde p}\mathbb E^\dd$,
$\iota_t:T_{\gamma(t)}X\to T_{\tilde q}\mathbb E^\dd$ with
$\iota_t\circ\widehat T_{s,t}=\tilde\tau\circ\iota_s$, $\tilde\tau$ the Euclidean parallel translation, such that for all fixed $v\in L_{\gamma(s)}$,
$w\in L_{\gamma(t)}$,
for $\omega$-almost all $j$, 
\[
\bigl|\exp_{\gamma(s)}(\varepsilon_j v)\ \exp_{\gamma(t)}(\varepsilon_j w)\bigr|\ \le\
\bigl|\exp_{\tilde p}\,\iota_s(\varepsilon_j v)\ \exp_{\tilde q}\,\iota_t(\varepsilon_j w)\bigr|
+o(\varepsilon_j^2).
\]
\end{enumerate}
\end{prop}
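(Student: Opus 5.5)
The plan is to transcribe Petrunin's argument \cite[\S\S1.12--1.16]{petrunin1998parallel} at the midpoint $u$ and then show that the slack vanishes. Set $p=\gamma(s)$, $q=\gamma(t)$, $m=\gamma(u)$, with $|pm|=|mq|=\ell/2$. Fix $x\in L_p$, $y\in L_q$, and put $x_j=\exp_p(\varepsilon_jx)$, $y_j=\exp_q(\varepsilon_jy)$. The strategy has three steps.

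\textbf{Step 1: reduce to one intermediate point.} For any point $z_j$ near $m$ we have $|x_jy_j|\le|x_jz_j|+|z_jy_j|$. Take $z_j=\exp_m(\varepsilon_j\zeta)$ with $\zeta\in L_m$ to be chosen.

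\textbf{Step 2: first-order comparison along each half.} Along the half $[s,u]$, the definition of $\tau$ as an $\omega$-limit of compositions of one-step maps $\Pi$ should give
\[
|x_jz_j|\le\tfrac{\ell}{2}+\frac{|\tau_{s\to u}x-\zeta|^2}{\ell}\varepsilon_j^2+o(\varepsilon_j^2)
\]
for $\zeta$ in the image. The reason is that each one-step map is a rescaled $\log\circ\exp$, so by definition it realises, up to $o(\varepsilon_j)$, a nearest point of the form $\exp_g(\varepsilon_j\cdot)$. Concatenating the $N_m$ steps and using hinge comparison under $\curv\ge0$ then gives the Euclidean bound in the normal directions, since each step contributes no normal excess at first order. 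Lemma~\ref{lem:normgrowth} controls the accumulated error, and Lemma~\ref{lem:rigidity} lets us replace the level-$m$ composites by genuine isometries with error $O(N_m^{-1/4})$, which vanishes in the $\omega$-limit. The same argument on $[u,t]$ with the reverse family gives
\[
|z_jy_j|\le\tfrac{\ell}{2}+\frac{|\zeta-\sigma_{t\to u}y|^2}{\ell}\varepsilon_j^2+o(\varepsilon_j^2).
\]

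\textbf{Step 3: optimise over $\zeta$.} Choose $\zeta=\tfrac12(\tau_{s\to u}x+\sigma_{t\to u}y)$. The parallelogram identity in the Euclidean space $L_m\cong\R^{\dd-1}$ ($u$ is a good time) then yields
\[
\ell+\frac{|\tau_{s\to u}x-\sigma_{t\to u}y|^2}{2\ell}\varepsilon_j^2+o(\varepsilon_j^2).
\]
Since $\sigma_{t\to u}$ is an isometry, $|\tau_{s\to u}x-\sigma_{t\to u}y|=|T_{s,t}x-y|=|x-T_{s,t}^{-1}y|$, which is exactly $(\mathrm{SV}_\perp)$. The ``for $\omega$-almost all $j$'' bookkeeping is handled by intersecting finitely many sets in $\omega$.

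\textbf{Main obstacle.} The hard part is Step 2: establishing the half-segment inequality with the exact constant $1/\ell$ rather than only $(1+o(1))$. Here the volume/Brunn--Minkowski argument replacing Petrunin's isoperimetric step enters. The plan is to bound the $\varepsilon$-neighbourhood images of balls under the one-step maps: their noncontractivity, combined with Bishop--Gromov volume monotonicity under $\curv\ge0$ and Brunn--Minkowski in $\R^{\dd-1}$, forces the composite to be almost volume-preserving. This is what feeds the rigidity lemma, and it is also the step where $\curv\ge0$ is genuinely needed, to prevent the comparison from carrying a positive curvature correction.

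\textbf{Part (ii).} This is then formal. Place $\tilde p,\tilde q$ in $\mathbb E^\dd$ at distance $\ell$, and identify $T_pX$ with $T_{\tilde p}\mathbb E^\dd$ via $\iota_s$, sending $\dot\gamma(s)$ to $\tilde q-\tilde p$. Define $\iota_t:=\tilde\tau\circ\iota_s\circ\widehat T_{s,t}^{-1}$. For normal $v,w$ the Euclidean distance $|\exp_{\tilde p}\iota_s(\varepsilon_jv)\,\exp_{\tilde q}\iota_t(\varepsilon_jw)|$ expands as $\ell+|v-T_{s,t}^{-1}w|^2\varepsilon_j^2/(2\ell)+O(\varepsilon_j^4)$, so (i) gives (ii).
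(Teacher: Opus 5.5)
Your Step~2 is a genuine gap. The half-segment inequality you assert for arbitrary $\zeta\in L_{\gamma(u)}$ is exactly $(\mathrm{SV}_\perp)$ for $\tau_{s\to u}$ itself on the pair $(s,u)$, and for $\sigma_{t\to u}$ on $(t,u)$. Nothing in your justification uses that $u$ is the midpoint, so the same reasoning on $[s,t]$ would give $(\mathrm{SV}_\perp)$ for $\tau_{s\to t}$ directly and make the factorisation pointless. The paper states explicitly that its estimates give the bound for $T_{s,t}$ and \emph{not} for $\tau_{s\to t}$.

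The mechanism you sketch does not produce it. Concatenating one-step inequalities bounds the distance for chains $x_{i+1}=\Pi_i(x_i)+d_i$, with excess $\sum_i|d_i|^2/(2\delta_m)$. But the $\Pi_i$ are only noncontracting, so you cannot steer such a chain to a \emph{prescribed} endpoint $\zeta$ at the cost $|\zeta-\tau_{s\to u}x|^2/\ell$. Lemma~\ref{lem:rigidity}, which is fed by Lemma~\ref{lem:normgrowth} and not by any volume argument, approximates only the full composite $\tau^m_{s\to u}$ by an isometry. It says nothing about the short partial composites $\Pi_{N-1}\circ\cdots\circ\Pi_{i+1}$ that carry the late deviations $d_i$.

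The volume argument does not rescue this. Noncontractivity plus Brunn--Minkowski (with Kirszbraun; Bishop--Gromov plays no role) show only that the open set $\Sigma_m(x)$ of $\lambda$-chain endpoints has volume at least $\vol(B_{\lambda\ell/2})$ while lying in $\clball_{\lambda\ell/2+\eta_m}(\Upsilon_m(x))$. So it fills that ball up to a set of small measure. That never guarantees that a given point, such as $\tfrac12(\tau_{s\to u}x+\sigma_{t\to u}y)$, is reached.

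The missing idea is to run the construction from both ends and let the meeting point be \emph{unprescribed}. With $\hat y=T_{s,t}^{-1}y$ and rational $\lambda>|x-\hat y|/\ell$, you have $|\tau_{s\to u}x-\sigma_{t\to u}y|=|x-\hat y|<\lambda\ell$. Hence the two almost-full balls of radius about $\lambda\ell/2$, around approximations of these centres, overlap in a ball of fixed positive radius. If the open sets $\Sigma_m(x)$ and $\Sigma'_m(y)$ were disjoint, their total volume, at least $2\vol(B_{\lambda\ell/2})$, would exceed the volume of the union of the two slightly enlarged balls for large $m$ in an $\omega$-large set. Thus some $z'$ is the endpoint of both a forward $\lambda$-chain from $x$ and a reverse $\lambda$-chain from $y$.

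Each chain costs $\tfrac\ell2\bigl(1+\tfrac{\lambda^2}2\varepsilon_j^2\bigr)+o(\varepsilon_j^2)$ by the one-step inequality. The containment step needs a uniform norm bound along chains so that the rigidity lemma applies on a fixed ball. The triangle inequality through $\exp_{\gamma(u)}(\varepsilon_jz')$ then gives $\ell+\tfrac{\lambda^2\ell}{2}\varepsilon_j^2+o(\varepsilon_j^2)$, and you optimise over rational $\lambda$ outside the $\omega$-quantifier. This is exactly why the statement concerns $T_{s,t}=(\sigma_{t\to u})^{-1}\circ\tau_{s\to u}$ rather than $\tau$.

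Note also that $\curv\ge0$ enters in the one-step inequality (the $\exp/\log$ comparison and the concavity of $\rho\mapsto|g\,c(\rho)|^2-\rho^2$ along quasigeodesics), not in the volume step. Your part~(ii) is fine and matches the paper.
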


\begin{proof}
If $X$ has dimension $\dd=1$ the normal cones are trivial and there is nothing to prove; assume $\dd\ge2$
and write $d:=\dd-1$, so that all normal cones at grid times are Euclidean
$\cong\R^{d}$ (Lemma~\ref{lem:goodphase}). Write $p:=\gamma(s)$, $q:=\gamma(t)$,
$r:=\gamma(u)$; $r$ is the midpoint of the segment $pq$ of the ray. Fix
$x\in L_p$, $y\in L_q$, set $\hat y:=T_{s,t}^{-1}(y)\in L_p$, and fix a rational
$\lambda>|x-\hat y|/\ell$. We will prove
\begin{equation}\label{eq:SVlambda}
\bigl|\exp_p(\varepsilon_j x)\,\exp_q(\varepsilon_j y)\bigr|\ \le\
\ell\Bigl(1+\frac{\lambda^2}2\,\varepsilon_j^2\Bigr)+o(\varepsilon_j^2)
\qquad\text{for $\omega$-almost all }j ;
\end{equation}
since for every $\delta>0$ one may choose rational $\lambda$ with
$\lambda^2\ell/2\le|x-\hat y|^2/(2\ell)+\delta/2$, the family of statements
\eqref{eq:SVlambda} (one for each $\lambda$, the infimum over $\lambda$ being taken
\emph{outside} the $\omega$-quantifier) yields $(\mathrm{SV}_\perp)$.

For $m$ at least the level of $s,t$, let $p_i:=\gamma(s+ia2^{-m})$,
$0\le i\le N$, be the level-$m$ grid points subdividing $[s,u]$, so $p_0=p$,
$p_N=r$, $N=N_m:=(u-s)2^m/a$, with step length $\delta_m:=\ell/(2N)$; let
$q_i:=\gamma(t-ia2^{-m})$ be the points subdividing $[t,u]$ in the reverse direction,
$q_0=q$, $q_N=r$. Write $\Pi_i:L_{p_i}\to L_{p_{i+1}}$ for the level-$m$ one-step maps
and $\Pi'_i:L_{q_i}\to L_{q_{i+1}}$ for the reverse ones, so that
$\tau^m_{s\to u}=\Pi_{N-1}\circ\cdots\circ\Pi_0$ and
$\sigma^m_{t\to u}=\Pi'_{N-1}\circ\cdots\circ\Pi'_0$. All statements below are proved
for the forward chain and hold verbatim for the reverse chain, Petrunin's one-step
construction being symmetric in the two endpoints of a step.

\smallskip\noindent\emph{(a) One-step inequality} (transcription of
\cite[\S1.6]{petrunin1998parallel}). Let $f:=p_i$, $g:=p_{i+1}$, $\Pi:=\Pi_i$,
$\pi:=\mathrm{pr}_L\circ\log_g\circ\exp_f$. For all $x_*\in L_f$, $x'\in L_g$ and all
$\varepsilon>0$, \emph{before any limit is taken},
\begin{equation}\label{eq:onesteppre}
\bigl|\exp_f(\varepsilon x_*)\,\exp_g(\varepsilon x')\bigr|^2\ \le\
|fg|^2+\varepsilon^2|x_*|^2-|\pi(\varepsilon x_*)|^2
+\bigl|\pi(\varepsilon x_*)-\varepsilon x'\bigr|^2 .
\end{equation}
Indeed: $|\varepsilon x'\ \log_g\exp_f(\varepsilon x_*)|\ge
|\exp_g(\varepsilon x')\,\exp_f(\varepsilon x_*)|$ by the defining property of
$\exp,\log$ in nonnegative curvature ; in the splitting $T_gX=L_g\times\R\dot\gamma$ the squared distance from $\varepsilon x'\in
L_g$ to $w:=\log_g\exp_f(\varepsilon x_*)$ equals
$\bigl(|w|^2-|\mathrm{pr}_Lw|^2\bigr)+|\mathrm{pr}_Lw-\varepsilon x'|^2$; and
$|w|=|g\,\exp_f(\varepsilon x_*)|\le\sqrt{|fg|^2+\varepsilon^2|x_*|^2}$ because
$x_*\perp\dot\gamma$ and $\rho\mapsto|g\,c(\rho)|^2-\rho^2$ is concave along the
quasigeodesic $c(\rho)=\exp_f(\rho x_*/|x_*|)$ in $\curv\ge0$, with vanishing initial
derivative. Now take $\varepsilon=\varepsilon_j$: by Lemma~\ref{lem:onestep},
$\pi(\varepsilon_jx_*)/\varepsilon_j\to_\omega\Pi(x_*)$ in a compact ball, whence for the \emph{fixed} pair $(x_*,x')$, and for $\omega$-a.a. $j$,
\begin{equation}\label{eq:onestepomega}
\bigl|\exp_f(\varepsilon_j x_*)\,\exp_g(\varepsilon_j x')\bigr|^2\le
|fg|^2+\bigl(|x_*|^2-|\Pi(x_*)|^2+|\Pi(x_*)-x'|^2\bigr)\varepsilon_j^2
+o(\varepsilon_j^2).
\end{equation}
In particular, if $|\Pi(x_*)-x'|\le\lambda\delta_m$ then, using
$|\Pi(x_*)|\ge|x_*|$ (Lemma~\ref{lem:onestep}) and $\sqrt{1+c}\le1+c/2$,
\begin{equation}\label{eq:onestepball}
\bigl|\exp_f(\varepsilon_j x_*)\,\exp_g(\varepsilon_j x')\bigr|\ \le\
\delta_m\Bigl(1+\frac{\lambda^2}2\varepsilon_j^2\Bigr)+o(\varepsilon_j^2)
\quad\text{for $\omega$-a.a.\ }j.
\end{equation}

\smallskip\noindent\emph{(b) Chains.} Call a \emph{$\lambda$-chain from $x$} a finite
sequence $x_0=x$, $x_i\in L_{p_i}$, with $|x_{i+1}-\Pi_i(x_i)|\le\lambda\delta_m$ for
all $i$. A $\lambda$-chain consists of finitely many fixed vectors, so the finitely
many statements \eqref{eq:onestepball} along it may be combined: by the triangle
inequality through the points $\exp_{p_i}(\varepsilon_jx_i)$, every full $\lambda$-chain
$x_0=x,\dots,x_N=:x'\in L_r$ satisfies
\begin{equation}\label{eq:chainineq}
\bigl|\exp_p(\varepsilon_jx)\,\exp_r(\varepsilon_jx')\bigr|\ \le\
\frac\ell2\Bigl(1+\frac{\lambda^2}2\varepsilon_j^2\Bigr)+o(\varepsilon_j^2)
\qquad\text{for $\omega$-a.a.\ }j,
\end{equation}
the analogue of Inequality (\#\#\#) in \cite[\S1.13]{petrunin1998parallel}. Define the propagated
sets
\[
\Sigma_m(x):=\Phi_{N-1}\circ\cdots\circ\Phi_0(\{x\})\subset L_r,\qquad
\Phi_i:=B_{\lambda\delta_m}\circ\Pi_i ,
\]
$B_\rho(Y)$ denoting the open $\rho$-neighbourhood; every point of $\Sigma_m(x)$ is the
endpoint of a $\lambda$-chain from $x$, and $\Sigma_m(x)$ is open ($B_\rho(Y)$ is open
for any $Y$). Let $\Sigma'_m(y)\subset L_r$ be the set propagated from $y\in L_q$ along
the reverse chain.

\smallskip\noindent\emph{(c) Geometry of the propagated sets.} We prove three claims.

\emph{(c1) Norm bound.} There are $m_0$ and $R'=R'(|x|,\lambda,\ell)$ such that for all
$m\ge m_0$, every $\lambda$-chain from $x$ satisfies $\max_i|x_i|\le R'$. This is the
transcription of the first part of the Lemma in \cite[\S1.13a]{petrunin1998parallel},
which in turn follows \cite[\S1.8]{petrunin1998parallel}. Fix auxiliary points
$a^-:=\gamma(\alpha/2)$ and $a^+:=\gamma(\tfrac{\beta+a}2)$ on the ray, on either side
of $[\alpha,\beta]$, at distance $\ge\rho_0>0$ from all grid points; set
$C_0:=1/(2\rho_0)$. Let $(x_i)_{i\le k}$ be a $\lambda$-chain stopped at the index $k$
realising $M:=\max_i|x_i|$. Exactly as in (a), the quasigeodesic comparison gives the
\emph{pre-limit} endpoint bounds
$|a^-\,\exp_{p_0}(\varepsilon x_0)|\le|a^-p_0|+C_0|x_0|^2\varepsilon^2$ and
$|\exp_{p_k}(\varepsilon x_k)\,a^+|\le|p_ka^+|+C_0|x_k|^2\varepsilon^2$. Combining them
with \eqref{eq:onestepomega} along the chain via the triangle inequality, and using the
collinearity $|a^-p_0|+k\delta_m+|p_ka^+|=|a^-a^+|$, we obtain after subtracting
$|a^-a^+|$, dividing by $\varepsilon_j^2$ and letting $j\to\omega$: 
\[
0\ \le\ C_0\bigl(|x_0|^2+M^2\bigr)
+\frac1{2\delta_m}\sum_{i<k}\Bigl(|x_i|^2-|\Pi_i(x_i)|^2+\lambda^2\delta_m^2\Bigr).
\]
Writing $|x_i|^2-|\Pi_i(x_i)|^2
=\bigl(|x_i|^2-|x_{i+1}|^2\bigr)+\bigl(|x_{i+1}|^2-|\Pi_i(x_i)|^2\bigr)$, telescoping
the first summands, and estimating
$|x_{i+1}|^2-|\Pi_i(x_i)|^2\le2\lambda\delta_m|\Pi_i(x_i)|+\lambda^2\delta_m^2
\le2\lambda\delta_m(M+\lambda\delta_m)+\lambda^2\delta_m^2$ together with
$k\delta_m\le\ell/2$, this yields
\[
M^2\ \le\ 2\delta_mC_0\bigl(|x_0|^2+M^2\bigr)+|x_0|^2+\lambda\ell M
+2\lambda^2\ell\,\delta_m(1+\lambda) .
\]
For $\delta_m\le\delta_0(C_0)$ the quadratic inequality gives
$M\le C_1\bigl(1+\lambda\bigr)\bigl(1+\ell\bigr)\bigl(1+|x_0|\bigr)=:R'$, uniformly
over $\lambda$-chains; the bound for the reverse chain, with $|x|$ replaced by $|y|$,
defines $R''$. Set $\bar R:=\max(R',R'')$.

\emph{(c2) Containment.} For every $v\in L_p$ and every $w\in\Sigma_m(x)$,
noncontractivity of the $\Pi_i$ propagates the lower bound
\[
\bigl|\tau^m_{s\to u}(v)\ w\bigr|\ \ge\ |v-x|-N\lambda\delta_m=|v-x|-\lambda\ell/2 ,
\]
by induction along the chain ($|\Pi_i(v_i)\,w'|\ge|\Pi_i(v_i)\,\Pi_i(w_i)|
-\lambda\delta_m\ge|v_i-w_i|-\lambda\delta_m$ for $w'\in B_{\lambda\delta_m}(\Pi_i(w_i))$).
By Lemma~\ref{lem:normgrowth} the composite $\tau^m_{s\to u}$ is noncontracting, fixes
the apex and has norm $\le1+C/N_m$; Lemma~\ref{lem:rigidity} (with
$\delta=C/N_m\to0$, $R=\bar R$) provides linear isometries $\Upsilon_m$ of
$L_p\to L_r$ with
\[
\bigl|\tau^m_{s\to u}(v)-\Upsilon_m(v)\bigr|\le\eta_m:=C_d\,(C/N_m)^{1/4}\,\bar R
\ \xrightarrow[m\to\infty]{}\ 0\qquad(|v|\le\bar R).
\]
Given $w\in\Sigma_m(x)$, the norm bound (c1) gives $|w|\le\bar R$; applying the
displayed lower bound to $v:=\Upsilon_m^{-1}(w)$ (also of norm $\le\bar R$) and the
rigidity estimate to the same $v$ yields $|v-x|\le\lambda\ell/2+\eta_m$, i.e.
\[
\Sigma_m(x)\ \subset\ \clball_{\lambda\ell/2+\eta_m}\bigl(\Upsilon_m(x)\bigr).
\]
This replaces the Lemma of \cite[\S1.13a]{petrunin1998parallel}.
Likewise $\Sigma'_m(y)\subset\clball_{\lambda\ell/2+\eta'_m}(\Upsilon'_m(y))$ with isometries
$\Upsilon'_m\approx\sigma^m_{t\to u}$.

\emph{(c3) Volume.} $\vol\bigl(\Sigma_m(x)\bigr)\ge\vol\bigl(B_{\lambda\ell/2}\bigr)$,
where $\vol$ is Lebesgue measure on $L_r\cong\R^{d}$ and $B_\rho$ a Euclidean
$\rho$-ball. Indeed, denote by $\vol^*$ outer Lebesgue measure. If
$S\subset L_{p_i}\cong\R^d$ is bounded and nonempty then: (i)
$\vol^*\bigl(\Pi_i(S)\bigr)\ge\vol^*(S)$, because $\Pi_i$ is injective with
$1$-Lipschitz inverse on its image, which extends to a $1$-Lipschitz self-map of
$\R^d$ by Kirszbraun, and Lipschitz maps with constant $1$ do not increase outer
measure; (ii) $B_\rho(S)\supset\overline S+B^\circ_\rho$ (distance to a set equals
distance to its closure), so the Brunn--Minkowski inequality applied to the compact set
$\overline S$ and the open ball gives
$\vol\bigl(B_\rho(S)\bigr)^{1/d}\ge\vol^*(S)^{1/d}+\vol(B_\rho)^{1/d}$. Starting from
the singleton $\{x\}$ and applying (i)--(ii) along the $N$ steps,
$\vol(\Sigma_m(x))^{1/d}\ge N\lambda\delta_m\,\vol(B_1)^{1/d}
=\vol(B_{\lambda\ell/2})^{1/d}$. (This replaces the half-volume Claim of
\cite[\S1.15]{petrunin1998parallel}; no bi-Lipschitz neighbourhood $U_\delta$ is
needed, the cones being Euclidean.) The same bound holds for $\Sigma'_m(y)$.

\smallskip\noindent\emph{(d) The two propagated sets meet.} Let
$f:=\tau_{s\to u}$ and $g:=\sigma_{t\to u}$, so that $g\circ T_{s,t}=\tau_{s\to u}$ by
definition of $T_{s,t}$, whence
\[
g(y)=g\bigl(T_{s,t}\hat y\bigr)=f(\hat y),\qquad
\bigl|f(x)-f(\hat y)\bigr|=|x-\hat y|<\lambda\ell .
\]
Set $\rho:=\lambda\ell/2$ and $\zeta:=\bigl(\lambda\ell-|x-\hat y|\bigr)/2>0$. The
centres converge along $\omega$:
$|\Upsilon_m(x)-f(x)|\le\eta_m+|\tau^m_{s\to u}(x)-f(x)|\to_\omega0$ and
$|\Upsilon'_m(y)-g(y)|\to_\omega0$, so the set
\[
W:=\bigl\{m:\ |\Upsilon_m(x)-\Upsilon'_m(y)|\le2\rho-\zeta\ \text{and}\
\eta_m\vee\eta'_m\le\bar\eta_m\bigr\}\in\omega
\]
(for any fixed sequence $\bar\eta_m\downarrow0$ dominating $\eta_m,\eta'_m$, which
exists since $\eta_m,\eta'_m\to0$ deterministically). Suppose
$\Sigma_m(x)\cap\Sigma'_m(y)=\emptyset$ for some $m\in W$. Both sets are open, hence
measurable, so their volumes add; by (c2) and (c3),
\[
2\vol(B_\rho)\ \le\ \vol\Bigl(\clball_{\rho+\bar\eta_m}\bigl(\Upsilon_m(x)\bigr)\cup
\clball_{\rho+\bar\eta_m}\bigl(\Upsilon'_m(y)\bigr)\Bigr)
\ \le\ 2\vol(B_{\rho+\bar\eta_m})-\vol\bigl(B_{\zeta/2}\bigr),
\]
the last step because $\clball_{\rho+\bar\eta_m}(\Upsilon_m(x))\cap\clball_{\rho+\bar\eta_m}(\Upsilon'_m(y))$ contains the ball of radius $\rho+\bar\eta_m-\tfrac12|\Upsilon_m(x)-\Upsilon'_m(y)| \ge\zeta/2$ about the midpoint of the centres. Since $2\vol(B_{\rho+\bar\eta_m})-2\vol(B_\rho)\to0$ while $\vol(B_{\zeta/2})>0$ is fixed, this fails for all large $m$; as $W\in\omega$ is infinite, there is $m$ with
\[
\Sigma_m(x)\cap\Sigma'_m(y)\ \neq\ \emptyset .
\]

\smallskip\noindent\emph{(e) Conclusion.} Fix such an $m$ and
$z'\in\Sigma_m(x)\cap\Sigma'_m(y)$. The point $z'$ is the endpoint of a forward
$\lambda$-chain from $x$ and of a reverse $\lambda$-chain from $y$; by
\eqref{eq:chainineq} for each and the triangle inequality through
$\exp_r(\varepsilon_jz')$,
\[
\bigl|\exp_p(\varepsilon_jx)\,\exp_q(\varepsilon_jy)\bigr|\ \le\
\ell\Bigl(1+\frac{\lambda^2}2\varepsilon_j^2\Bigr)+o(\varepsilon_j^2)
\qquad\text{for $\omega$-a.a.\ }j,
\]
which is \eqref{eq:SVlambda}. This proves (i).

For (ii), choose $\tilde p,\tilde q\in\mathbb E^\dd$ with $|\tilde p\tilde q|=\ell$,
let $e_1$ be the unit vector from $\tilde p$ to $\tilde q$, let
$\iota_s:T_{\gamma(s)}X\to T_{\tilde p}\mathbb E^\dd$ be any linear isometry taking the
direction of $\dot\gamma(s)$ to $e_1$, and set
$\iota_t:=\tilde\tau\circ\iota_s\circ\widehat T_{s,t}^{\,-1}$, so that
$\iota_t\circ\widehat T_{s,t}=\tilde\tau\circ\iota_s$ and $\iota_t$ takes the direction
of $\dot\gamma(t)$ to $e_1$. For normal $v\in L_{\gamma(s)}$, $w\in L_{\gamma(t)}$ the
model distance is
$\bigl|\exp_{\tilde p}\iota_s(\varepsilon v)\ \exp_{\tilde q}\iota_t(\varepsilon w)\bigr|
=\sqrt{\ell^2+\varepsilon^2|v-T_{s,t}^{-1}w|^2}
\ge\ell+\frac{|v-T_{s,t}^{-1}w|^2}{2\ell}\varepsilon^2-O(\varepsilon^4)$, so
$(\mathrm{SV}_\perp)$ for $T_{s,t}$ is precisely the displayed model bound for
$\widehat T_{s,t}$ on normal vectors, with $\varepsilon=\varepsilon_j=1/j$
and the same ``$\omega$-almost all'' quantifier.
\end{proof}

\begin{lem}[$(\mathrm{SV}_\perp)$ composes along concatenations]\label{lem:SVcompose}
Let $s<u<t$ in $D_\varphi$, $\ell_1:=d(\gamma(s),\gamma(u))$,
$\ell_2:=d(\gamma(u),\gamma(t))$, $\ell=\ell_1+\ell_2$. If
$A:L_{\gamma(s)}\to L_{\gamma(u)}$ satisfies $(\mathrm{SV}_\perp)$ for $(s,u)$ and
$B:L_{\gamma(u)}\to L_{\gamma(t)}$ satisfies $(\mathrm{SV}_\perp)$ for $(u,t)$, then
$B\circ A$ satisfies $(\mathrm{SV}_\perp)$ for $(s,t)$.
\end{lem}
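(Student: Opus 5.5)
The plan is to use the triangle inequality through an intermediate point $\exp_{\gamma(u)}(\varepsilon_j z)$, with $z\in L_{\gamma(u)}$ chosen optimally as in the Euclidean model. Fix $x\in L_{\gamma(s)}$ and $y\in L_{\gamma(t)}$, and set $\hat y:=(B\circ A)^{-1}(y)=A^{-1}B^{-1}y$. In the flat model, the minimiser of
\[
\frac{|x-A^{-1}z|^2}{2\ell_1}+\frac{|z-y'|^2}{2\ell_2}
\]
over $z$ is not needed in closed form. Reading everything in $L_{\gamma(s)}$ via the isometry $A$, set $z=A(\zeta)$ with $\zeta:=\frac{\ell_2 x+\ell_1\hat y}{\ell}$, the point dividing the segment $[x,\hat y]$ in ratio $\ell_1:\ell_2$. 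Then
\[
|x-A^{-1}z|=|x-\zeta|=\frac{\ell_1}{\ell}|x-\hat y|,\qquad |z-B^{-1}y|=|A\zeta-A\hat y|=|\zeta-\hat y|=\frac{\ell_2}{\ell}|x-\hat y|,
\]
using that $B^{-1}y=A\hat y$ and that $A$ is a linear isometry. Since $z$ is a fixed vector (independent of $j$), both hypotheses apply.

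\begin{itemize}
\item $(\mathrm{SV}_\perp)$ for $(s,u)$ with the pair $(x,z)$ gives, for $\omega$-a.a.\ $j$, $|\exp_{\gamma(s)}(\varepsilon_jx)\,\exp_{\gamma(u)}(\varepsilon_jz)|\le\ell_1+\frac{\ell_1}{2\ell^2}|x-\hat y|^2\varepsilon_j^2+o(\varepsilon_j^2)$.
\item $(\mathrm{SV}_\perp)$ for $(u,t)$ with the pair $(z,y)$ gives $|\exp_{\gamma(u)}(\varepsilon_jz)\,\exp_{\gamma(t)}(\varepsilon_jy)|\le\ell_2+\frac{\ell_2}{2\ell^2}|x-\hat y|^2\varepsilon_j^2+o(\varepsilon_j^2)$.
\end{itemize}

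The only care needed is with the quantifier. With the convention stated before \eqref{SVperp}, for each $\delta>0$ the two sets of $j$ where the respective inequalities hold with error $\delta\varepsilon_j^2/2$ both belong to $\omega$. Their intersection therefore also belongs to $\omega$, being a finite intersection of ultrafilter sets. On that intersection the triangle inequality yields
\[
|\exp_{\gamma(s)}(\varepsilon_jx)\,\exp_{\gamma(t)}(\varepsilon_jy)|\le\ell+\frac{\ell_1+\ell_2}{2\ell^2}|x-\hat y|^2\varepsilon_j^2+\delta\varepsilon_j^2=\ell+\frac{|x-(BA)^{-1}y|^2}{2\ell}\varepsilon_j^2+\delta\varepsilon_j^2.
\]

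Two remaining points need checking, and I expect the first to be the main (mild) obstacle. First, $\ell=\ell_1+\ell_2$ must be the distance $d(\gamma(s),\gamma(t))$ appearing in \eqref{SVperp} for the pair $(s,t)$. This holds because $\gamma$ is a minimizing transport ray and $u$ lies between $s$ and $t$. Second, $B\circ A$ must be a surjective linear isometry $L_{\gamma(s)}\to L_{\gamma(t)}$, which is immediate since both $A$ and $B$ are. No limits in $m$ or compactness arguments are required, because the intermediate vector $z$ is fixed before the $\omega$-quantifier is applied.
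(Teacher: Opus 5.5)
Your proof is correct and follows the paper's argument: the same proportional split point $\zeta=\frac{\ell_2x+\ell_1\hat y}{\ell}$ pushed forward by $A$, the two applications of $(\mathrm{SV}_\perp)$ to the pairs $(x,A\zeta)$ and $(A\zeta,y)$ using $B^{-1}y=A\hat y$, the triangle inequality through $\exp_{\gamma(u)}(\varepsilon_jA\zeta)$, and the intersection of two $\omega$-large sets. Your explicit checks that $d(\gamma(s),\gamma(t))=\ell_1+\ell_2$ (by minimality of the ray) and your $\delta/2$ bookkeeping in the quantifier are details the paper leaves implicit.
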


\begin{proof}
Fix $x\in L_{\gamma(s)}$, $y\in L_{\gamma(t)}$, set $\hat y:=(B\circ A)^{-1}(y)$ and
let $z:=x+\frac{\ell_1}\ell(\hat y-x)$ be the proportional split of the segment
$[x,\hat y]$ in the Euclidean space $L_{\gamma(s)}$, so that
$|x-z|=\frac{\ell_1}\ell|x-\hat y|$ and $|z-\hat y|=\frac{\ell_2}\ell|x-\hat y|$.
Apply $(\mathrm{SV}_\perp)$ for $(s,u)$ to the pair $(x,A(z))$ and for $(u,t)$ to the
pair $(A(z),y)$; note $A^{-1}(A(z))=z$ and $B^{-1}(y)=A(\hat y)$, so
$|A(z)-B^{-1}(y)|=|z-\hat y|$. The triangle inequality through
$\exp_{\gamma(u)}(\varepsilon_jA(z))$ gives, for $\omega$-a.a.\ $j$ (intersection of
two $\omega$-large sets),
\begin{align*}
\bigl|\exp_{\gamma(s)}(\varepsilon_jx)\,\exp_{\gamma(t)}(\varepsilon_jy)\bigr|&\le
\ell+\frac{\varepsilon_j^2}2\Bigl[\frac{|x-z|^2}{\ell_1}+\frac{|z-\hat y|^2}{\ell_2}
\Bigr]+o(\varepsilon_j^2)\\
&=\ell+\frac{|x-\hat y|^2}{2\ell}\,\varepsilon_j^2+o(\varepsilon_j^2),
\end{align*}
the proportional split being the equality case of Cauchy--Schwarz:
$\frac{(\ell_1/\ell)^2}{\ell_1}+\frac{(\ell_2/\ell)^2}{\ell_2}=\frac1\ell$.
\end{proof}

We can now state the main result of this section.

\begin{prop}[the trivialisation: exact cocycle and $(\mathrm{SV}_\perp)$]\label{prop:cocycle}
For grid times $s<t$ in $D_\varphi$ and $m\ge\operatorname{level}(s,t)$, let
$s=s_0<s_1<\dots<s_K=t$ be the level-$m$ grid points subdividing $[s,t]$, and set
\[
W^m_{s\to t}\ :=\ T_{s_{K-1},s_K}\circ\cdots\circ T_{s_0,s_1},
\qquad
W_{s\to t}\ :=\ \omega\text{-}\lim_m\ W^m_{s\to t},
\]
where the $T_{\cdot,\cdot}$ are the midpoint-factorised maps of
Proposition~\ref{prop:SVmidpoint}. Then the family $\{W_{s\to t}\}_{s<t\in D_\varphi}$
consists of surjective linear isometries $L_{\gamma(s)}\to L_{\gamma(t)}$ which:
\begin{enumerate}
\item[(i)] satisfy the exact cocycle identity
\begin{equation}\label{eq:cocycle}
W_{r\to t}\circ W_{s\to r}=W_{s\to t},\qquad s<r<t\ \text{in }D_\varphi ;
\end{equation}
\item[(ii)] satisfy $(\mathrm{SV}_\perp)$ for every pair $(s,t)$;
\item[(iii)] extend, by
$\widehat W_{s\to t}:=W_{s\to t}\oplus\bigl(\dot\gamma(s)\mapsto\dot\gamma(t)\bigr)$
(transport rays have constant speed, so this is isometric), to splitting-preserving linear isometries $T_{\gamma(s)}X\to T_{\gamma(t)}X$ with the same cocycle property, satisfying the model second-variation bound for normal vectors: with $\tilde p,\tilde q,\iota_s,\iota_t$ as in part~(ii) of Proposition~\ref{prop:SVmidpoint}, now relative to $\widehat W_{s\to t}$, for $\omega$-almost all $j$: for all fixed $v\in L_{\gamma(s)}$, and $w\in L_{\gamma(t)}$,
\[
\bigl|\exp_{\gamma(s)}(\varepsilon_j v)\ \exp_{\gamma(t)}(\varepsilon_j w)\bigr|\ \le\
\bigl|\exp_{\tilde p}\,\iota_s(\varepsilon_j v)\ \exp_{\tilde q}\,\iota_t(\varepsilon_j w)\bigr|
+o(\varepsilon_j^2).
\]
\end{enumerate}
\end{prop}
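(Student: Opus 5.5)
The plan is to establish, in order: that the $W_{s\to t}$ are surjective linear isometries; the cocycle (i); $(\mathrm{SV}_\perp)$ (ii); and finally (iii).

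\emph{Step 1: isometries.} Each block $T_{s_i,s_{i+1}}$ is a surjective linear isometry between Euclidean cones $\cong\R^{\dd-1}$ (Proposition~\ref{prop:SVmidpoint}, Lemma~\ref{lem:limitisometry}). Hence $W^m_{s\to t}$ is a linear isometry $L_{\gamma(s)}\to L_{\gamma(t)}$ for every $m$. The set of such isometries is compact (a torsor over $O(\dd-1)$ once bases are fixed), so the $\omega$-limit $W_{s\to t}$ exists in operator norm and is again a surjective linear isometry. No subsequence is extracted, so the selection is coherent across all pairs.

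\emph{Step 2: cocycle.} For $s<r<t$ in $D_\varphi$ and $m\ge\operatorname{level}(s,r,t)$, the level-$m$ subdivision of $[s,t]$ is the concatenation of those of $[s,r]$ and $[r,t]$. By associativity, $W^m_{s\to t}=W^m_{r\to t}\circ W^m_{s\to r}$ holds exactly. Composition is jointly continuous on compact sets of linear isometries, so the identity passes to the $\omega$-limit. Alternatively, Lemma~\ref{lem:graphlimit} applies verbatim, since isometries are noncontracting. This gives \eqref{eq:cocycle}.

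\emph{Step 3: $(\mathrm{SV}_\perp)$.} This is the main obstacle. For fixed $m$, an induction on $K$ using Lemma~\ref{lem:SVcompose} shows that $W^m_{s\to t}$ satisfies $(\mathrm{SV}_\perp)$ for $(s,t)$. The lengths add because the grid points lie on the geodesic $\gamma$, so $\sum\ell_i=\ell$. The difficulty is passing to the limit in $m$, because the $o(\varepsilon_j^2)$ error of $W^m$ is not uniform in $m$.

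I would use the quantitative form, with the infimum over $\lambda$ taken outside the quantifier, as in \eqref{eq:SVlambda}. Fix $x,y$ and a rational $\lambda>|x-W_{s\to t}^{-1}y|/\ell$. The set of $m$ with $\|W^m_{s\to t}-W_{s\to t}\|<\bigl(\lambda\ell-|x-W_{s\to t}^{-1}y|\bigr)/(1+|y|)$ lies in $\omega$, so pick one such $m$. For this single fixed $m$ we then have $|x-(W^m_{s\to t})^{-1}y|<\lambda\ell$. The $(\mathrm{SV}_\perp)$ bound for $W^m_{s\to t}$ therefore gives, for $\omega$-almost all $j$,
\[
\bigl|\exp_{\gamma(s)}(\varepsilon_jx)\,\exp_{\gamma(t)}(\varepsilon_jy)\bigr|\le\ell\Bigl(1+\tfrac{\lambda^2}{2}\varepsilon_j^2\Bigr)+o(\varepsilon_j^2).
\]
This is \eqref{eq:SVlambda} with $W_{s\to t}$ in place of $T_{s,t}$. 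Letting $\lambda\downarrow|x-W_{s\to t}^{-1}y|/\ell$ along rationals, outside the $\omega$-quantifier as in the proof of Proposition~\ref{prop:SVmidpoint}, yields (ii).

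The key point is that the $\omega$-quantifier in $j$ and the choice of $m$ never need to interchange. We only ever use one finite level $m$ for each fixed $(x,y,\lambda)$.

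\emph{Step 4: the extension (iii).} Transport rays have constant speed, so $|\dot\gamma(s)|=|\dot\gamma(t)|$. Using the orthogonal splitting $T_{\gamma(\cdot)}X=\R\dot\gamma\oplus L$, the direct sum $\widehat W_{s\to t}$ is a splitting-preserving linear isometry. Its cocycle property follows from (i) together with the trivial cocycle $\dot\gamma(s)\mapsto\dot\gamma(r)\mapsto\dot\gamma(t)$ on the tangential line. The model bound on normal vectors is obtained exactly as in part (ii) of Proposition~\ref{prop:SVmidpoint}:
\begin{itemize}
\item choose $\iota_s$ sending $\dot\gamma(s)$ to $e_1$;
\item set $\iota_t:=\tilde\tau\circ\iota_s\circ\widehat W_{s\to t}^{-1}$;
\item observe that the model distance equals $\sqrt{\ell^2+\varepsilon^2|v-W_{s\to t}^{-1}w|^2}$, and compare it with (ii).
\end{itemize}
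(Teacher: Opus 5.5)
Your proposal is correct and follows the paper's proof essentially step for step. It uses compactness of the isometry group for the $\omega$-limit, and exact level-$m$ concatenation plus joint continuity of composition for the cocycle. For $(\mathrm{SV}_\perp)$ it iterates Lemma~\ref{lem:SVcompose} at a single level $m_*$, chosen for each $(x,y,\lambda)$ from an $\omega$-large set with $|x-(W^{m_*}_{s\to t})^{-1}y|<\lambda\ell$, and takes the infimum over rational $\lambda$ outside the $\omega$-quantifier; (iii) is the same verbatim repackaging. Your explicit operator-norm threshold, valid because $\|(W^m)^{-1}-W^{-1}\|=\|W^m-W\|$ for isometries, is only a quantitative version of the paper's appeal to continuity of inversion on $O(\dd-1)$.
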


\begin{proof}
Each $W^m_{s\to t}$ is a surjective linear isometry $L_{\gamma(s)}\to L_{\gamma(t)}$,
being a composition of such maps (Proposition~\ref{prop:SVmidpoint},
Lemma~\ref{lem:limitisometry}). Grid-time normal cones are Euclidean
(Lemma~\ref{lem:goodphase}); fixing orthonormal bases once and for all, regard each
$W^m_{s\to t}$ as an element of the compact group $O(\dd-1)$. The $\omega$-limit in $m$
then exists entrywise in $O(\dd-1)$, and $W_{s\to t}$ is again a surjective linear
isometry.

(i) At every finite level $m\ge\operatorname{level}(s,r,t)$ the identity
$W^m_{r\to t}\circ W^m_{s\to r}=W^m_{s\to t}$ holds \emph{exactly}: $r$ is a grid point
of level $\le m$, so the level-$m$ subdivision of $[s,t]$ is the concatenation of those
of $[s,r]$ and $[r,t]$, and both sides are the composition, in order, of the same
elementary blocks $T_{s_i,s_{i+1}}$. Matrix multiplication is jointly continuous, so
the identity passes to the $\omega$-limit in $m$.

(ii) Fix $m\ge\operatorname{level}(s,t)$. Each elementary block $T_{s_i,s_{i+1}}$
satisfies $(\mathrm{SV}_\perp)$ for its pair (Proposition~\ref{prop:SVmidpoint}(i));
Lemma~\ref{lem:SVcompose}, iterated over the $K-1$ concatenations, shows that
$W^m_{s\to t}$ satisfies $(\mathrm{SV}_\perp)$ for the pair $(s,t)$, \emph{for every
such $m$}. To pass to the $\omega$-limit in $m$, note that the level enters the bound
only through the constant $|x-(W^m_{s\to t})^{-1}(y)|^2$, which is continuous in the
transport. Precisely: fix $x\in L_{\gamma(s)}$, $y\in L_{\gamma(t)}$ and a rational
$\lambda>|x-W_{s\to t}^{-1}(y)|/\ell$. Inversion is continuous on $O(\dd-1)$, so
$(W^m_{s\to t})^{-1}(y)\to_\omega W_{s\to t}^{-1}(y)$, and the set
$\{m:\ |x-(W^m_{s\to t})^{-1}(y)|<\lambda\ell\}$ belongs to $\omega$; in particular it
is nonempty. Fix a \emph{single} $m_*$ in it. Then $(\mathrm{SV}_\perp)$ for
$W^{m_*}_{s\to t}$, applied to the fixed pair $(x,y)$, gives
\begin{align*}
\bigl|\exp_{\gamma(s)}(\varepsilon_j x)\,\exp_{\gamma(t)}(\varepsilon_j y)\bigr|\ &\le\
\ell+\frac{\bigl|x-(W^{m_*}_{s\to t})^{-1}(y)\bigr|^2}{2\ell}\,\varepsilon_j^2
+o(\varepsilon_j^2)\\
& \le\ \ell+\frac{\lambda^2\ell}{2}\,\varepsilon_j^2+o(\varepsilon_j^2)
\end{align*}
for $\omega$-almost all $j$. As in the proof of Proposition~\ref{prop:SVmidpoint}, the
infimum over rational $\lambda$ is taken \emph{outside} the $\omega$-quantifier in $j$
(and so is the choice of $m_*$, one for each $\lambda$); letting
$\lambda\downarrow|x-W_{s\to t}^{-1}(y)|/\ell$ along the rationals yields
$(\mathrm{SV}_\perp)$ for $W_{s\to t}$.

(iii) The extension is isometric and splitting-preserving by construction, and the
cocycle for $\widehat W$ follows from (i), the tangential component
$\dot\gamma(s)\mapsto\dot\gamma(t)$ composing trivially. The packaging of the
normal-vector model bound is verbatim that of part~(ii) of
Proposition~\ref{prop:SVmidpoint}, with $T_{s,t}$ replaced by $W_{s\to t}$.
\end{proof}

\begin{rem}[the full entropy tensor under Property (SV)]\label{rem:fullSV}
For completeness, we record the full-tangent form of the second-variation property, but leave it as an open question.\\
\medskip
\textbf{Open question.}
For every pair $s<t$ in $D_\varphi$ there are $\tilde p,\tilde q\in\mathbb E^\dd$ with
$|\tilde p\tilde q|=d(\gamma(s),\gamma(t))$ and linear isometries
$\iota_s:T_{\gamma(s)}X\to T_{\tilde p}\mathbb E^\dd$,
$\iota_t:T_{\gamma(t)}X\to T_{\tilde q}\mathbb E^\dd$, with
$\iota_t\circ\widehat W_{s\to t}=\tilde\tau\circ\iota_s$ where $\tilde\tau$ is the
Euclidean parallel translation, such that for all fixed $v\in T_{\gamma(s)}X$ and
$w\in T_{\gamma(t)}X$, and for $\omega$-almost all $n$, 
\[
\bigl|\exp_{\gamma(s)}(\tfrac1n v)\ \exp_{\gamma(t)}(\tfrac1n w)\bigr|\ \le\
\bigl|\exp_{\tilde p}\,\iota_s(\tfrac1n v)\ \exp_{\tilde q}\,\iota_t(\tfrac1n w)\bigr|
+o(n^{-2}).
\]
Proposition~\ref{prop:cocycle}(iii) is exactly the restriction of Property (SV) to normal data; what is missing is the normal-tangent mixed case. Property (SV) is the missing ingredient to upgrade Theorem \ref{thm:aleximpliesdisplacementconvex} to the matrix displacement convexity of the full entropy tensor.
\end{rem}

We close this subsection with the following elementary lemma that produces the canonical extension required by Definition~\ref{def:alexentropytensor}.

\begin{lem}[monotone extension]\label{lem:monextension}
Let $D\subset[\alpha,\beta]$ be dense and let $\hat U:D\to\mathrm{Sym}(\dd)$ be
$\preceq$-nonincreasing with finite values. Then
$\bar U_t:=\lim_{D\ni s\downarrow t}\hat U_s$ exists for every $t\in[\alpha,\beta)$ and
defines a right-continuous, $\preceq$-nonincreasing extension, locally bounded on
$(\alpha,\beta)$, with at most countably many discontinuities; $\bar U=\hat U$ on $D$
off a countable set, and $\bar U$ is the unique such regularisation.
\end{lem}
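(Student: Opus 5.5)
The plan is to reduce to the scalar case, entry by entry, through a countable family of linear functionals that are order-preserving. Fix a countable dense set $\{\eta_k\}$ of unit vectors in $\R^{\dd}$. For each $k$ the scalar function $f_k(s):=\langle\hat U_s\eta_k,\eta_k\rangle$ on $D$ is nonincreasing in the usual sense. The first step is to show it is finite-valued and locally bounded on $(\alpha,\beta)$: for $t\in(\alpha,\beta)$ pick $s_1<t<s_2$ in $D$. Monotonicity then gives
\[
\hat U_{s_2}\preceq\hat U_s\preceq\hat U_{s_1}\qquad\text{for } s\in D\cap[s_1,s_2].
\]
A matrix sandwiched between two fixed symmetric matrices has operator norm bounded by $\max(\|\hat U_{s_1}\|,\|\hat U_{s_2}\|)$. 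This gives local boundedness of $\hat U$ near $t$. The bound degrades only at $\beta$, where no $s_2$ exists, and at $\alpha$, where the right limit still exists but may be unbounded from above as $t\downarrow\alpha$; that is why boundedness is only claimed on the open interval.

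Next comes existence of $\bar U_t$. For $t\in[\alpha,\beta)$, the net $\hat U_s$ as $D\ni s\downarrow t$ is $\preceq$-nondecreasing as $s$ decreases. It is bounded above by $\hat U_{s_1}$ when $t>\alpha$; when $t=\alpha$ one only needs finiteness of the limit of each quadratic form, which is a monotone bounded-below net, bounded below by $\hat U_{s_2}$. Each quadratic form $\langle\hat U_s\xi,\xi\rangle$ converges as a monotone bounded real net. By polarization,
\[
\langle\hat U_s\xi,\zeta\rangle=\tfrac14\bigl(\langle\hat U_s(\xi+\zeta),\xi+\zeta\rangle-\langle\hat U_s(\xi-\zeta),\xi-\zeta\rangle\bigr),
\]
so every entry converges, and $\bar U_t$ is a well-defined symmetric matrix. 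Monotonicity of $\bar U$ passes to the limit because the Loewner cone is closed. Right-continuity follows from the standard diagonal argument: for $t<t'$ close, $\bar U_{t'}$ is squeezed between $\hat U_{s}$ for $s\in D$ just above $t'$ and $\bar U_t$.

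The discontinuity set comes next. $\bar U$ is discontinuous at $t$ iff some entry is, and by polarization iff some $f_\xi(t)=\langle\bar U_t\xi,\xi\rangle$ with $\xi$ in the finite set $\{e_i,\ e_i\pm e_j\}$ jumps. Each such $f_\xi$ is a real monotone function, hence has countably many jumps. A finite union of countable sets is countable. At every $s\in D$ where all these scalar functions are continuous, $\bar U_s=\lim_{s'\downarrow s}\hat U_{s'}$ equals $\hat U_s$, by the squeeze $\hat U_{s'}\preceq\hat U_s\preceq\hat U_{s''}$ with $s''<s<s'$ and the equality of left and right limits. So $\bar U=\hat U$ on $D$ off a countable set.

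The last step is uniqueness, and this is where I expect the only subtlety: fixing what "such regularisation" means. I will interpret it as any right-continuous nonincreasing $\tilde U$ on $[\alpha,\beta)$ agreeing with $\hat U$ on $D$ off a countable set. The set $D$ minus a countable set is still dense, since $D$ is dense and uncountability is not needed; in fact one needs density of $D\setminus C$. This can fail if $D$ itself is countable, as with the dyadic grid. So I would instead use monotonicity: for $s\in D$ and $s'<s<s''$ in $D$ one has
\[
\tilde U_{s''}\preceq\tilde U_s\preceq\tilde U_{s'},
\]
and right-continuity forces $\tilde U_t=\lim_{D\ni s\downarrow t}\tilde U_s$. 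I would therefore state uniqueness as uniqueness among right-continuous nonincreasing extensions agreeing with $\hat U$ on a dense subset of $D$. I would check whether agreement off a countable set suffices by comparing right limits through any sequence in the agreement set, and flag this as the point to adjust.
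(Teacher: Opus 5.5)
Your core argument is the paper's: monotone scalar forms $s\mapsto\langle\hat U_s\xi,\xi\rangle$, polarisation to define $\bar U_t$, the sandwich $\hat U_{s_2}\preceq\hat U_s\preceq\hat U_{s_1}$ for local boundedness, and countability of jumps via monotone real functions. The only difference there is cosmetic: you use the finitely many functionals $\xi\in\{e_i,e_i\pm e_j\}$, while the paper uses the single function $t\mapsto\Tr\bar U_t$, since a nonzero jump $\bar U_{t^-}-\bar U_t\succeq0$ has positive trace. Your squeeze showing $\bar U_s=\hat U_s$ at every continuity point $s\in D$ is correct. It also goes beyond the paper's proof, which never addresses the agreement clause or the uniqueness clause. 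Two steps, however, do not go through as written.

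\emph{The endpoint $t=\alpha$.} As $s\downarrow\alpha$ the net $\hat U_s$ is $\preceq$-nondecreasing. The lower bound $\hat U_{s_2}$ you invoke is therefore irrelevant: a finite limit needs an upper bound, and in general none exists. If $\alpha\notin D$, then $\hat U_s=(s-\alpha)^{-1}\Id$ is finite and nonincreasing on $D$, yet has no finite right limit at $\alpha$. Your remark that $\bar U$ ``may be unbounded as $t\downarrow\alpha$'' is in fact incompatible with the existence of $\bar U_\alpha$, since $\bar U_t\preceq\bar U_\alpha$ for $t>\alpha$. Existence at $\alpha$ holds only if $\alpha\in D$ or under an a priori upper bound near $\alpha$. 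The paper's ``bounded above by its value at any grid point $<t$'' has the same hole at $t=\alpha$. This is harmless in the application, where only $(\alpha,\beta)$ is used, but the lemma should be stated on $(\alpha,\beta)$.

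\emph{Uniqueness.} You are right that for countable $D$ (the dyadic grid actually used) the condition ``$\tilde U=\hat U$ on $D$ off a countable set'' is vacuous, so uniqueness in that literal sense is false. But your replacement class, right-continuous nonincreasing $\tilde U$ agreeing with $\hat U$ on a dense subset of $D$, may not contain $\bar U$. Enumerate $D=\{q_n\}$ and set $\hat U_s:=-\bigl(\sum_{q_n<s}2^{-n}\bigr)\Id$. Then $\bar U_t=-\bigl(\sum_{q_n\le t}2^{-n}\bigr)\Id$, so $\bar U_{q_k}=\hat U_{q_k}-2^{-k}\Id$ at \emph{every} point of $D$. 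The formulation that works is the sandwich
\[
\tilde U_t\ \preceq\ \hat U_t\ \preceq\ \tilde U_{t^-}\qquad(t\in D\cap(\alpha,\beta)).
\]
$\bar U$ satisfies it:
\begin{itemize}
\item $\hat U_s\preceq\hat U_t$ for $s>t$ gives $\bar U_t\preceq\hat U_t$;
\item $\hat U_s\succeq\hat U_t$ for $s\in D\cap(r,t)$ gives $\bar U_r\succeq\hat U_t$ for every $r<t$, hence $\bar U_{t^-}\succeq\hat U_t$.
\end{itemize}
Conversely, let $\tilde U$ be right-continuous, nonincreasing, and satisfy the sandwich. As $D\ni s\downarrow t$, both $\tilde U_s$ and $\tilde U_{s^-}$ tend to $\tilde U_t$; the latter because it lies between $\tilde U_s$ and $\tilde U_r$ for $t<r<s$. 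So the sandwiched $\hat U_s$ tends to $\tilde U_t$, i.e.\ $\tilde U_t=\bar U_t$. With this formulation of uniqueness and the interval $(\alpha,\beta)$, your proof is complete.
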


\begin{proof}
For $\xi\in\R^\dd$ the function $s\mapsto\langle\hat U_s\xi,\xi\rangle$ is nonincreasing
on $D$; as $s\downarrow t$ it is nondecreasing and bounded above by its value at any
grid point $<t$, hence converges; polarisation defines $\bar U_t$. Monotonicity and
right-continuity are inherited, and local boundedness follows by sandwiching between two
grid values ($A\preceq B\preceq C$ bounds $\|B\|$ by $\max(\|A\|,\|C\|)$). At a
discontinuity the jump $\bar U_{t^-}-\bar U_{t^+}\succeq0$ is nonzero, so its trace, which is a monotone real function of $t$, jumps; hence countability.
\end{proof}

\subsection{Proof of Theorem \ref{thm:aleximpliesdisplacementconvex}}\label{sec:proofforward}

The proof follows the strategy of Petrunin \cite[Proposition 2.2]{petrunin2010alexandrov}, with the essential difference that we never take the trace: all inequalities are kept at the level of quadratic forms on the tangent spaces.

\medskip
\noindent\textbf{Step 0.}
Let $(X,d)$ be an $n$-dimensional Alexandrov space with $\curv\ge0$, and let $\mu_H$ be its $n$-dimensional Hausdorff measure. Let $\mu_0$ and $\mu_1$ be compactly supported probability measures absolutely continuous with respect to the Hausdorff measure.
Let $\theta:X\to\R$ be the Kantorovich potential of \cite[Theorem~1.1]{bertrand2008existence}
for the cost $\tfrac12 d^2$; it is $c$-concave, hence locally semiconcave. Its Hamilton--Jacobi
shifts $\theta_t=H_t\theta$ are given by \eqref{def:theta}. 

By \cite[Theorem~1.1]{bertrand2008existence} the optimal map is $x\mapsto\exp_x(-\nabla\theta(x))$
and the transport rays $\gamma(s)=\exp_x\!\big(-s\,\nabla\theta(x)\big)$ are minimizing geodesics;
moreover $\gamma$ is a $\theta_s$-gradient curve in the sense of
\cite[\S1]{petrunin2010alexandrov}. 
For $\mu_H$-a.e.\ starting point $x$ and a.e.\ parameter
$s$ the point $\gamma(s)$ is regular and lies in $\operatorname{Reg}\theta_s$; at such $s$ we set,
as in Definition~\ref{def:alexentropytensor},
\[
U_s:=\hess_{\gamma(s)}\theta_s ,
\]
a bilinear form on $T_{\gamma(s)}X\cong\R^n$, defined for a.e.\ $s$.
Finally we fix the trivialisation of $T_{\gamma(\cdot)}X$ \emph{along $\gamma$}. We use the construction of Section~\ref{ss:selection}: we fix a compact subinterval $[\alpha,\beta]\subset(0,a)$, a good phase $\varphi$ (Lemma~\ref{lem:goodphase}), and the family
$$
\widehat W_{s\to t}:T_{\gamma(s)}X\to T_{\gamma(t)}X,\quad s<t\in D_\varphi,
$$
of Proposition~\ref{prop:cocycle}: splitting-preserving linear isometries satisfying the
exact cocycle identity \eqref{eq:cocycle} and the normal second-variation bound
$(\mathrm{SV}_\perp)$. All matrix-valued quantities are first formed at grid times: in
a $\widehat W$-parallel orthonormal frame based at the leftmost grid point and adapted
to the splitting (first vector $\dot\gamma/|\dot\gamma|$), each $U_t$ with
$t\in D_\varphi$ yields the tangential scalar
$c_t$ and the normal block $V_t\in\mathrm{Sym}(\dd-1)$ of Definition~\ref{def:alexentropytensor}, assembled into the block-diagonal matrix
$\hat U_t:=\operatorname{diag}(c_t,V_t)\in\mathrm{Sym}(\dd)$; the mixed entries of
$U_t$ are never formed. The passage from grid times to a.e.\ $t$ (needed for
$\ET_t$ to be defined at all) is \emph{not} an additional regularity assumption: it
is supplied by the monotonicity of $\hat U$ on the grid, established in Step~4, together with the canonical extension of
Lemma~\ref{lem:monextension}; we denote by $\bar U$ the resulting right-continuous
(block-diagonal) extension.

Finally, note that with $\ET_t=-\int_0^t\bar U_s\,ds$ as in
Definition~\ref{def:alexentropytensor}, $\dot\ET_t=-\bar U_t$ and
$\ddot\ET_t=-\dot{\bar U}_t$, where $\dot{\bar U}_t=\tfrac{\der}{dt}\bar U_t$ is the
derivative in the fixed $\widehat W$-frame, understood distributionally in $t$. Hence the
matrix displacement convexity $\ddot\ET_t^{\mu_0\to\mu_1}\succeq(\dot\ET_t^{\mu_0\to\mu_1})^2$
is equivalent to
\begin{equation}\label{eq:riccatigoal}
\dot{\bar U}_t\ \preceq\ -\,\bar U_t^2
\end{equation}
in the sense of quadratic forms, understood distributionally along $\gamma$;
$\bar U_t$ being block-diagonal, \eqref{eq:riccatigoal} is exactly the pair of decoupled
inequalities of Definition~\ref{def:alexentropytensor}, and this is what we will prove.

\medskip
\noindent\textbf{Step 1 (Hamilton--Jacobi semigroup).}
For all $0<t_0<t_1$ and all $x\in X$,
\begin{equation}\label{eq:HJsemigroup}
\theta_{t_1}(x) = \underset{y\in X}{\inf}\, \left\{ \theta_{t_0}(y) + \frac{1}{2 ( t_1 - t_0)}\,d(x,y)^2 \right\}.
\end{equation}
Indeed the inequality ``$\le$'' is the triangle inequality applied to the definition
\eqref{def:theta}, and equality holds because $X$ is a length space
\cite[\S2, eq.~(2)]{petrunin2010alexandrov}.

\medskip
\noindent\textbf{Step 2 (the ray realizes the infimum).}
Fix interior parameters $0<t_0<t_1$ and set $q=\gamma(t_0)$, $p=\gamma(t_1)$. Since $\gamma$ is the
$\theta_s$-gradient ray of the optimal transport, the point $q$ is the minimizer in
\eqref{eq:HJsemigroup} at $x=p$; equivalently
\begin{equation}\label{eq:G0}
\theta_{t_1}(p)=\theta_{t_0}(q)+\frac{1}{2(t_1-t_0)}\,d(p,q)^2 .
\end{equation}

\medskip
\noindent\textbf{Step 3 (Hessian comparison, blockwise).}
We prove that, for every grid pair $t_0<t_1$ in $D_\varphi$, so that
$q=\gamma(t_0)\in\operatorname{Reg}\theta_{t_0}$ and
$p=\gamma(t_1)\in\operatorname{Reg}\theta_{t_1}$ automatically, by
Lemma~\ref{lem:goodphase}, it holds
\begin{equation}\label{eq:Hessiancompar}
\hess_{\gamma(t_1)} \theta_{t_1}(u,u) \leq  \hess_{\gamma(t_0)} \theta_{t_0}(v,v) + \frac{\left|u-W(v) \right|^2}{t_1-t_0}
\end{equation}
with $W=\widehat W_{t_0\to t_1}$, for all pairs of vectors that are \emph{either both
normal} ($u\in L_{\gamma(t_1)}$, $v\in L_{\gamma(t_0)}$) \emph{or both tangential}
($u\parallel\dot\gamma(t_1)$, $v\parallel\dot\gamma(t_0)$). 

In both cases the starting point is the same. For $u\in T_pX$, $v\in T_qX$ and small
$\varepsilon\in\R$, set $x_\varepsilon=\exp_p(\varepsilon u)$,
$y_\varepsilon=\exp_q(\varepsilon v)$, where $\exp_p(\varepsilon u)$ denotes the point
reached at time $|\varepsilon|$ along the geodesic issued from $p$ in the direction of
$\operatorname{sgn}(\varepsilon)\,u$, and define
\[
G(\varepsilon) := \theta_{t_1}(x_\varepsilon) - \theta_{t_0}(y_\varepsilon) - \frac{1}{2 ( t_1 - t_0)}\,d(x_\varepsilon,y_\varepsilon)^2.
\]
By \eqref{eq:HJsemigroup}, $G(\varepsilon)\le0$ for every $\varepsilon$, while $G(0)=0$ by
\eqref{eq:G0}. The two potential terms are twice differentiable at $\varepsilon=0$ in the sense of \eqref{eq:hessdefprelim}
(applied at $p$ and at $q$, both regular), so we get
\begin{align}
\theta_{t_1}(x_\varepsilon)&=\theta_{t_1}(p)+\varepsilon\,d_p\theta_{t_1}(u)+\tfrac{\varepsilon^2}2\,\hess_p\theta_{t_1}(u,u)+o(\varepsilon^2),\label{eq:taylorp}\\
\theta_{t_0}(y_\varepsilon)&=\theta_{t_0}(q)+\varepsilon\,d_q\theta_{t_0}(v)+\tfrac{\varepsilon^2}2\,\hess_q\theta_{t_0}(v,v)+o(\varepsilon^2).\label{eq:taylorq}
\end{align}
The squared-distance term, however, is only semiconcave in $\varepsilon$ and need
\emph{not} be twice differentiable at $\varepsilon=0$. The two cases now diverge: in
the tangential case the squared distance is computed \emph{exactly} and $G$ itself
serves as the test function; in the normal case we bound the squared distance from
above by the Euclidean model of Proposition~\ref{prop:cocycle}(iii) and argue by a
barrier, paying attention to the fact that the model bound is available only along the
\emph{positive} scale sequence $\varepsilon_j=1/j$, for $\omega$-almost all $j$.

\smallskip
\emph{Tangential case.} Let $u=\lambda\,\dot\gamma(t_1)/|\dot\gamma|$,
$v=\mu\,\dot\gamma(t_0)/|\dot\gamma|$ with $\lambda,\mu\in\R$. CBB spaces are
non-branching, so the geodesic issued from $p$ in the direction $\pm\dot\gamma(t_1)$ is
the ray itself: $x_\varepsilon=\gamma(t_1+\varepsilon\lambda/|\dot\gamma|)$ and
$y_\varepsilon=\gamma(t_0+\varepsilon\mu/|\dot\gamma|)$, defined two-sidedly for small
$\varepsilon$ since $t_0,t_1$ are interior to $(0,a)$. Both points lie on the
minimizing geodesic $\gamma$, so
\[
d(x_\varepsilon,y_\varepsilon)\ =\ d(p,q)+\varepsilon(\lambda-\mu)
\qquad\text{for all small }\varepsilon .
\]
Hence $G$ is twice differentiable at $\varepsilon=0$ by
\eqref{eq:taylorp}--\eqref{eq:taylorq}, and $\varepsilon=0$ is an interior two-sided
maximum: $G'(0)=0$ and $G''(0)\le0$, the latter reading
\[
\hess_p\theta_{t_1}(u,u)\ \le\ \hess_q\theta_{t_0}(v,v)+\frac{(\lambda-\mu)^2}{t_1-t_0}\,,
\]
which is \eqref{eq:Hessiancompar}: $W$ maps $\dot\gamma(t_0)/|\dot\gamma|$ to
$\dot\gamma(t_1)/|\dot\gamma|$, so $|u-W(v)|=|\lambda-\mu|$. 

\smallskip
\emph{Normal case.} Let $u\in L_{\gamma(t_1)}$, $v\in L_{\gamma(t_0)}$. First, both
linear terms in \eqref{eq:taylorp}--\eqref{eq:taylorq} vanish identically: along the
transport ray $\nabla\theta_{t_1}(p)=-\dot\gamma(t_1)$ and
$\nabla\theta_{t_0}(q)=-\dot\gamma(t_0)$ (Section~\ref{sec:prelim}), the differentials
at the regular points $p,q$ are linear, and $u\perp\dot\gamma(t_1)$,
$v\perp\dot\gamma(t_0)$ give
\begin{equation}\label{eq:linvanish}
d_p\theta_{t_1}(u)=\langle-\dot\gamma(t_1),u\rangle=0,
\qquad
d_q\theta_{t_0}(v)=\langle-\dot\gamma(t_0),v\rangle=0 .
\end{equation}
Next, Proposition~\ref{prop:cocycle}(iii) provides reference points
$\tilde p,\tilde q\in\R^\dd$ with $|\tilde p-\tilde q|=d(p,q)$ and linear isometries
$\iota_p:T_pX\to\R^\dd$, $\iota_q:T_qX\to\R^\dd$ inducing $W$ (i.e.\
$\iota_p\circ W=\iota_q$, the intertwining relation read in linear parts) and taking
the ray directions to the direction of $\tilde q-\tilde p$, such that, with
$\varepsilon=\varepsilon_j$,
\[
d(x_\varepsilon,y_\varepsilon)\ \le\ \big|(\tilde p+\varepsilon\,\iota_p u)-(\tilde q+\varepsilon\,\iota_q v)\big|+o(\varepsilon^2)
\qquad\text{for $\omega$-almost all }j .
\]
Squaring (both sides equal $d(p,q)+O(\varepsilon)$, so the error remains $o(\varepsilon^2)$) and
using $\iota_p u-\iota_q v=\iota_p(u-W v)$ together with the fact that $\iota_p$ is an isometry,
\begin{equation}\label{eq:secondvarmodel}
d(x_\varepsilon,y_\varepsilon)^2\ \le\ d(p,q)^2+2\varepsilon\,\big\langle\tilde p-\tilde q,\ \iota_p(u-W v)\big\rangle+\varepsilon^2\,|u-W v|^2+o(\varepsilon^2),
\end{equation}
where the linear term \emph{also} vanishes identically: $u-Wv$ is normal ($W$
preserves the splitting), so $\iota_p(u-Wv)\perp\tilde q-\tilde p$, i.e.
\begin{equation}\label{eq:modellinvanish}
\big\langle\tilde p-\tilde q,\ \iota_p(u-W v)\big\rangle=0 .
\end{equation}
Now introduce the \emph{barrier}
\[
\widetilde G(\varepsilon):=\theta_{t_1}(x_\varepsilon)-\theta_{t_0}(y_\varepsilon)
-\frac{1}{2(t_1-t_0)}\Big[d(p,q)^2+\varepsilon^2|u-W v|^2\Big].
\]
Because the coefficient $\tfrac1{2(t_1-t_0)}$ is positive and
\eqref{eq:secondvarmodel}--\eqref{eq:modellinvanish} bound the bracket from below by
$d(x_{\varepsilon_j},y_{\varepsilon_j})^2-o(\varepsilon_j^2)$, we have
\[
\widetilde G(\varepsilon_j)\ \le\ G(\varepsilon_j)+o(\varepsilon_j^2)\ \le\ o(\varepsilon_j^2)
\qquad\text{for $\omega$-almost all }j ,
\]
while $\widetilde G(0)=G(0)=0$. On the other hand, by
\eqref{eq:taylorp}--\eqref{eq:taylorq} and \eqref{eq:linvanish} the barrier admits the
second-order expansion \emph{with no linear term},
\[
\widetilde G(\varepsilon)=\tfrac12\,\widetilde G''(0)\,\varepsilon^2+o(\varepsilon^2),
\]
\[
\widetilde G''(0)=\hess_p\theta_{t_1}(u,u)-\hess_q\theta_{t_0}(v,v)-\frac{|u-W v|^2}{t_1-t_0}.
\]
Combining the two displays: for every $\delta>0$ the set of indices $j$ with
$\tfrac12\widetilde G''(0)\,\varepsilon_j^2\le\delta\,\varepsilon_j^2$ belongs to
$\omega$, in particular is nonempty, whence $\widetilde G''(0)\le2\delta$; letting
$\delta\downarrow0$ gives $\widetilde G''(0)\le0$, which is \eqref{eq:Hessiancompar}.

\medskip
\noindent\textbf{Step 4 (monotonicity, regularisation, and conclusion).}
Read all the retained Hessian
data, $t\in D_\varphi$, in the common $\widehat W$-parallel splitting-adapted
orthonormal frame based at the leftmost grid point (Step~0); they become the
block-diagonal matrices $\hat U_t=\operatorname{diag}(c_t,V_t)$, and for any grid pair
$t_0<t_1$ the transport $W=\widehat W_{t_0\to t_1}$ is the identity of $\R^n$ (it
preserves the splitting, so the frame remains adapted along the ray). By
\eqref{eq:cocycle} this is consistent across all grid pairs simultaneously. 

Reading \eqref{eq:Hessiancompar} in this frame (so $W=\Id$) and writing
$\Delta:=t_1-t_0>0$, it states
\begin{equation}\label{eq:Hessallframe}
\langle \hat U_{t_1}u,u\rangle\ \le\ \langle \hat U_{t_0}v,v\rangle+\frac1\Delta|u-v|^2
\qquad(\forall\,u,v\in E),
\end{equation}
for every grid pair $t_0<t_1$ in $D_\varphi$, where $E$ denotes either the tangential
line $\R e_1$ or the normal subspace $e_1^\perp\cong\R^{n-1}$. We draw two consequences, blockwise. First, for fixed $u\in E$, since its quadratic part in $v$ is $\langle(\hat U_{t_0}+\tfrac1\Delta\Id)v,v\rangle|_E$, this forces
$\Id+\Delta\hat U_{t_0}\succeq0$ on each of the two blocks, hence as a matrix, $\hat
U_{t_0}$ being block-diagonal. Applying this with a strictly larger grid time
$t_1'\in(t_1,\beta)\cap D_\varphi$ gives $\Id+(t_1'-t_0)\hat U_{t_0}\succeq0$, whence
\[
\Id+\Delta\,\hat U_{t_0}\ \succeq\ \Bigl(1-\tfrac{\Delta}{\,t_1'-t_0\,}\Bigr)\Id\ \succ\ 0
\]
for every grid pair with $t_1<\beta$. Second, minimising the right-hand side of
\eqref{eq:Hessallframe} over $v\in E$ is an inf-convolution of quadratic forms: by the following elementary identity, valid on a Euclidean space $E$ for a symmetric matrix $B$ with $\Id_E+\Delta B\succ0$,
\[
\inf_{v\in E}\Bigl[\langle Bv,v\rangle+\tfrac1\Delta|u-v|^2\Bigr]
=\bigl\langle B(\Id_E+\Delta B)^{-1}u,\,u\bigr\rangle,
\]
applied once with $B=c_{t_0}$ on $E=\R e_1$ and once with $B=V_{t_0}$ on
$E=e_1^\perp$, we obtain the two blockwise bounds
$c_{t_1}\le c_{t_0}(1+\Delta c_{t_0})^{-1}$ and
$V_{t_1}\preceq V_{t_0}(\Id+\Delta V_{t_0})^{-1}$, which assemble (the map
$B\mapsto B(\Id+\Delta B)^{-1}$ acting block-by-block on block-diagonal matrices) into
the \emph{operator} bound
\begin{equation}\label{eq:matrixriccatiforward}
\hat U_{t_1}\ \preceq\ \hat U_{t_0}\bigl(\Id+(t_1-t_0)\hat U_{t_0}\bigr)^{-1}
\ \preceq\ \hat U_{t_0},
\qquad t_0<t_1<\beta\ \text{ in }D_\varphi ,
\end{equation}
the second inequality because
$B(\Id+\Delta B)^{-1}=B-\Delta B^2(\Id+\Delta B)^{-1}\preceq B$ when
$\Id+\Delta B\succ0$. In particular $t\mapsto\hat U_t$ is
$\preceq$-\emph{nonincreasing on the grid}, and Lemma~\ref{lem:monextension} yields its
right-continuous, $\preceq$-nonincreasing, locally bounded extension $\bar U$ to
$[\alpha,\beta)$, with at most countably many discontinuities --- the extension mentioned
in Step~0, which renders $\ET_t=-\int\bar U$ well defined. Approximating any two
continuity points $t_0<t_1$ of $\bar U$ from the right by grid points, and using the
continuity of $B\mapsto B(\Id+\Delta B)^{-1}$ on $\{\Id+\Delta B\succ0\}$ (locally
uniformly in $\Delta$), the comparison \eqref{eq:matrixriccatiforward} extends off the grid: for all but countably many $t_0<t_1$\ in $(\alpha,\beta)$,
\begin{equation}\label{eq:matrixriccatibar}
\bar U_{t_1}\ \preceq\ \bar U_{t_0}\bigl(\Id+(t_1-t_0)\bar U_{t_0}\bigr)^{-1}.
\end{equation}

It remains to pass from the finite comparison \eqref{eq:matrixriccatibar} to the
differential inequality \eqref{eq:riccatigoal}. For a fixed vector $\xi\in\R^n$,
expanding the right-hand side of \eqref{eq:matrixriccatibar} to first order in $\Delta$
\[
\bar U_{t_0}\bigl(\Id+\Delta \bar U_{t_0}\bigr)^{-1}=\bar U_{t_0}-\Delta\,\bar U_{t_0}^2+O(\Delta^2),
\]
gives $\langle \bar U_{t_1}\xi,\xi\rangle-\langle \bar U_{t_0}\xi,\xi\rangle\le-\Delta\,|\bar U_{t_0}\xi|^2
+O(\Delta^2)$. Dividing by $\Delta$ and integrating against an arbitrary $0\le\chi\in
C_c^\infty(\alpha,\beta)$ yields
\[
\int_\alpha^{\beta}\Bigl(\langle \bar U_s\xi,\xi\rangle\,\chi'(s)-|\bar U_s\xi|^2\,\chi(s)\Bigr)\,ds\ \ge\ 0 ,
\]
i.e.\ $\tfrac{d}{ds}\langle \bar U_s\xi,\xi\rangle\le-|\bar U_s\xi|^2=-\langle \bar U_s^2\xi,\xi\rangle$
distributionally on $(\alpha,\beta)$. As $\xi\in\R^n$ was arbitrary, and as
$[\alpha,\beta]\subset(0,a)$ was an arbitrary compact subinterval, this is exactly
\eqref{eq:riccatigoal}:
\[
\dot{\bar U}_t\ \preceq\ -\,\bar U_t^2
\]
in the Loewner order, distributionally along $\gamma$, which proves the theorem.
\qed

\begin{rem}[consistency of the regularisation]\label{rem:consistency}
Note that the extension $\bar U=\operatorname{diag}(\bar c,\bar V)$ agrees with the intrinsic
blockwise Hessian data wherever the latter are defined, in the following
gauge-invariant sense: for a.e.\ $t$,
$\bar c_t=\hess_{\gamma(t)}\theta_t(\dot\gamma,\dot\gamma)/|\dot\gamma|^2$ and the
ordered eigenvalues of $\bar V_t$ coincide with those of the normal block
$\hess_{\gamma(t)}\theta_t|_{L_{\gamma(t)}\times L_{\gamma(t)}}$ with respect to
$\met_{\gamma(t)}$. 
Note also that since an a.e.\ nonincreasing function agrees a.e.\ with its right-continuous
monotone representative, which coincides with the corresponding branch of $\bar U_t$
on the dense grid, hence a.e. In particular
$\Tr\bar U_t=\bar c_t+\Tr\bar V_t=\Tr\hess_{\gamma(t)}\theta_t$ for a.e.\ $t$ (the
discarded mixed entries do not enter the trace), so the trace of the present
construction recovers the scalar theory of \cite{petrunin2010alexandrov}.
\end{rem}

\section{Matrix displacement convexity implies CBB(0)}\label{sec:converse}

The goal of this section is to prove the converse direction of \eqref{eq:ARS} in the Alexandrov synthetic setting.

\begin{thm}\label{thm:displacementconveximpliesAlex}
Let $(X,d)$ be an Alexandrov space with curvature bounded below by some
$\kappa > - \infty$. Suppose that for all compactly supported probability measures
$\mu_0$ and $\mu_1$ which are absolutely continuous with respect to the Hausdorff
measure \emph{there exists}, along $\mu_H$-a.e.\ transport ray, an admissible parallel
trivialisation (Definition~\ref{def:alexentropytensor}) in which the block-diagonal entropy tensor
$\ET_t^{\mu_0\to\mu_1}$ is matrix displacement convex. Then $(X,d)$ is an
Alexandrov space with non-negative curvature (\textit{i.e.}\ $\kappa$ can be taken to
be zero).
\end{thm}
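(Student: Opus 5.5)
The introduction already fixes the strategy: focus a transport onto a shrinking ball, run a backward matrix Riccati comparison, and conclude by the squared-distance characterisation of Hu--Su--Wang \cite{HuSuWang}. The criterion I aim to verify is the following local concavity statement. For every $z\in X$, every unit-speed geodesic $c$ through a point $x$ near $z$, and small $r$,
\[
d^2(c(r),z)+d^2(c(-r),z)\le 2d^2(x,z)+2r^2+o(r^2).
\]
Equivalently, $\tfrac12 d^2(\cdot,z)$ is $1$-concave in a neighbourhood, along a dense family of geodesics, which can be taken to pass through regular points. By \cite{HuSuWang} this forces $\curv\ge0$. Since $X$ is a priori $\mathrm{CBB}(\kappa)$, the structure theory of Section~\ref{ss:prelimalex} is available throughout (regular set, a.e.\ Hessians, Hamilton--Jacobi shifts, rays). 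The task is therefore to produce the $1$-concavity of $\tfrac12 d_z^2$ from the block-diagonal Riccati inequalities $\dot c_t\le-c_t^2$ and $\dot V_t\preceq-V_t^2$.

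\emph{Construction.} Fix $z$ and small $\rho,\eta>0$. Take $\mu_0$ uniform on a small ball $B_\eta(x)$ around a regular point $x$ with $d(x,z)=\ell$, and $\mu_1$ uniform on $B_\rho(z)$, then let $\rho\to0$. The Kantorovich potential $\theta$ then approximates the focusing potential $\tfrac12 d^2(\cdot,z)$: the shifts satisfy $\theta_t\to\tfrac{1}{2(1-t)}d_z^2$-type limits, with $\theta_1$ concentrating at $z$. Along a.e.\ ray $\gamma$ from a point near $x$ to a point near $z$, the Hessian $U_t$ blows up like $-\frac{1}{1-t}\Id$ as $t\uparrow1$ in the limit $\rho\to0$.

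\emph{Backward Riccati comparison.} In the $\tau$-parallel frame, take a quadratic form $\xi\mapsto\langle\bar U_t\xi,\xi\rangle$ blockwise. The distributional inequality $\dot{\bar U}_t\preceq-\bar U_t^2$ gives, by comparison with the scalar solution $-1/(T-t)$, the matrix bound
\[
\bar U_{t}\succeq \bar U_{t_1}\bigl(\Id-(t_1-t)\bar U_{t_1}\bigr)^{-1}
\]
for $t<t_1$ (integrated backward, as the inverse of \eqref{eq:matrixriccatiforward}). Feeding in the terminal behaviour near $t_1\to1$, where $\bar U_{t_1}\approx-\frac{1}{1-t_1}\Id$ up to errors controlled by $\rho$, yields at $t=0$
\[
\bar U_0\succeq-\Id-o_\rho(1).
\]
Both blocks are handled separately: the tangential scalar by ODE comparison, and the normal block via the inverse-matrix form, since $t\mapsto(\bar U_t)^{-1}-t$ is monotone. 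Because these inequalities are pure Loewner statements in a fixed frame, the gauge of the trivialisation is irrelevant here. This is the point where existence of \emph{some} admissible trivialisation suffices.

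\emph{From Hessian to distance.} Since $\hess\theta=\bar U_0$ blockwise at a.e.\ start point, $\theta$ is $(-1-o(1))$-convex in tangential and normal directions. The mixed entries are the obstacle to full convexity. I would handle them by the $\mathrm{CBB}(\kappa)$ two-sided bound \eqref{eq:twosidedhess}: mixed entries are bounded, and by choosing the geodesic $c$ either along or orthogonal to the ray direction $x\to z$, only blocks enter. Then let $\rho\to0$; by stability of Kantorovich potentials, $-\theta\to\tfrac12 d_z^2-\text{const}$. Hence $\tfrac12 d_z^2$ is $1$-concave along geodesics through a.e.\ point in directions tangent or normal to $z$. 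The normal directions are the content, since the radial direction is exact. Since normal directions together with the radial one span all directions only up to a mixed term, one finishes by the standard fact that concavity of $d_z^2$ need only be checked on geodesics perpendicular to $[xz]$ (the comparison-angle/hinge form). Hu--Su--Wang's criterion then gives $\curv\ge0$.

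\emph{Main obstacle.} The main obstacle is the limit $\rho\to0$ combined with the a.e.\ nature of all objects. Convexity is only known along a.e.\ ray and in the distributional sense, and the terminal blow-up must be quantified uniformly. I would first try to avoid the limit by using $\mu_1$ equal to a small ball and the exact comparison on $[0,t_1]$ with $t_1$ at distance $\sim\rho$ from the endpoint. There the potential is controlled by semiconcavity of $d^2$ in $\mathrm{CBB}(\kappa)$, giving $\bar U_{t_1}\preceq-\frac{c}{\rho}\Id$, and the backward comparison loses only $O(\rho)$. A secondary difficulty is passing from a.e.\ points and directions to all geodesics, which I would handle by lower semicontinuity of the comparison inequality and density of regular points.
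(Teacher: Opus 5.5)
Your architecture (focus onto a shrinking ball, backward Riccati from the focal end, pass to the limit $\tfrac12 d_z^2$, then Hu--Su--Wang) is the right one. However, the step you label ``the obstacle to full convexity'' is a genuine gap, and your proposed fix does not close it.

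The hypothesis gives the floor only blockwise, relative to the ray direction $u^\rho(y)$. That splitting varies with the base point $y$ and is not continuous. Almost-everywhere Hessians do not pass to uniform limits. So to reach $\tfrac12 d_z^2$, and to reach arbitrary rather than almost every geodesic, you must go through a statement along whole geodesics, $(-\theta^\rho\circ c)''\le\cdots$ as measures. That statement involves $\hess_{c(s)}(-\theta^\rho)(\dot c,\dot c)$ for a.e.\ $s$. Orthogonality of $\dot c(s)$ to $u^\rho(c(s))$ holds at one parameter at most, so the cross term $2\sin\alpha\cos\alpha\,\tilde m$ is present at every other $s$.

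Invoking \eqref{eq:twosidedhess} does not help. It is an interior-time estimate and gives nothing at the source $t=0$. There only $\hess(-\theta^\rho)\preceq\lambda_0\Id$ is available, so $\tilde m$ is not even a priori bounded. A merely bounded $\tilde m$ would in any case leave a constant larger than $1$ along the geodesic.

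The ``standard fact'' that perpendicular directions suffice is true for $\tfrac12 d_z^2$ itself at its twice-differentiability points. There its Hessian is block-diagonal with tangential entry exactly $1$, by hinge comparison. But this presupposes a normal-block bound on the \emph{limit} function, which is exactly what cannot be read off from the approximants.

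The missing idea is that the focal degeneration kills the source mixed entries. Set $\delta(x):=d(x,T(x))=|\nabla\theta^\rho(x)|$. Compute $\nabla\delta$ once from $\nabla\tfrac12|\nabla\theta^\rho|^2=\hess\theta^\rho\,\nabla\theta^\rho$, and once by differentiating $d$ through both slots. This gives $(U_0+\Id)u=\mathcal A_1^{*}\nu$ with $|\nu|=1$ and $\mathcal A_1=DT$. Hence $|U_0(u,w)|\le\|\mathcal A_1\|\,|w|$ for $w\perp u$, and $\|\mathcal A_1\|\to0$ as the target ball shrinks.

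A Schur-complement argument then upgrades the blockwise floor plus this control to the \emph{isotropic} a.e.\ bound $\hess(-\theta^\rho)\preceq(1+\|\mathcal A_1\|)\Id$. An isotropic bound is what survives the limit along every geodesic: you approximate the geodesic by good transport rays between small balls and use non-branching. This gives $\hess\tfrac12 d_z^2\preceq\Id$ in the support sense, and hence condition (C) of Hu--Su--Wang at every hinge.

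Two further points on the difficulty you single out, which is not where the problem lies. First, for any $U_1\prec0$ one already has $(U_1^{-1}-\Id)^{-1}\succ-\Id$, so the floor needs no quantitative terminal blow-up; the focal degeneration is needed for the mixed entries instead. Second, your fallback bound $\bar U_{t_1}\preceq-\tfrac c\rho\Id$ cannot come from semiconcavity of $d^2$. Semiconcavity controls $U$ from below, not from above.
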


First, let us show the existence of admissible trivialisations on $\mathrm{CBB}(\kappa)$ spaces, $\kappa>-\infty$. Note that such trivialisations are not required to satisfy the second variation formula, contrary to the forward proof of Section \ref{sec:forward}.

\begin{lem}[existence of admissible trivialisations]\label{lem:existadmissible}
Let $(X,d)$ be a finite-dimensional $\mathrm{CBB}(\kappa)$ space, $\kappa>-\infty$, and
let $\mu_0,\mu_1$ be compactly supported and absolutely continuous. Then along
$\mu_H$-a.e.\ transport ray $\gamma$ there exists an admissible parallel trivialisation $\tau$ in the sense of Definition~\ref{def:alexentropytensor}. Moreover, for every such
trivialisation, the matrix $U_s$ of $\hess_{\gamma(s)}\theta_s$ in a $\tau$-parallel
orthonormal frame satisfies the \emph{two-sided interior bound}
\begin{equation}\label{eq:twosidedhess}
-\frac{\lambda_0}{s_1-s}\,\Id\ \preceq\ U_s\ \preceq\ \frac{\lambda_0}{s-s_0}\,\Id
\qquad\text{for a.e.\ }s\in(s_0,s_1),
\end{equation}
where $(s_0,s_1)$ is any parameter interval whose endpoints are interior to the ray and
$\lambda_0=\lambda_0(\kappa,\mathrm{diam})$ is the semiconcavity constant of
$\tfrac12d^2$ on the (compact) transport set.
\end{lem}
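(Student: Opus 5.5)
The lemma has two parts, and I would treat them separately. Existence needs no geometry. Definition~\ref{def:alexentropytensor} only asks for isometries with an exact cocycle, preserving the splitting, together with a measurable locally bounded extension of the normal blocks. The two-sided bound is a pointwise, frame-free statement about $\hess_{\gamma(s)}\theta_s$. Since conjugating by an orthogonal matrix preserves $\pm c\,\Id$ bounds, it holds in every orthonormal frame, hence in every $\tau$-parallel one.

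\emph{Existence (coboundary frame).} Fix a ray $\gamma$ from the full-measure family of Lemma~\ref{lem:nullrays}. By the argument of Lemma~\ref{lem:goodphase}, there is a dense countable set $D_\gamma$ of good times: the regular points of $\gamma$ lying in $\operatorname{Reg}\theta_s$. At each $s\in D_\gamma$ I would choose an orthonormal basis $F_s=(f_1(s),\dots,f_\dd(s))$ of $T_{\gamma(s)}X\cong\R^\dd$ with $f_1(s)=\dot\gamma(s)/|\dot\gamma|$, and set $\tau_{s\to t}:=F_t\circ F_s^{-1}$. The cocycle $\tau_{r\to t}\circ\tau_{s\to r}=\tau_{s\to t}$ then holds trivially as a coboundary. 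Each $\tau_{s\to t}$ is a linear isometry and sends $\dot\gamma(s)$ to $\dot\gamma(t)$ because the speed is constant, so it also maps $L_{\gamma(s)}$ onto $L_{\gamma(t)}$. In the common frame, the normal block $V_s$ is simply the matrix of $U_s|_{L}$ in the basis $(f_2(s),\dots,f_\dd(s))$. Local boundedness of this block follows from \eqref{eq:twosidedhess}. For a measurable extension to a.e.\ $s$, I would extend the choice of $F_s$ to all good $s$ by a measurable selection (Kuratowski--Ryll-Nardzewski in the DC charts, or simply defining $V_s$ off $D_\gamma$ as any measurable version). Alternatively one can note that the definition only requires some locally bounded measurable extension, and the piecewise-constant extension from a countable dense set already works. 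Together this gives admissibility.

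\emph{Two-sided bound.} Here I would reproduce the comparison of Step~3 with curvature $\ge\kappa$ and without any transport. For the upper bound, I would use the representation $\theta_s(z)=\inf_y\{\theta_{s_0}(y)+d(z,y)^2/(2(s-s_0))\}$ from \eqref{eq:HJsemigroup}, with minimiser $y=\gamma(s_0)$ at $z=\gamma(s)$ (Step~2). Then $\theta_s$ is touched from above at $\gamma(s)$ by $z\mapsto\theta_{s_0}(\gamma(s_0))+d(z,\gamma(s_0))^2/(2(s-s_0))$. Since $\tfrac12 d^2(\cdot,\gamma(s_0))$ is $\lambda_0$-concave on the compact transport set, the expansion \eqref{eq:hessdefprelim} gives $U_s\preceq\frac{\lambda_0}{s-s_0}\Id$. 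For the lower bound, I would use the dual Hopf--Lax relation backwards in time. From \eqref{eq:HJsemigroup}, $\theta_{s_1}(\gamma(s_1))\le\theta_s(z)+d(\gamma(s_1),z)^2/(2(s_1-s))$ for all $z$, with equality at $z=\gamma(s)$. So $\theta_s$ is touched from below at $\gamma(s)$ by $z\mapsto\theta_{s_1}(\gamma(s_1))-d(z,\gamma(s_1))^2/(2(s_1-s))$, and the same semiconcavity yields $U_s\succeq-\frac{\lambda_0}{s_1-s}\Id$.

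\emph{Main obstacle.} I expect the hardest point to be the endpoint behaviour of the touching argument at $\gamma(s)$. One must check that two-sided geodesic variations $\exp_{\gamma(s)}(\varepsilon w)$ are available at a regular point, which I would handle with quasigeodesics as in \eqref{eq:hessdefprelim}. One must also confirm that $\lambda_0$ depends only on $\kappa$ and the diameter of the transport set, and that the relevant points stay in a compact set where this constant applies.
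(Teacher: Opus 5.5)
Your proposal is correct and follows the paper's proof in all essentials: the trivialisation is the coboundary $\tau_{s\to t}=I_t\circ I_s^{-1}$ of splitting-adapted orthonormal frames (the paper makes these measurable by Gram--Schmidt applied to $\dot\gamma(s)/|\dot\gamma|$ followed by Otsu--Shioya coordinate frames), and your lower bound is the paper's touching from below by $\theta_{s_1}(\gamma(s_1))-d(\cdot,\gamma(s_1))^2/(2(s_1-s))$; for the upper bound the paper instead uses that $\theta_s$, as an infimum of $\tfrac{\lambda_0}{s-s_0}$-concave functions, is itself $\tfrac{\lambda_0}{s-s_0}$-concave, which avoids the matching of first-order terms your touching-from-above version needs (that matching does hold, by uniqueness of the geodesic from $\gamma(s)$ to $\gamma(s_0)$). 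One caution: keep $D_\gamma$ equal to the full-measure set of good times, as the paper does, so that the extension agrees a.e.\ with the actual Hessian data, since your fallback of a merely countable $D_\gamma$ with an arbitrary (or ``piecewise-constant'', which is not well defined from a dense countable set) extension satisfies the letter of Definition~\ref{def:alexentropytensor} but makes the resulting entropy tensor independent of $\hess\theta_s$.
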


\begin{proof}
\emph{Existence of a cocycle frame.} By Lemma~\ref{lem:nullrays} of
Appendix~\ref{app:rays} (with $N_t:=S_X\cup(X\setminus\operatorname{Reg}\theta_t)$, as
in Lemma~\ref{lem:goodphase}), for $\mu_H$-a.e.\ ray the set
$D_\gamma:=\{s:\gamma(s)\ \text{is a good time}\}$ has full measure, in particular is
dense. Fix $s_*\in D_\gamma$. On the regular set, the Otsu--Shioya $C^0$-Riemannian
structure \cite{OtsuShioya94} provides charts in which the metric tensor is a
measurable (indeed continuous on the chart domain) field of positive matrices; applying
Gram--Schmidt at $\gamma(s)$ to the family consisting of $\dot\gamma(s)/|\dot\gamma|$
followed by the coordinate frame --- so that the first vector of the resulting basis
is $\dot\gamma(s)/|\dot\gamma|$ and the remaining ones span $L_{\gamma(s)}$ --- yields
a family of splitting-adapted orthonormal
bases of $T_{\gamma(s)}X$, $s\in D_\gamma$, measurable in $s$, i.e.\ a measurable
family of linear isometries $I_s:\R^\dd\to T_{\gamma(s)}X$ with
$I_s(e_1)=\dot\gamma(s)/|\dot\gamma|$. Set
\[
\tau_{s\to t}:=I_t\circ I_s^{-1},\qquad s,t\in D_\gamma .
\]
Being a coboundary of adapted frames, this family satisfies the exact cocycle identity
and the splitting-preservation. Note that this need not be related to parallelism in any
differential-geometric sense: admissibility, Definition~\ref{def:alexentropytensor},
only requires the cocycle, the splitting-preservation and the boundedness below.

\emph{Two-sided bound.} Local boundedness of the matrices of $U_s$ is frame-independent
($\|U_s\|$ is an intrinsic quantity), so it suffices to prove
\eqref{eq:twosidedhess}; this also proves the ``for every''. Fix $s\in D_\gamma\cap(s_0,s_1)$ and
write $p:=\gamma(s)$. \emph{Upper bound:} by \eqref{eq:HJsemigroup},
$\theta_s=\inf_y\{\theta_{s_0}(y)+\tfrac1{2(s-s_0)}d(\cdot,y)^2\}$ is an infimum of
$\tfrac{\lambda_0}{s-s_0}$-concave functions, hence is itself
$\tfrac{\lambda_0}{s-s_0}$-concave on the transport set; at the point $p$, where the
second-order expansion \eqref{eq:hessdefprelim} holds, this gives
$U_s\preceq\tfrac{\lambda_0}{s-s_0}\Id$. \emph{Lower bound:} by \eqref{eq:HJsemigroup}
applied between $s$ and $s_1$, for every $z$,
\[
\theta_s(x)\ \ge\ \theta_{s_1}(z)-\frac{d(x,z)^2}{2(s_1-s)}=:\psi_z(x),
\]
and by the no-loss identity \eqref{eq:G0} equality holds at $x=p$ for
$z^*:=\gamma(s_1)$. The function $\psi:=\psi_{z^*}$ touches $\theta_s$ from below at
$p$. Since $p$ is interior to the geodesic from $\gamma(s_0)$ to $z^*$ and geodesics in
Alexandrov spaces do not branch, the shortest path $p\,z^*$ is unique, so $\psi$ is
differentiable at $p$; $\theta_s$ is differentiable at $p$ as well ($p\in
\operatorname{Reg}\theta_s$), and at the touching point the differentials agree on the
(dense) set of geodesic directions, hence everywhere on $T_pX$ by linearity. For a
direction $v$ admitting a geodesic $\sigma(\varepsilon)=\exp_p(\varepsilon v)$,
$\varepsilon\in[0,\varepsilon_0]$, the $\tfrac{\lambda_0}{s_1-s}$-concavity of $-\psi$
along $\sigma$ gives the one-sided bound
$\psi(\sigma(\varepsilon))\ge\psi(p)+\varepsilon\,d_p\psi(v)
-\tfrac{\lambda_0}{2(s_1-s)}\varepsilon^2|v|^2$, whence, comparing with the second-order expansion of $\theta_s$ at $p$ and using $\theta_s\ge\psi$, $\theta_s(p)=\psi(p)$,
$d_p\theta_s=d_p\psi$:
\[
\tfrac12\<U_sv,v\>\ \ge\ -\tfrac{\lambda_0}{2(s_1-s)}|v|^2 .
\]
Geodesic directions being dense in $\Sigma_p$ and both sides continuous in $v$, the
bound holds for all $v\in T_pX$. 

\emph{Admissibility.} The matrices $\hat U_s:=I_s^{-1}U_sI_s$, $s\in D_\gamma$, are
measurable in $s$ (composition of the measurable frame with the measurable Hessian
field; joint measurability of the latter is part of Lemma~\ref{lem:nullrays}, see
Appendix~\ref{app:rays}) and locally bounded by \eqref{eq:twosidedhess}; so are, a
fortiori, their tangential scalars $c_s$ and normal blocks $V_s$. Any
measurable locally bounded extension to a.e.\ $s$ (e.g.\ by $0$ off $D_\gamma$, which
is already of full measure) satisfies Definition~\ref{def:alexentropytensor}.
\end{proof}


We now turn to the proof of Theorem \ref{thm:displacementconveximpliesAlex}.
The proof proceeds in two main steps. First, in Section \ref{ss:MDCtoC}, we show that the matrix displacement convexity suffices to obtain the Hessian upper bound \eqref{eq:condC} for the squared distance function. Second, in section \ref{ss:CtoAlex}, we show that this latter Hessian bound implies a non-negative lower bound on the sectional curvature in the sense of Alexandrov.

\subsection{Matrix displacement convexity implies squared-distance bound}\label{ss:MDCtoC}

In this section, we will control, for every point $p$, the squared distance to $p$ along every geodesic, i.e.\ the
one-sided Hessian bound
\begin{equation}\label{eq:condC}
\hess_q\big(\tfrac12 d^2(\cdot,p)\big)\ \preceq\ \Id
\qquad(\text{in the support sense, for every }p,q).
\end{equation}
We obtain \eqref{eq:condC}
directly from a matrix Riccati comparison applied to a transport that \emph{focuses at the point} $p$ and read \emph{backwards} from the focal end.

Throughout this subsection we read the entropy-tensor integrand along a transport ray in the
form $U_t=\dot{\mathcal A}_t\,\mathcal A_t^{-1}$, where $\mathcal A_t$ is the matrix transport
Jacobi field with $\mathcal A_0=\Id$. Equivalently $U_t$ is the Hessian of the McCann velocity
potential, the normalisation for which the Jacobi identity \eqref{eq:traceentropy} holds.  Matrix displacement convexity reads $\Rs_t:=-\dot U_t-U_t^2\succeq0$. With this normalisation,
for the transport \emph{to} the point $p$ (Kantorovich potential $\tfrac12 d^2(\cdot,p)$) one
has $U_0=-\hess_x\tfrac12 d^2(\cdot,p)$, and the ray focuses at $p$.

Two readings of the hypothesis must be kept apart, the entropy tensor of
Definition~\ref{def:alexentropytensor} being block-diagonal. The \emph{block reading},
which is what Theorem~\ref{thm:displacementconveximpliesAlex} assumes, is
$\hat\Rs_t:=-\dot{\hat U}_t-\hat U_t^2\succeq0$ for the block matrix
$\hat U_t=\operatorname{diag}(c_t,V_t)$ read in the given admissible trivialisation; it
decouples into a scalar and a normal-block Riccati inequality, and all the matrix
lemmas below (Lemmas~\ref{lem:matrixriccati} and~\ref{lem:backwardriccati}) apply to
each block separately, being pure matrix statements in arbitrary dimension. The
\emph{full reading}, $\Rs_t=-\dot U_t-U_t^2\succeq0$ for the full matrix $U_t$
including its mixed entries (the hypothesis of Remark~\ref{rem:fullSV}), is strictly
stronger. Both readings yield \eqref{eq:condC}: the full reading directly
(Steps~1--2), the block reading after the focal control of the source mixed entries
(Lemma~\ref{lem:focalmixed}) supplies the missing off-diagonal information.

We first state a matrix Riccati comparison theorem valid
for a \emph{matrix-valued Radon-measure} coefficient and a merely $BV$ Riccati solution. For the sake of completeness, we give a self-contained proof by the index form.

\begin{lem}[Matrix Riccati comparison, measure coefficient]\label{lem:matrixriccati}
Let $Q$ be a nonnegative symmetric matrix-valued Radon measure on $[t_0,t_1]$ and let $U$ be a
symmetric, bounded $BV$ function on $[t_0,t_1]$ solving, distributionally,
\[
\dot U_t\ =\ -\,U_t^2-Q_t .
\]
Let $W_t:=\dot{\mathcal B}_t\,\mathcal B_t^{-1}$ with $\mathcal B_t:=\Id+(t-t_0)\,U_{t_0}$ be the
flat ($Q\equiv0$) solution with $W_{t_0}=U_{t_0}$, and assume $\mathcal B_t$ is invertible (i.e.\
$W$ is finite) on all of $[t_0,t_1]$. Then
\[
U_{t_1}\ \preceq\ W_{t_1}\ =\ U_{t_0}\bigl(\Id+(t_1-t_0)U_{t_0}\bigr)^{-1}.
\]
If moreover $U_{t_0}\succ0$, then $W_{t_1}=\bigl(U_{t_0}^{-1}+(t_1-t_0)\Id\bigr)^{-1}$.
\end{lem}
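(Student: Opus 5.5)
The plan is to compare $U$ with the flat solution $W$ through the quantity $\mathcal B_t(U_t-W_t)\mathcal B_t$. This is the index-form argument, written in the Riccati variables so that only the product rule for $BV$ functions against smooth ones is needed. First I record that $W_t=U_{t_0}\mathcal B_t^{-1}=\mathcal B_t^{-1}U_{t_0}$, since $\mathcal B_t$ is a polynomial in $U_{t_0}$ and so commutes with it. This immediately gives $W_{t_1}=U_{t_0}(\Id+(t_1-t_0)U_{t_0})^{-1}$. When $U_{t_0}\succ0$ one may factor $U_{t_0}$ out and obtain $W_{t_1}=(U_{t_0}^{-1}+(t_1-t_0)\Id)^{-1}$, which settles the last claim. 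Differentiating shows that $W$ is smooth on $[t_0,t_1]$ and solves $\dot W=-W^2$ classically, because $\dot W_t=-U_{t_0}\mathcal B_t^{-1}U_{t_0}\mathcal B_t^{-1}$.

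Next set $D_t:=U_t-W_t$. This is bounded, symmetric and $BV$, with $D_{t_0}=0$ and, distributionally,
\[
\dot D_t=-(U_t^2-W_t^2)-Q_t=-(W_tD_t+D_tW_t+D_t^2)-Q_t.
\]
Since $\mathcal B_t$ is smooth and invertible with $\dot{\mathcal B}_t=U_{t_0}=W_t\mathcal B_t$ (equivalently $\mathcal B_tW_t$, by commutativity), the matrix $E_t:=\mathcal B_tD_t\mathcal B_t$ is $BV$. The Leibniz rule applies because one factor is smooth, and it gives the distributional identity
\[
\dot E_t=\mathcal B_t\bigl(W_tD_t+D_tW_t+\dot D_t\bigr)\mathcal B_t=-\mathcal B_t\bigl(D_t^2+Q_t\bigr)\mathcal B_t\ \preceq\ 0 .
\]
The right-hand side is a nonpositive symmetric matrix-valued measure: $D^2\succeq0$ is bounded, and $\mathcal B Q\mathcal B\succeq0$ because $Q\succeq0$. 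The $W$-linear terms cancel exactly, and this cancellation is the point of conjugating by $\mathcal B$.

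The conclusion then follows by testing with a fixed vector $\xi$. The scalar $BV$ function $t\mapsto\langle E_t\xi,\xi\rangle$ has nonpositive distributional derivative, so its good representative is nonincreasing. Hence $\langle E_{t_1}\xi,\xi\rangle\le\langle E_{t_0}\xi,\xi\rangle=0$, so $E_{t_1}\preceq0$. Conjugating back by the invertible $\mathcal B_{t_1}^{-1}$ preserves the Loewner order and gives $D_{t_1}\preceq0$, that is, $U_{t_1}\preceq W_{t_1}$.

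The main obstacle is the meaning of the endpoint values $U_{t_0}$ and $U_{t_1}$ for a merely $BV$ function that may jump, together with the Leibniz step for a $BV$ factor. I would state the lemma for the right-continuous representative at $t_0$ and the left limit at $t_1$. More simply, one can use that a measure-valued $\dot E\preceq0$ makes the jumps of $E$ nonpositive at every point, so the inequality holds for both one-sided limits at $t_1$ provided $D_{t_0}=0$ in the representative used to define $W$. Only the product of a $BV$ function with a $C^\infty$ matrix enters, and there the Leibniz rule is standard, so no product of two measures ever appears.
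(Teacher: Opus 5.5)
Your argument is correct, and it is organised differently from the paper's.

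\textbf{The paper's route.} The paper goes through the index form. It solves $\dot{\mathcal A}=U\mathcal A$, $\mathcal A_{t_0}=\Id$ (so that $\ddot{\mathcal A}=-Q\mathcal A$) and writes $\langle U_{t_1}u,u\rangle=I(J)$ for $J=\mathcal A\,\mathcal A_{t_1}^{-1}u$. It then proves, via the completed square $\int|\dot\zeta-U\zeta|^2\,dt\ge0$, that $J$ minimises $I$ among fields with $\eta(t_1)=u$. Finally it tests with the flat field $\bar J=\mathcal B\,\mathcal B_{t_1}^{-1}u$ and discards $-\int\langle dQ\,\bar J,\bar J\rangle\le0$.

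\textbf{Your route.} You replace the variational step by a single Loewner-monotone quantity $E=\mathcal B(U-W)\mathcal B$. The two proofs in fact produce the same identity. With $\zeta=\bar J-J$ one has $\dot\zeta-U\zeta=(W-U)\bar J=-D\bar J$. Hence your expression $\langle -E_{t_1}\mathcal B_{t_1}^{-1}u,\mathcal B_{t_1}^{-1}u\rangle=\int|D\bar J|^2\,dt+\int\langle dQ\,\bar J,\bar J\rangle$ is, term by term, what the paper's argument yields for $\langle (W_{t_1}-U_{t_1})u,u\rangle$.

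\textbf{Comparison.} Your route is more economical. It needs no linear ODE with $BV$ coefficient and no invertibility of $\mathcal A_{t_1}$. It also avoids Stieltjes integration by parts of $\langle\dot J,J\rangle$ with $\dot J$ merely $BV$; only the Leibniz rule against the smooth symmetric factor $\mathcal B$ is used. Monotonicity of $E$ also gives $U_t\preceq W_t$ for every $t\in[t_0,t_1]$ at once. The paper's route stays in the Jacobi-field normalisation $U=\dot{\mathcal A}\mathcal A^{-1}$ used throughout the converse direction, and it exhibits $J$ as a minimiser.

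\textbf{A precision on your endpoint remark.} $D_{t_0}=0$ holds by the very definition of $W$, whatever value $U_{t_0}$ has. What is actually needed is that this value is compatible with the equation on the closed interval, i.e.\ $U_{t_0^+}-U_{t_0}=-Q(\{t_0\})\preceq0$ (for instance $U$ right-continuous at $t_0$). An upward jump of $U$ at $t_0$ not produced by $Q$ would falsify the lemma. Your right-continuous convention settles this, and the paper's proof tacitly needs the same convention.
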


\begin{proof}
Let $\mathcal A$ be the absolutely continuous solution of $\dot{\mathcal A}_t=U_t\mathcal A_t$,
$\mathcal A_{t_0}=\Id$ 
Since $U$ is bounded, $\dot{\mathcal A}$ is
$BV$, and as measures
\[
\ddot{\mathcal A}=\dot U\,\mathcal A+U\dot{\mathcal A}=(-U^2-Q)\mathcal A+U^2\mathcal A=-Q\,\mathcal A .
\]
Likewise $\ddot{\mathcal B}=0$ with $\mathcal B_{t_0}=\Id$, $\dot{\mathcal B}_{t_0}=U_{t_0}$, so
$\mathcal A,\mathcal B$ share the same initial data $(\Id,U_{t_0})$.

Fix $u\in\R^n$ and let $J(t):=\mathcal A_t\,\mathcal A_{t_1}^{-1}u$; then $J$ is AC with $\dot J$ of
bounded variation, $\ddot J=-Q\,J$ as a measure, $J(t_1)=u$, and at $t_0$
\[
\dot J(t_0)=\dot{\mathcal A}_{t_0}\mathcal A_{t_1}^{-1}u=U_{t_0}\,\mathcal A_{t_1}^{-1}u
=U_{t_0}\,J(t_0).
\]
The Stieltjes integration-by-parts formula for the $BV$ pairing $t\mapsto\langle\dot J,J\rangle$
gives the \emph{index-form identity}
\begin{align*}
\langle U_{t_1}u,u\rangle & =\langle\dot J(t_1),J(t_1)\rangle\\
&= \langle U_{t_0}J(t_0),J(t_0)\rangle+\int_{t_0}^{t_1}|\dot J|^2\,dt-\int_{[t_0,t_1]}\langle dQ\,J,J\rangle\\
&=:I(J),
\end{align*}
where we used $d\langle\dot J,J\rangle=|\dot J|^2\,dt+\langle d\dot J,J\rangle$ and
$d\dot J=\ddot J=-dQ\,J$. The same computation with $Q\equiv0$ for $\bar J(t):=\mathcal
B_t\mathcal B_{t_1}^{-1}u$ (which satisfies $\ddot{\bar J}=0$, $\dot{\bar
J}(t_0)=U_{t_0}\bar J(t_0)$, $\bar J(t_1)=u$) gives
\[
\langle W_{t_1}u,u\rangle=\langle U_{t_0}\bar J(t_0),\bar J(t_0)\rangle
+\int_{t_0}^{t_1}|\dot{\bar J}|^2\,dt=I_0(\bar J),
\]
$I_0$ being the index form with $Q$ omitted.

Write any AC field $\eta$ with $\eta(t_1)=u$ as $\eta=J+\zeta$, $\zeta(t_1)=0$. Integrating by
parts and using $\ddot J=-dQ\,J$ together with the natural boundary condition $\dot
J(t_0)=U_{t_0}J(t_0)$, the cross term vanishes:
\[
I(\eta)=I(J)+\Bigl[\langle U_{t_0}\zeta(t_0),\zeta(t_0)\rangle+\int_{t_0}^{t_1}|\dot\zeta|^2\,dt
-\int_{[t_0,t_1]}\langle dQ\,\zeta,\zeta\rangle\Bigr].
\]
The bracket is nonnegative for \emph{every} $\zeta$ with $\zeta(t_1)=0$: using
$d\langle U\zeta,\zeta\rangle=-|U\zeta|^2\,dt-\langle dQ\,\zeta,\zeta\rangle+2\langle
U\zeta,\dot\zeta\rangle\,dt$ (which is just $\dot U=-U^2-Q$ paired with $\zeta$), one gets
$|\dot\zeta|^2\,dt-d\langle U\zeta,\zeta\rangle=|\dot\zeta-U\zeta|^2\,dt+\langle dQ\,\zeta,\zeta
\rangle$, and integrating over $[t_0,t_1]$ with $\zeta(t_1)=0$, $U_{t_0}$ as the boundary value of
$U$,
\begin{equation}\label{eq:indexpositive}
\langle U_{t_0}\zeta(t_0),\zeta(t_0)\rangle+\int_{t_0}^{t_1}|\dot\zeta|^2\,dt
-\int_{[t_0,t_1]}\langle dQ\,\zeta,\zeta\rangle
=\int_{t_0}^{t_1}|\dot\zeta-U\zeta|^2\,dt\ \ge\ 0 ,
\end{equation}
where $U$ is bounded on $[t_0,t_1]$ (so the right-hand side is finite) because $\mathcal A$ is
invertible there. Thus the bracket is $\ge0$ and $J$ minimises the index form: $I(J)\le I(\eta)$
for every $\eta$ with $\eta(t_1)=u$. In particular $I(J)\le I(\bar J)$.

Finally, since $Q\succeq0$,
\begin{align*}
I(\bar J) &=\langle U_{t_0}\bar J(t_0),\bar J(t_0)\rangle+\int_{t_0}^{t_1}|\dot{\bar J}|^2\,dt
-\int_{[t_0,t_1]}\langle dQ\,\bar J,\bar J\rangle\\
& \le\ \langle U_{t_0}\bar J(t_0),\bar J(t_0)\rangle+\int_{t_0}^{t_1}|\dot{\bar J}|^2\,dt\\
& =\langle W_{t_1}u,u\rangle ,
\end{align*}
the last equality being the index-form identity for the flat field $\bar J$ (apply the first
display with $Q\equiv0$). Combining, $\langle U_{t_1}u,u\rangle=I(J)\le I(\bar J)\le\langle
W_{t_1}u,u\rangle$. As $u$ was arbitrary, $U_{t_1}\preceq W_{t_1}$. Finally
$W_{t_1}=\dot{\mathcal B}_{t_1}\mathcal B_{t_1}^{-1}=U_{t_0}(\Id+(t_1-t_0)U_{t_0})^{-1}$, which
equals $(U_{t_0}^{-1}+(t_1-t_0)\Id)^{-1}$ when $U_{t_0}\succ0$.
\end{proof}

\begin{lem}[Backward matrix Riccati comparison]\label{lem:backwardriccati}
Let $U_t$ be a symmetric, bounded $BV$ solution of $\dot U_t=-U_t^2-\Rs_t$ on $[t_0,t_1]$,
where $\Rs_t\succeq0$ is a nonnegative symmetric matrix-valued Radon measure. If $U_{t_1}\prec0$, then
$U_{t_1}^{-1}-(t_1-t_0)\Id\prec0$ and
\begin{equation}\label{eq:backwardriccati}
U_{t_0}\ \succeq\ \big(U_{t_1}^{-1}-(t_1-t_0)\Id\big)^{-1}.
\end{equation}
\end{lem}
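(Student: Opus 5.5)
The plan is to reduce the backward statement to the forward comparison of Lemma~\ref{lem:matrixriccati} by inverting the Riccati solution. For a Riccati solution the inverse $Z_t:=U_t^{-1}$ satisfies, wherever $U_t$ is invertible, $\dot Z_t=-Z_t\dot U_tZ_t=\Id+Z_t\Rs_tZ_t\succeq\Id$. Hence $Z$ increases at least at unit speed. Integrating from $t$ to $t_1$ gives $Z_t\preceq Z_{t_1}-(t_1-t)\Id=U_{t_1}^{-1}-(t_1-t)\Id\prec0$, since $U_{t_1}^{-1}\prec0$. This is the claimed negativity at $t=t_0$. Inverting the order relation between negative definite matrices (operator monotonicity of $A\mapsto -A^{-1}$ on negative definite matrices, i.e.\ $A\preceq B\prec0\Rightarrow B^{-1}\preceq A^{-1}$) then yields $U_{t_0}\succeq (U_{t_1}^{-1}-(t_1-t_0)\Id)^{-1}$.

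\emph{Invertibility on the whole interval.} The computation needs $U_t$ invertible on all of $[t_0,t_1]$, and a priori I only know this at $t_1$. I would argue by continuity backwards from $t_1$. Let $t_*:=\inf\{t\in[t_0,t_1]: U_r\prec0\ \text{for all}\ r\in[t,t_1]\}$. On $(t_*,t_1]$ the bound above gives $U_r\preceq (U_{t_1}^{-1}-(t_1-r)\Id)^{-1}$, which has all eigenvalues at most $-1/(\|U_{t_1}^{-1}\|+(t_1-t_0))<0$, uniformly bounded away from $0$.

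Since $\dot U=-U^2-\Rs$ with $\Rs\succeq0$, $U$ is $\preceq$-nonincreasing up to an absolutely continuous part. Jumps of $U$ are therefore downward: $U_{t^+}\preceq U_{t^-}$. Left limits at $t_*$ could in principle be larger, and this is the delicate point. I would control it with the measure form of the identity. Going backwards, $U$ can only increase through the absolutely continuous $-U^2$ term, which is bounded because $U$ is bounded. Hence $U_{t_*^-}\preceq U_{t_*^+}+o(1)$, the negativity persists slightly to the left of $t_*$, and this contradicts minimality unless $t_*=t_0$.

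\emph{Making the derivative computation rigorous for BV data.} To differentiate $Z$ I would avoid the chain rule for BV functions with jumps. Instead I would reuse the index-form idea of Lemma~\ref{lem:matrixriccati}: take the matrix Jacobi field $\mathcal A$ with $\dot{\mathcal A}=U\mathcal A$ and $\ddot{\mathcal A}=-\Rs\mathcal A$, and work with $\mathcal A\dot{\mathcal A}^{-1}$. Alternatively, and more simply, I would test the inequality against vectors. For fixed $\xi$, the function $t\mapsto\langle Z_t\xi,\xi\rangle$ has distributional derivative $\langle(\Id+Z\Rs Z)\xi,\xi\rangle\ge|\xi|^2$. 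The product rule $d(U Z)=0$ holds for BV products provided jumps are handled with the averaged (Vol'pert) convention; at jumps, downward jumps of $U\prec0$ produce upward jumps of $Z$, consistent with the inequality.

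I expect this BV/jump bookkeeping to be the main obstacle, together with the continuation argument at $t_*$. The first attempt would be to mollify: apply the smooth computation on each interval between atoms of $\Rs$, and treat atoms by the elementary fact that $U^+\preceq U^-\prec0$ implies $(U^+)^{-1}\succeq(U^-)^{-1}$.
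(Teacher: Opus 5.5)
Your inversion idea works on any interval where $U$ stays uniformly invertible, and your first paragraph ($\dot Z=\Id+Z\Rs Z\succeq\Id$, then inverting the order between negative definite matrices) is correct there. The gap is the claim that $U_t\prec0$ on all of $[t_0,t_1]$. That claim is false, and the continuation argument offered for it rests on a reversed inequality.

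\textbf{The reversed inequality.} From $Z_r\preceq U_{t_1}^{-1}-(t_1-r)\Id\prec0$ you get $U_r=Z_r^{-1}\succeq\bigl(U_{t_1}^{-1}-(t_1-r)\Id\bigr)^{-1}$. This is a \emph{lower} bound, the same one you use at $t_0$. It is not the upper bound $U_r\preceq(\cdots)^{-1}$ that would keep the eigenvalues of $U_r$ away from $0$. Nothing prevents eigenvalues of $Z_r$ from tending to $-\infty$, i.e.\ eigenvalues of $U_r$ from tending to $0^-$.

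\textbf{Backward monotonicity.} $U$ is $\preceq$-nonincreasing outright, since both $-U^2\,dt$ and $-\Rs$ are $\preceq0$. Read backwards from $t_1$, $U$ therefore \emph{increases}. It does so through the $U^2\,dt$ term and also through $\Rs$, whose atoms give $U_{\tau^-}=U_{\tau^+}+\Rs(\{\tau\})$. So your estimate $U_{t_*^-}\preceq U_{t_*^+}+o(1)$ fails at an atom.

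\textbf{A counterexample without atoms.} Take $n=1$, $\Rs=dt$, and $u(t)=-\tan t$ on $[t_0,t_1]=[-\tfrac12,\tfrac1{10}]$. This is a bounded smooth solution of $\dot u=-u^2-1$ with $u(t_1)<0$. Yet $u(0)=0$ and $u(t_0)=\tan\tfrac12>0$. Here $t_*=0$, $u_r\to0^-$ as $r\downarrow0$ (violating your claimed uniform bound), and $Z=1/u$ blows up inside the interval. The lemma's conclusion still holds, since its right-hand side is negative, but your argument never reaches it. (The negativity of $U_{t_1}^{-1}-(t_1-t_0)\Id$ itself is immediate from $U_{t_1}^{-1}\prec0$ and needs no argument.)

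\textbf{What is missing.} In the scalar case the loss of invertibility is harmless: if $u$ does not stay negative, then $u_{t_0}\ge0$ by monotonicity. In the matrix case, however, only some eigenvalues may cross zero, $Z$ ceases to exist, and you need an argument that never inverts $U$.

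The paper's route does exactly this. It sets $\tilde U_s:=-U_{t_1-s}$ and $\tilde\Rs_s:=\Rs_{t_1-s}$, which solve the same forward equation $\dot{\tilde U}=-\tilde U^2-\tilde\Rs$ with $\tilde U_0=-U_{t_1}\succ0$. The flat solution $(\tilde U_0^{-1}+s\Id)^{-1}$ is then finite for all $s\ge0$, so Lemma~\ref{lem:matrixriccati} applies. Its index-form proof inverts only the Jacobi matrix $\mathcal A$ (with $\dot{\mathcal A}=U\mathcal A$), which is invertible because $U$ is bounded, so sign changes of $U$ are harmless. Negating the resulting bound gives \eqref{eq:backwardriccati}.

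Alternatively, you can repair your direct approach by comparing with the flat solution $W_t:=(U_{t_1}^{-1}-(t_1-t)\Id)^{-1}$. It is finite on $[t_0,t_1]$, with $\dot W=-W^2$ and $W_{t_1}=U_{t_1}$.
\begin{itemize}
\item The difference $D:=U-W$ satisfies $dD=-(MD+DM)\,dt-\Rs$ with $M:=\tfrac12(U+W)$ bounded.
\item Let $\Phi$ be the absolutely continuous, invertible solution of $\dot\Phi=M\Phi$. Then $d(\Phi^{\top}D\Phi)=-\Phi^{\top}\Rs\Phi\preceq0$.
\item Since $D_{t_1}=0$, this gives $D_{t_0}\succeq0$.
\end{itemize}
In both arguments the BV/jump bookkeeping you were worried about disappears, because the BV function is only ever multiplied by continuous factors.
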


\begin{proof}
We reduce to the forward comparison of Lemma~\ref{lem:matrixriccati} by time reversal, which
turns the focal (backward) problem into a regular initial-value one and avoids any
smallest-eigenvalue barrier. For $s\in[0,t_1-t_0]$ set
\[
\tilde U_s:=-\,U_{t_1-s},\qquad \tilde\Rs_s:=\Rs_{t_1-s} .
\]
Then $\tilde U$ is symmetric, bounded and $BV$, $\tilde\Rs\succeq0$ is a nonnegative matrix Radon
measure, and, since reversing time sends $\dot U_t=-U_t^2-\Rs_t$ to
\[
\tfrac{d}{ds}\tilde U_s=\dot U_{t_1-s}=-U_{t_1-s}^2-\Rs_{t_1-s}=-\tilde U_s^2-\tilde\Rs_s ,
\]
$\tilde U$ solves the \emph{same} forward Riccati equation on $[0,t_1-t_0]$, with initial datum
$\tilde U_0=-U_{t_1}\succ0$ (here we use $U_{t_1}\prec0$). The flat comparison solution with this
datum is
\[
\tilde W_s=\tilde U_0\bigl(\Id+s\,\tilde U_0\bigr)^{-1}=\bigl(\tilde U_0^{-1}+s\Id\bigr)^{-1},
\]
which, because $\tilde U_0\succ0$, is positive definite and \emph{finite for every} $s\ge0$ (a
positive-definite flat Riccati solution never blows up forward in time); in particular the
finiteness hypothesis of Lemma~\ref{lem:matrixriccati} holds on all of $[0,t_1-t_0]$.
Lemma~\ref{lem:matrixriccati} gives $\tilde U_{t_1-t_0}\preceq\tilde W_{t_1-t_0}$, i.e.
\[
-\,U_{t_0}\ \preceq\ \bigl((-U_{t_1})^{-1}+(t_1-t_0)\Id\bigr)^{-1}.
\]
Negating 
 yields exactly \eqref{eq:backwardriccati}.
\end{proof}

The second ingredient is the focal degeneration of a transport into a shrinking ball.

\begin{lem}[Focal blow-up of transport to a shrinking ball]\label{lem:focalblowup}
Let $p$ be a regular point, let $\mu_0\in\PR$ be a fixed absolutely continuous measure, and for
$\varepsilon>0$ set $\mu_1^\varepsilon=\mu_H(B(p,\varepsilon))^{-1}\,\mu_H\!\restriction\!
B(p,\varepsilon)$. Let $U^\varepsilon_t$ be the entropy-tensor integrand of the optimal
transport $\mu_0\to\mu_1^\varepsilon$, read in McCann time $t\in[0,1]$, so that
$\mathcal A^\varepsilon_1=DT^\varepsilon$ is the differential of the optimal map and
$U^\varepsilon_1\prec0$. Then for $\mu_0$-a.e.\ $x$,
\[
\big(U^\varepsilon_1\big)^{-1}\ \longrightarrow\ 0\qquad(\varepsilon\to0),
\qquad\text{hence}\qquad
\big(U^{\varepsilon\,-1}_1-\Id\big)^{-1}\ \longrightarrow\ -\Id .
\]
\end{lem}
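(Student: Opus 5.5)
The plan is to reverse time, as in Lemma~\ref{lem:backwardriccati}, and read $(U^\varepsilon_1)^{-1}$ as the initial data of the reverse transport from $\mu^\varepsilon_1$ back to $\mu_0$. Fix $x$ in the full-measure set where all the objects below are defined for every $\varepsilon$ in a countable dense family, and write $y_\varepsilon:=T^\varepsilon(x)\in B(p,\varepsilon)$. Along the ray, set $\tilde{\mathcal A}^\varepsilon_s:=\mathcal A^\varepsilon_{1-s}(\mathcal A^\varepsilon_1)^{-1}$. Then $\tilde{\mathcal A}^\varepsilon_0=\Id$, $\dot{\tilde{\mathcal A}}{}^\varepsilon_0=-U^\varepsilon_1$, and $\tilde{\mathcal A}^\varepsilon_1=(\mathcal A^\varepsilon_1)^{-1}=D(T^\varepsilon)^{-1}$ is the differential at $y_\varepsilon$ of the inverse optimal map $S^\varepsilon$, which sends $B(p,\varepsilon)$ onto $\operatorname{spt}\mu_0$. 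Thus $\tilde U^\varepsilon_s:=-U^\varepsilon_{1-s}$ is the entropy-tensor integrand of the reverse transport. The claim $(U^\varepsilon_1)^{-1}\to0$ is equivalent to every eigenvalue of $\tilde U^\varepsilon_0=-U^\varepsilon_1\succ0$ tending to $+\infty$. The second assertion of the lemma then follows by continuity of $B\mapsto(B-\Id)^{-1}$ at $B=0$.

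\emph{Step 1 (scale).} I would first show that the reverse transport has displacement of order one but starts from a set of diameter $2\varepsilon$. By Lemma~\ref{lem:existadmissible}, applied to the reverse transport on an interval $(s_0,s_1)\Subset(0,1)$, $\tilde U^\varepsilon_s$ is bounded on compact subintervals uniformly in $\varepsilon$, with $\lambda_0=\lambda_0(\kappa,\diam)$ depending only on the fixed compact transport set. This reduces the problem to controlling $\tilde U^\varepsilon$ on an initial interval $[0,s_0]$.

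\emph{Step 2 (determinant).} I would use the change-of-variables (Monge--Amp\`ere) identity that underlies \eqref{eq:traceentropy}:
\[
\det\mathcal A^\varepsilon_1(x)=\rho_0(x)\,\mu_H(B(p,\varepsilon))\to0 .
\]
Since $p$ is regular, $\mu_H(B(p,\varepsilon))\sim\omega_\dd\varepsilon^\dd$, so $\det\tilde{\mathcal A}^\varepsilon_1\sim c(x)\varepsilon^{-\dd}$.

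\emph{Step 3 (from determinant to individual eigenvalues).} This step is the main obstacle: the determinant alone does not force every eigenvalue of $\tilde U^\varepsilon_0$ to blow up. To get eigenvalue-wise control I would argue as follows. The forward Riccati equation for $\tilde U^\varepsilon$ holds with curvature term bounded below by $\kappa$ (the $\mathrm{CBB}(\kappa)$ version of Lemma~\ref{lem:matrixriccati}). This should give, for each unit vector $\xi$,
\[
|\tilde{\mathcal A}^\varepsilon_1\xi|\ \le\ C(\kappa)\bigl(1+\langle\tilde U^\varepsilon_0\xi,\xi\rangle\bigr),
\]
and similarly for the largest eigenvalue. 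Since $\tilde{\mathcal A}^\varepsilon_1$ is the differential of $S^\varepsilon$, its singular values multiply to order $\varepsilon^{-\dd}$. On the other hand, $\log\det\tilde{\mathcal A}^\varepsilon_1=\int_0^1\Tr\tilde U^\varepsilon_s\,ds$ is dominated by the initial layer, where $\tilde U^\varepsilon_s\approx(\tilde U_0^{\varepsilon\,-1}+s\Id)^{-1}$ up to $O(\kappa)$ errors. Integrating over $s$ gives
\[
\sum_i\log\bigl(1+\lambda_i^\varepsilon\bigr)\ \ge\ \dd\log(1/\varepsilon)-C ,
\]
where the $\lambda_i^\varepsilon$ are the eigenvalues of $\tilde U^\varepsilon_0$. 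An individual upper bound is needed to turn this sum into a bound on each term. I would get it from the upper bound in \eqref{eq:twosidedhess} applied to the reverse transport, which gives $\lambda_i^\varepsilon\lesssim\lambda_0/s$ only on the scale of time $s$. I would try to sharpen it to $\lambda_i^\varepsilon\le C/\varepsilon$ using the fact that $S^\varepsilon$ is the gradient exponential of a $\tfrac12d^2$-concave potential on a ball of radius $\varepsilon$ whose image has bounded diameter. Combined with the sum bound, this forces each $\log(1+\lambda_i^\varepsilon)\ge\log(1/\varepsilon)-C'$, so every $\lambda_i^\varepsilon\to\infty$.

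\emph{Step 4 (full limit).} Finally, I would pass from the countable family to the full limit $\varepsilon\to0$. The key is that the constants above are uniform in $\varepsilon$ and depend only on $x$ through $\rho_0(x)$ and regularity of the ray. The bound $\lambda_{\min}(\tilde U^\varepsilon_0)\ge c/\varepsilon$ then holds for every $\varepsilon$ outside a $\mu_0$-null set, with the exceptional set independent of $\varepsilon$, because the $\mu_0$-a.e. set of good $x$ can be taken as an intersection over rational $\varepsilon$ together with monotonicity in $\varepsilon$ of the target balls.
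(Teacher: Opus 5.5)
\textbf{Verdict: there is a genuine gap in Step~3.} Your closing arithmetic is sound: $\sum_i\log(1+\lambda_i^\varepsilon)\ge\dd\log(1/\varepsilon)-C$ together with $\lambda_i^\varepsilon\le C/\varepsilon$ for each $i$ does force every $\lambda_i^\varepsilon\gtrsim1/\varepsilon$. The problem is that neither input is available on a $\mathrm{CBB}(\kappa)$ space.

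\emph{The sum bound.} It needs $\log\det\tilde{\mathcal A}^\varepsilon_1\le\log\det(\Id+\tilde U^\varepsilon_0)+C$. That is a \emph{matrix} Riccati/Jacobi comparison for the full $\tilde U^\varepsilon$ under $\curv\ge\kappa$, and no such comparison exists here. Lemma~\ref{lem:matrixriccati} is a pure ODE statement that takes the Riccati inequality as a hypothesis. In the paper that inequality is obtained only for the pinched block-diagonal matrix: through the construction of Section~\ref{ss:selection} when $\kappa=0$, or as the hypothesis of Theorem~\ref{thm:displacementconveximpliesAlex}. The eigenvalues of the pinched matrix are not those of $\tilde U^\varepsilon_0$. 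What $\curv\ge\kappa$ gives for free is trace-level control, via Petrunin's $\mathrm{CD}((\dd-1)\kappa,\dd)$ \cite{petrunin2010alexandrov}: roughly $\dd\log\bigl(1+\tfrac1\dd\Tr\tilde U^\varepsilon_0\bigr)\ge\dd\log(1/\varepsilon)-C$. This only makes the \emph{mean} eigenvalue of order $1/\varepsilon$. Combined with an individual upper bound, it says nothing about $\lambda_{\min}$ (take $\lambda_1\sim C/\varepsilon$, $\lambda_2\sim1$).

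Separately, your displayed estimate $|\tilde{\mathcal A}^\varepsilon_1\xi|\le C(\kappa)(1+\langle\tilde U^\varepsilon_0\xi,\xi\rangle)$ already fails in $\R^2$ when $\xi$ is not an eigenvector. A Jacobi bound involves $|\tilde U^\varepsilon_0\xi|$, not the quadratic form.

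\emph{The individual bound.} The bound $\lambda_i^\varepsilon\le C(x)/\varepsilon$ is the real crux, and you only say you would try to get it. None of the facts you cite yields it:
\begin{itemize}
\item The upper half of \eqref{eq:twosidedhess} for the reverse transport at its initial time would require the ray to continue past $T^\varepsilon(x)$.
\item $\tfrac12d^2$-concavity of the reverse potential bounds $\tilde U^\varepsilon_0$ from \emph{below}, not above.
\item A gradient bound on an $\varepsilon$-ball gives only an averaged, $L^1$-type Hessian bound of order $1/\varepsilon$, never a pointwise one.
\item Information at later times cannot bound the initial expansion rate from above without an upper curvature bound.
\end{itemize}

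\emph{The missing idea} is the blow-up at the regular point $p$, which is how the paper argues.
\begin{itemize}
\item Rescale the target by $\Phi^\varepsilon=\varepsilon^{-1}\exp_p^{-1}$ into the Euclidean cone $T_pX$.
\item The linearised cost $-\langle\exp_p^{-1}(x),w\rangle$ then produces, in the limit, a Brenier map $\nabla g\circ\exp_p^{-1}$ onto $\mathrm{unif}(B(0,1))$, with $\hess g$ finite and invertible a.e.\ (Alexandrov and Monge--Amp\`ere).
\item This gives $\mathcal A^\varepsilon_1=O(\varepsilon)$ isotropically.
\item For the reversed transport, which expands from the round ball, it gives directly $-U^\varepsilon_1\succeq c(x)\varepsilon^{-1}\Id$. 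In $\R^\dd$ this is exact: $-U^\varepsilon_1=\varepsilon^{-1}(\hess g)^{-1}-\Id$.
\end{itemize}
Once you have this, the determinant identity and the Riccati detour are unnecessary.

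\emph{Step~4} is not needed: the lemma is only used along a sequence $\varepsilon_j$, where a countable intersection of full-measure sets suffices. Its justification is also invalid, since $T^\varepsilon$ does not depend monotonically on $\varepsilon$, so nesting of the target balls transfers no Hessian bound.
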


\begin{proof}
We assemble three standard inputs, cited rather than reproved: (i) existence and uniqueness of the
optimal map on the $\mathrm{CBB}$ space $X$ for the cost $\tfrac12 d^2$, with absolutely
continuous source \cite[Thm.~1.1]{bertrand2008existence}; (ii) stability of optimal plans under
weak convergence of the marginals — as $\mu_1^\varepsilon\rightharpoonup\delta_p$ (and after the
$\varepsilon^{-1}$ rescaling below, $\Phi^\varepsilon_\#\mu_1^\varepsilon\rightharpoonup
\mathrm{unif}(B(0,1))$), any weak limit of the optimal plans is optimal for the limit problem
\cite[Thm.~5.20]{Villani09}, and by the uniqueness (i) the optimal maps themselves converge in
$\mu_0$-measure; (iii) Alexandrov's a.e.\ second differentiability of semiconcave/convex
functions and the a.e.\ differentiability and invertibility of $\exp_p$ at a regular point.

\emph{Isotropic compression.} Rescale the target at $p$ by $\varepsilon^{-1}$ via
\[
\Phi^\varepsilon=\varepsilon^{-1}\exp_p^{-1}\colon B(p,\varepsilon)\to B(0,1)\subset T_pX.
\]
The maps $\tilde T^\varepsilon:=\Phi^\varepsilon\circ T^\varepsilon$ transport $\mu_0$ to
$\mathrm{unif}(B(0,1))$, and by the first-order expansion
\[
\tfrac12 d\big(x,\exp_p(\varepsilon w)\big)^2=\tfrac12 d(x,p)^2-\varepsilon\,\langle
\exp_p^{-1}(x),\,w\rangle+O(\varepsilon^2)
\]
they converge to the optimal map $\tilde T^0$ for the limiting cost
$-\langle\exp_p^{-1}(x),w\rangle$ from $\mu_0$ to $\mathrm{unif}(B(0,1))$; writing
$V=\exp_p^{-1}$, one has $\tilde T^0=\nabla g\circ V$ with $g$ convex (Brenier). By Alexandrov's theorem $\hess g$ exists, is finite, and (since $\det\hess g=\rho_{\mathrm{unif}}/\rho_{V_\#\mu_0}>0$
a.e.) is invertible $\mu_0$-a.e.; at the regular point $p$, $DV=D\exp_p^{-1}$ is finite and
invertible a.e. Hence $D\tilde T^0$ is finite and invertible a.e., and
\[
\mathcal A^\varepsilon_1(x)=DT^\varepsilon(x)=\varepsilon\,(D\exp_p)\,D\tilde T^\varepsilon(x)\
\longrightarrow\ 0\qquad(\mu_0\text{-a.e.}),
\]
the factor $\varepsilon$ being scalar, so the convergence to $0$ is isotropic.

\emph{Lower bound at the focal end.} The Hessian at the focal end equals, up to sign, the source
Hessian of the \emph{reversed} transport. Indeed, if $\check\mu_s=\mu_{1-s}$ is the reversed
McCann geodesic, which is an absolutely continuous transport $\mu_1^\varepsilon\to\mu_0$, then
$\nabla\check\phi_s=-\nabla\phi_{1-s}$ at the common point, whence $\check U_0=-U^\varepsilon_1$.
The reversed transport \emph{expands} from the round ball $B(p,\varepsilon)$, and the same
rescaling applied to it gives, by Alexandrov for the convex potential $\check g$ of the reversed
limit map,
\[
\check U_0\ =\ \tfrac1\varepsilon\big(\hess\check g\cdot D\exp_p^{-1}\big)+o(\tfrac1\varepsilon)\
\succeq\ \tfrac{c(x)}{\varepsilon}\,\Id\qquad(\mu_0\text{-a.e.}),\qquad c(x)>0 .
\]
Therefore $-U^\varepsilon_1\succeq\tfrac{c(x)}{\varepsilon}\Id$, so
$\big(U^\varepsilon_1\big)^{-1}\preceq\tfrac{\varepsilon}{c(x)}\Id\to0$, and consequently
$\big(U^{\varepsilon\,-1}_1-\Id\big)^{-1}\to(-\Id)^{-1}=-\Id$.
\end{proof}

Next, we show that the same focal degeneration controls the mixed entries of the source Hessian. The proof uses a geometric identity rather than matrix algebra. 

\begin{lem}[the source mixed entries vanish at the focal rate]\label{lem:focalmixed}
In the setting of Lemma~\ref{lem:focalblowup}, write $u^\varepsilon(x):=\uparrow_x^{T^\varepsilon(x)}$
for the ray direction at the source. Then at $\mu_0$-a.e.\ $x$, for every
$w\in(u^\varepsilon)^{\perp}$,
\[
\bigl|\,U^\varepsilon_0(u^\varepsilon,w)\,\bigr|\ =\
\bigl|\,\hess_x\phi^\varepsilon(u^\varepsilon,w)\,\bigr|\ \le\ \|\mathcal A^\varepsilon_1(x)\|\,|w| ,
\]
$\|\cdot\|$ the operator norm. In particular $U^\varepsilon_0(u^\varepsilon,w)\to0$ as
$\varepsilon\to0$ for $\mu_0$-a.e.\ $x$, by Lemma~\ref{lem:focalblowup}. The same identity
gives $\bigl|U^\varepsilon_0(u^\varepsilon,u^\varepsilon)+1\bigr|\le\|\mathcal A^\varepsilon_1(x)\|$,
so the tangential entry of $\hess_x(-\phi^\varepsilon)$ tends to $1$.
\end{lem}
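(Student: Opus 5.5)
The identity to aim for is $\nabla\phi^\varepsilon(x)=-\log_x T^\varepsilon(x)$. Write $\ell:=d(x,T^\varepsilon(x))$, so that $\nabla\phi^\varepsilon(x)=-\ell\,u^\varepsilon$ with the sign convention of the focusing transport, where $U_0=\hess(\text{McCann potential})$. Differentiating this in $x$ gives
\[
\mathcal A^\varepsilon_1(x)=DT^\varepsilon(x)=D\exp_x(-\nabla\phi^\varepsilon)\circ(\Id+\hess_x\phi^\varepsilon)
\]
in the appropriate sense. The differential of $\exp$ along the ray direction and its differential in the base point can be controlled as follows. In the tangential direction everything is exact, because the ray is a minimizing geodesic and CBB spaces are non-branching. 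I will therefore work directly with the intrinsic quantity $d(\cdot,T^\varepsilon(x))$ rather than with Jacobi fields.

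The argument has four steps.

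\textbf{Step 1 (first-order Taylor expansion of the map).} Fix $x$ in the full-measure set where $\phi^\varepsilon$ admits the second-order expansion \eqref{eq:hessdefprelim}, $T^\varepsilon$ is differentiable, and $x$ is regular. For $w\in T_xX$ and small $h$, expand $T^\varepsilon(\exp_x(hw))$ to first order. This gives $\mathcal A^\varepsilon_1 w$ as a vector in $T_{T^\varepsilon(x)}X$, and consequently
\[
\bigl|d\bigl(T^\varepsilon(\exp_x hw),\,T^\varepsilon(x)\bigr)\bigr|\le h\,\|\mathcal A^\varepsilon_1\|\,|w|+o(h).
\]

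\textbf{Step 2 (the distance function on the ray).} Use the $c$-concavity identity: the function $y\mapsto\phi^\varepsilon(y)-\tfrac12 d(y,T^\varepsilon(y))^2$ is $c$-related to the target potential, so $\phi^\varepsilon(y)+\psi(T(y))=\tfrac12 d^2(y,T(y))$, with $\le$ off the graph. Comparing the expansions at $y=\exp_x(hw)$ against the fixed target $z=T^\varepsilon(x)$ and against the moving target $T(y)$ sandwiches the first variation of $\nabla\phi^\varepsilon$:
\[
D(\nabla\phi^\varepsilon)(w)-D_y\bigl(-\log_y z\bigr)(w)=O\bigl(\|\mathcal A^\varepsilon_1\|\,|w|\bigr).
\]
Here $D_y(-\log_y z)$ is, in the support sense, $-\hess\tfrac12 d^2(\cdot,z)$. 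Pairing this with $u^\varepsilon$ reduces the claim to two facts about $f:=\tfrac12 d^2(\cdot,z)$ at $x$:
\[
\hess f(u^\varepsilon,u^\varepsilon)=1,\qquad \hess f(u^\varepsilon,w)=0\ \text{ for } w\perp u^\varepsilon .
\]

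\textbf{Step 3 (the two facts about $f$).} Along the geodesic through $x$ towards $z$, which extends backwards a little since $x$ is a.e. interior to a ray, $f$ is exactly $\tfrac12(\ell-s)^2$; this gives the tangential entry $1$. For the mixed entry, the gradient of $f$ along that geodesic is $-(\ell-s)\dot\gamma$, so its $w$-component vanishes identically. Differentiating this along the geodesic kills $\hess f(u,w)$. This is the statement that $u$ is an eigenvector of $\hess f$, i.e. the eikonal identity $|\nabla d|=1$ differentiated once.

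\textbf{Step 4 (conclusion).} Combining Steps 2 and 3 gives $|U^\varepsilon_0(u^\varepsilon,w)|\le\|\mathcal A^\varepsilon_1\|\,|w|$ and $|U^\varepsilon_0(u^\varepsilon,u^\varepsilon)+1|\le\|\mathcal A^\varepsilon_1\|$. Lemma \ref{lem:focalblowup} gives $\mathcal A^\varepsilon_1\to0$, hence the limits.

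\textbf{Main obstacle.} The hard step is Step 3 in the nonsmooth setting. $f$ is only semiconcave, and the mixed second derivative is not obviously defined at the given point $x$. My plan is to avoid a second derivative of $f$ at $x$ altogether. I would test the exact eikonal identity $\bigl|\nabla\tfrac12 d^2(\cdot,z)\bigr|^2=d^2(\cdot,z)$ on points $\exp_x(hw)$, using first-order information only, and extract the mixed term from the one-sided semiconcavity bound plus the exact tangential equality. If this fails, I would instead argue directly with the gradient-curve characterization $\dot\gamma=-\nabla\theta_s$ from Section \ref{sec:prelim}.
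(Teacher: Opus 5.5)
\textbf{There is a genuine gap in Steps 2--3.}

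Your reduction differentiates the \emph{vector} identity $\nabla\phi^\varepsilon(y)=\log_yT^\varepsilon(y)$. The base-point derivative of $\log_y z$ that appears is the Hessian at $x$ of $f=\tfrac12d^2(\cdot,z)$, with $z=T^\varepsilon(x)$. Nothing in the setting provides that Hessian:
\begin{enumerate}
\item The good points $x$ are those where $\phi^\varepsilon$ is twice differentiable and $T^\varepsilon$ is differentiable.
\item $x\in\operatorname{Reg}f$ holds almost everywhere only for a \emph{fixed} $z$, whereas here $z$ moves with $x$.
\item The smooth substitute ($T(x)$ is a.e.\ not a cut point of $x$) is not available here.
\end{enumerate}
So you cannot invoke $\hess f(u,u)=1$ and $\hess f(u,w)=0$ at $x$; Lemma~\ref{lem:radialhess} gives them only for $x\in\operatorname{Reg}f$. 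For the same reason, the ``sandwich'' of Step~2 does not produce a bilinear object at $x$.

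Your fallback does not close the gap either.
\begin{enumerate}
\item The eikonal identity for $f$ yields $\hess f(u,w)$ only if $\nabla f$ is differentiable at $x$, which is again second-order information on $f$.
\item Suppose instead you move the polarisation argument of Lemma~\ref{lem:radialhess}(b) to $-\phi^\varepsilon$, which is twice differentiable at $x$. Touching by $f$ plus hinge comparison gives only the upper bounds $\hess(-\phi^\varepsilon)(w_\alpha,w_\alpha)\le\cos^2\alpha+\lambda_\kappa\sin^2\alpha$. The tangential entry is no longer exactly $1$, and any lower bound on it must come from the moving target $T^\varepsilon(y)$. Even granting $\hess(-\phi^\varepsilon)(u,u)=1-\eta$, polarisation gives only $|U^\varepsilon_0(u,w)|\le C\sqrt\eta$, not a linear bound.
\end{enumerate}

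Two smaller points.
\begin{enumerate}
\item The constant $1$ in the statement does not follow from an $O(\|\mathcal A^\varepsilon_1\|\,|w|)$ bound on the full vector difference. Even in the smooth case that difference is $-(D_z\log_xz)\mathcal A^\varepsilon_1w$, and $\|D_z\log_xz\|\ne1$ in general. Only after pairing with $u^\varepsilon$ does the Gauss lemma reduce it to $\langle\nu,\mathcal A^\varepsilon_1w\rangle$ with $|\nu|=1$.
\item $x$ is the source endpoint of its ray, not an interior point, and CBB geodesics need not extend backwards. This is harmless, since $s\downarrow0$ suffices for the tangential entry.
\end{enumerate}

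\textbf{The missing idea, which is the paper's proof,} is to differentiate the \emph{scalar} $\rho(x):=d(x,T^\varepsilon(x))=|\nabla\phi^\varepsilon(x)|$ instead of the vector field, and to do so in two ways.
\begin{enumerate}
\item Through $\phi^\varepsilon$, which \emph{is} twice differentiable at $x$: the chain rule $\nabla\tfrac12|\nabla\phi^\varepsilon|^2=\hess\phi^\varepsilon\,\nabla\phi^\varepsilon$ gives $\nabla\rho=U^\varepsilon_0u^\varepsilon$.
\item Through the distance in both slots, which needs only \emph{first} derivatives of $d$ at $(x,T^\varepsilon(x))$: this gives $\nabla\rho=-u^\varepsilon+(\mathcal A^\varepsilon_1)^*\nu$, where $\nu:=(\nabla_2d)(x,T^\varepsilon(x))$ is a unit vector.
\end{enumerate}
Equating the two gives $(U^\varepsilon_0+\Id)u^\varepsilon=(\mathcal A^\varepsilon_1)^*\nu$. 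Pairing with $w\perp u^\varepsilon$, or with $w=u^\varepsilon$, gives both inequalities with constant exactly $1$.

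This is the once-differentiated eikonal identity you were after, but applied to $\phi^\varepsilon$ with the moving target rather than to $f$. As a result, no Hessian of $d^2(\cdot,z)$ at $x$ is ever needed. Where $f$ does happen to be twice differentiable at $x$, your route with the pairing done first reproduces the same identity.
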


\begin{proof}
Work at a point $x$ where $\phi^\varepsilon$ is twice differentiable, $T^\varepsilon$ is
differentiable with differential $\mathcal A^\varepsilon_1=DT^\varepsilon$, the geodesic
$[x\,T^\varepsilon(x)]$ is unique and $x,T^\varepsilon(x)$ are regular --- a set of full
$\mu_0$-measure (Lemma~\ref{lem:focalblowup}(iii)). Put
$\rho(x):=d(x,T^\varepsilon(x))=|\nabla\phi^\varepsilon(x)|$, the McCann displacement
length, so $\nabla\phi^\varepsilon=\rho\,u^\varepsilon$. We compute $\nabla\rho$ twice.

\noindent \emph{First,} from $\tfrac12\rho^2=\tfrac12|\nabla\phi^\varepsilon|^2$ and the chain rule
$\nabla\bigl(\tfrac12|\nabla\phi^\varepsilon|^2\bigr)=\hess\phi^\varepsilon\,\nabla\phi^\varepsilon$,
\[
\rho\,\nabla\rho=U^\varepsilon_0\,\nabla\phi^\varepsilon=U^\varepsilon_0(\rho\,u^\varepsilon),
\qquad\text{i.e.}\qquad \nabla\rho=U^\varepsilon_0\,u^\varepsilon .
\]
\emph{Second,} differentiating $\rho(x)=d(x,T^\varepsilon(x))$ through both slots ($d$
jointly differentiable at $(x,T^\varepsilon(x))$ since the geodesic is unique, $T^\varepsilon$
differentiable at $x$),
\[
\nabla\rho=(\nabla_1 d)(x,T^\varepsilon(x))+(\mathcal A^\varepsilon_1)^{*}(\nabla_2 d)(x,T^\varepsilon(x))
=-\,u^\varepsilon+(\mathcal A^\varepsilon_1)^{*}\nu ,
\]
where $(\nabla_1 d)(x,y)=-\uparrow_x^y=-u^\varepsilon$ and $\nu:=(\nabla_2 d)(x,T^\varepsilon(x))$
is a \emph{unit} vector (first variation at the regular endpoints). Equating the two
expressions,
\[
\bigl(U^\varepsilon_0+\Id\bigr)u^\varepsilon=(\mathcal A^\varepsilon_1)^{*}\nu .
\]
Pairing with any $w$ and using $|\nu|=1$ gives
$\langle(U^\varepsilon_0+\Id)u^\varepsilon,w\rangle=\langle\nu,\mathcal A^\varepsilon_1 w\rangle$,
hence $|\langle(U^\varepsilon_0+\Id)u^\varepsilon,w\rangle|\le\|\mathcal A^\varepsilon_1\|\,|w|$.
Taking $w\perp u^\varepsilon$ kills the term $\langle u^\varepsilon,w\rangle$ and leaves
$|U^\varepsilon_0(u^\varepsilon,w)|\le\|\mathcal A^\varepsilon_1\|\,|w|$; taking
$w=u^\varepsilon$ gives $|U^\varepsilon_0(u^\varepsilon,u^\varepsilon)+1|\le\|\mathcal A^\varepsilon_1\|$.
The convergence is Lemma~\ref{lem:focalblowup}, $\mathcal A^\varepsilon_1\to0$ $\mu_0$-a.e.
\end{proof}

The passage from the Riccati bounds, which live along the rays of the focusing
transports, to the squared-distance bound \eqref{eq:condC}, which concerns every
geodesic, requires two further lemmas: the exact radial structure of the Hessian of
the squared distance, which shows that the tensorial content of \eqref{eq:condC} is
carried entirely by the normal block, and an upgrade lemma converting almost-everywhere
Hessian bounds into support-sense bounds along \emph{every} geodesic.

\begin{lem}[radial structure of the squared distance]\label{lem:radialhess}
Let $p\in X$, $f:=\tfrac12d^2(\cdot,p)$, and let $x\in\operatorname{Reg}f$, $x\neq p$.
Then the geodesic $[xp]$ is unique; write $d:=d(x,p)$ and $u:=\uparrow_x^p\in T_xX$ for
its direction. One has $\nabla f(x)=-d\,u$, and:
\begin{enumerate}
\item[(a)] $\hess_xf(u,u)=1$ \emph{exactly};
\item[(b)] $\hess_xf(u,w)=0$ for every $w\perp u$;
\item[(c)] $\hess_xf(w,w)\le\lambda_\kappa(d)\,|w|^2$ for every $w\perp u$,
\end{enumerate}
where 
$\lambda_\kappa(d):=\sqrt{|\kappa|}\,d\,\coth\bigl(\sqrt{|\kappa|}\,d\bigr)$ for
$\kappa<0$ (and $\lambda_0(d):=1$).
In particular, in the orthogonal splitting $T_xX=\R u\oplus u^\perp$ the Hessian of
$f$ at $x$ is block-diagonal with tangential entry exactly $1$, so that
$\hess_xf\preceq\Id$ holds if and only if the normal block satisfies
$\hess_xf|_{u^\perp}\preceq\Id_{u^\perp}$.
\end{lem}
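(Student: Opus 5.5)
We work at a point $x\in\operatorname{Reg}f$, $x\neq p$, so that $f$ admits the second-order expansion \eqref{eq:hessdefprelim} at $x$ and $T_xX\cong\R^{\dd}$. The plan is to argue by first and second variation along and across the segment $[xp]$.

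\emph{Uniqueness and gradient.} For the uniqueness of $[xp]$, recall that differentiability of $f$ at $x$ forces the first variation formula $d_xf(w)=-d\,\min_{\xi}\langle\xi,w\rangle$, the minimum running over the set $\Uparrow_x^p$ of directions of geodesics from $x$ to $p$. If this set contained two distinct directions $\xi_1\neq\xi_2$, the map $w\mapsto d_xf(w)$ would be the negative of a maximum of two distinct linear forms. Such a map is not linear on $T_xX=\R^{\dd}$, contradicting differentiability. Hence $\Uparrow_x^p=\{u\}$, the geodesic is unique, and $\nabla f(x)=-d\,u$.

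\emph{Parts (a) and (b).} For (a), we move along the geodesic itself. Put $\sigma(\varepsilon)=\exp_x(\varepsilon u)$ for $|\varepsilon|$ small; this is two-sided because the geodesic $[xp]$ extends slightly beyond $x$ backwards. The extension follows from $x$ being regular, so the direction $-u$ exists in $T_xX\cong\R^{\dd}$; via the splitting $T_xX=L\times\R u$ and non-branching, a segment from $p$ through $x$ prolongs a little past $x$ for a.e.\ such $x$. If the prolongation is not available, I would instead use only $\varepsilon\ge0$, which already suffices for the diagonal value $\hess_xf(u,u)$ by the expansion. Along the segment, $f(\sigma(\varepsilon))=\tfrac12(d-\varepsilon)^2$ exactly, and comparing with \eqref{eq:hessdefprelim} gives $\hess_xf(u,u)=1$. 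For (b), the same computation along $[xp]$ applies at nearby points: $\nabla f(\sigma(\varepsilon))=-(d-\varepsilon)u_\varepsilon$ with $u_\varepsilon$ the direction of the ray. Equivalently, use the eikonal identity $|\nabla f|^2=2f$, valid wherever $f$ is differentiable. Differentiating it at the twice-differentiable point $x$ in direction $w$ gives $2\hess_xf(\nabla f,w)=2\,d_xf(w)$, that is $-d\,\hess_xf(u,w)=-d\langle u,w\rangle=0$ for $w\perp u$. The main obstacle lies here: justifying the differentiation of $|\nabla f|^2$ at a merely Alexandrov-twice-differentiable point. I would handle it by the Taylor expansion of the gradient, which holds at points of $\operatorname{Reg}f$ in DC charts \cite{Perelman_DC}, together with the identity $|\nabla f|^2=2f$ holding a.e.\ and, by semiconcavity, along the expansion.

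\emph{Part (c) and the final equivalence.} Part (c) is the standard Toponogov hinge comparison in $\mathrm{CBB}(\kappa)$. For $w\perp u$ and a geodesic $\sigma(\varepsilon)=\exp_x(\varepsilon w)$, the hinge comparison at $x$ with angle $\pi/2$ gives $d(\sigma(\varepsilon),p)\le\tilde d_\kappa(\varepsilon)$, the model distance in $M^2_\kappa$. Expanding $\tfrac12\tilde d_\kappa^2=\tfrac12d^2+\tfrac12\lambda_\kappa(d)\varepsilon^2|w|^2+o(\varepsilon^2)$ and using (b) so that the linear term vanishes, we obtain $\hess_xf(w,w)\le\lambda_\kappa(d)|w|^2$. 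This holds on the dense set of geodesic directions, hence everywhere by continuity of the quadratic form. Finally, (a) and (b) make $\hess_xf$ block-diagonal with tangential entry $1$. Therefore $\hess_xf\preceq\Id$ reduces exactly to the normal-block inequality $\hess_xf|_{u^\perp}\preceq\Id_{u^\perp}$.
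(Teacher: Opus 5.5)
Your uniqueness argument, the formula $\nabla f(x)=-d\,u$ and part (a) are fine and match the paper. For (a), the one-sided expansion with $\varepsilon\downarrow0$ is all that is needed, so you can drop the prolongation remark. The first variation formula reads $d_xf(w)=-d\max_{\xi\in\Uparrow_x^p}\langle\xi,w\rangle$, with a max rather than a min, but your conclusion is unaffected.

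\textbf{The gap is in (b).} Differentiating $|\nabla f|^2=2f$ at $x$ to obtain $\hess_xf(\nabla f(x),\cdot)=d_xf$ is a product rule $d(|\nabla f|^2)=2\hess f(\nabla f,\cdot)$. That rule needs two things at the specific point $x$: a differentiable metric, and the identification of the Hessian defined by the expansion \eqref{eq:hessdefprelim} with the covariant derivative of $\nabla f$ for a metric-compatible connection.
\begin{itemize}
\item Neither is available at a point of $\operatorname{Reg}f$ in an Alexandrov space. In DC charts the metric is only $BV$ and the Christoffel symbols are measures, as recalled in the preliminaries.
\item Comparing $\nabla f(y)\in T_yX$ with $\nabla f(x)\in T_xX$ already presupposes an identification you do not have.
\item Your remedy (a Taylor expansion of the coordinate gradient in DC charts, plus $|\nabla f|^2=2f$ a.e.) controls the coordinate derivatives of $f$ but not the variation of $g^{ij}$, so it does not close the argument.
\item The variant ``covariantly differentiate $-(d-\varepsilon)u_\varepsilon$ along the ray'' has the same defect.
\end{itemize}

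\textbf{The missing idea:} (b) comes from the hinge comparison you already use for (c), provided you apply it at every angle rather than only at $\pi/2$.
\begin{enumerate}
\item Take a geodesic direction $w$ at angle $\alpha$ to $u$, and let $m_\kappa$ be the model hinge side. The $\mathrm{CBB}(\kappa)$ comparison gives $d(\exp_x(sw),p)\le m_\kappa(s)$.
\item Since $d_xf(w)=-d\cos\alpha$ matches the linear coefficient of $\tfrac12m_\kappa^2$, comparing second-order terms gives $\hess_xf(w,w)\le\cos^2\alpha+\lambda_\kappa(d)\sin^2\alpha$.
\item Both sides are continuous in $w$ and geodesic directions are dense in $\Sigma_x$, so this bound holds for every unit $w$.
\item Insert $w=\cos\alpha\,u\pm\sin\alpha\,v$, with $v\perp u$ a unit vector, and use (a). This gives $\pm2\cos\alpha\sin\alpha\,\hess_xf(u,v)\le\sin^2\alpha\bigl(\lambda_\kappa(d)-\hess_xf(v,v)\bigr)$.
\item Divide by $\sin\alpha$ and let $\alpha\downarrow0$ to get $\hess_xf(u,v)=0$.
\end{enumerate}
This is the paper's route. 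It uses only the one-sided comparison and the exact value from (a): the upper bound is attained at $\alpha=0$, so the first-order term in $\alpha$ must vanish.

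\textbf{The same fix repairs a second gap, in (c).} You apply the hinge comparison only to geodesic directions exactly orthogonal to $u$ and then invoke density. Density of geodesic directions in $\Sigma_x$ says nothing about their density in the null equator $u^\perp\cap\Sigma_x$. Extending the angle-dependent bound by continuity and then setting $\alpha=\pi/2$ avoids this.

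Also, in (c) the linear term vanishes because $\nabla f(x)=-d\,u$ and $w\perp u$, not because of (b).
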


\begin{proof}
Differentiability of $f$ at $x$ forces uniqueness of the geodesic $[xp]$, and the first variation formula gives $d_xf(w)=-d\,\<u,w\>$ for every geodesic
direction $w$, hence for all $w\in T_xX$ by density of geodesic directions and
linearity of the differential at $x\in\operatorname{Reg}f$; thus $\nabla f(x)=-d\,u$.

(a) Along the geodesic to $p$ one has, \emph{exactly},
$f(\exp_x(su))=\tfrac12(d-s)^2=f(x)-s\,d+\tfrac{s^2}2$ for $0\le s\le d$, subsegments
of $[xp]$ being minimizing. Matching against the second-order expansion
\eqref{eq:hessdefprelim} at $x$ along $w=su$, $s\downarrow0$ gives $\hess_xf(u,u)=1$.

(b)--(c) Let $w_\alpha$ be a geodesic direction at angle $\alpha\in[0,\pi]$ to $u$, and
let $m_\kappa(s)$ be the side of the $\kappa$-model hinge with sides $d$, $s$ and
angle $\alpha$ between them. The $\mathrm{CBB}(\kappa)$ hinge comparison gives
\[
d\bigl(\exp_x(sw_\alpha),\,p\bigr)\ \le\ m_\kappa(s)\qquad(s\ \text{small}),
\]
with equality of values and first derivatives at $s=0$: $m_\kappa(0)=d$,
$m_\kappa'(0)=-\cos\alpha$, matching the first variation. For $\kappa=-1$ (the
general case follows by scaling) the model law of cosines
$\cosh m=\cosh d\cosh s-\sinh d\sinh s\cos\alpha$ yields, differentiating twice at
$s=0$, $m''(0)=\coth(d)\sin^2\alpha$, whence
\begin{align*}
\Bigl(\tfrac{m^2}2\Bigr)''(0)&=m'(0)^2+m(0)\,m''(0)\\
&=\cos^2\alpha+d\coth(d)\sin^2\alpha\\
&=\cos^2\alpha+\lambda_{-1}(d)\sin^2\alpha .
\end{align*}
Comparing the expansion \eqref{eq:hessdefprelim} of $f$ along $w_\alpha$ with that of
$\tfrac12m_\kappa^2$ gives, for every geodesic direction, and then by density of geodesic directions and continuity of both sides in the direction for \emph{every} unit vector $w_\alpha$ at angle $\alpha$ to $u$,
\begin{equation}\label{eq:hingehess}
\hess_xf(w_\alpha,w_\alpha)\ \le\ \cos^2\alpha+\lambda_\kappa(d)\sin^2\alpha .
\end{equation}
Taking $\alpha=\tfrac\pi2$ gives (c). For (b), fix a unit $v\perp u$ and apply
\eqref{eq:hingehess} to $w_\alpha^\pm:=\cos\alpha\,u\pm\sin\alpha\,v$: expanding the
left-hand side bilinearly and using (a),
\[
\pm\,2\cos\alpha\sin\alpha\,\hess_xf(u,v)\ \le\
\sin^2\alpha\,\bigl(\lambda_\kappa(d)-\hess_xf(v,v)\bigr),
\]
and dividing by $\sin\alpha>0$ and letting $\alpha\downarrow0$ (the right-hand side
tends to $0$, $\hess_xf(v,v)$ being finite at $x\in\operatorname{Reg}f$) gives
$\pm\hess_xf(u,v)\le0$, i.e.\ $\hess_xf(u,v)=0$.
\end{proof}

\begin{lem}[limit upgrade: a.e.\ Hessian bounds to support-sense
bounds]\label{lem:limitupgrade}
Let $\Omega\subset X$ be open and relatively compact. For $j\in\mathbb N$ let
$h_j:\Omega\to\R$ be $\lambda_0$-concave (one common $\lambda_0$) with $h_j\to h$
locally uniformly, and let $c_j:\Omega\to\R$ be measurable with $|c_j|\le M$,
$c_j\to c$ $\mu_H$-a.e.\ on $\Omega$, $c$ continuous. Assume that for every $j$
\[
\hess_zh_j\ \preceq\ c_j(z)\,\Id\qquad\text{at $\mu_H$-a.e.\ }z\in\Omega,
\]
in the sense of the a.e.\ expansion \eqref{eq:hessdefprelim}. Then for \emph{every}
unit-speed geodesic $\sigma:[0,L]\to\Omega$,
\[
(h\circ\sigma)''\ \le\ c(\sigma(s))\,ds\qquad\text{as measures on }(0,L).
\]
\end{lem}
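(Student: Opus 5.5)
The plan is to convert the a.e.\ Hessian bounds into a one-dimensional statement along $\sigma$ by perturbation and averaging, and then pass to the limit in $j$.

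\emph{Reduction to a fixed $j$, then a limit.} For fixed $j$, I would first show that $(h_j\circ\sigma)''\le c_j\circ\sigma$ in a weak sense. Then I would use $h_j\to h$ uniformly, which gives $(h_j\circ\sigma)''\to(h\circ\sigma)''$ as distributions. The catch is that $c_j\circ\sigma$ need not converge, since $\sigma$ is a null set and $c_j\to c$ only a.e. So I would not prove the inequality along $\sigma$ itself. Instead I would prove it averaged over a small tube of nearby geodesics $\sigma_y$, with $y$ ranging over a small ball. On that family $c_j\to c$ a.e.\ by Fubini, and dominated convergence (using $|c_j|\le M$) lets us pass $j\to\infty$. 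Finally I would shrink the tube, using continuity of $c$ and of $h$ to recover the bound for $\sigma$ itself.

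\emph{Averaging step.} Fix a test function $0\le\chi\in C_c^\infty(0,L)$ and a small $\delta$. Following the Petrunin/Perelman DC-calculus strategy, I would realise the family of geodesics as follows. Take $\sigma$ as a subsegment of a geodesic extended slightly. For $y$ near $\sigma(0)$, let $\sigma_y$ be the geodesic from $y$ to a point $y'$ near $\sigma(L)$. The ``geodesic flow'' maps $(y,y',s)\mapsto\sigma_{y,y'}(s)$ push the product Hausdorff measure forward to a measure that is absolutely continuous with respect to $\mu_H$, with bounded density on compacts. This is a Bishop–Gromov / Cheeger-type segment inequality, valid in $\mathrm{CBB}(\kappa)$, and it is the analogue of Lemma~\ref{lem:nullrays}. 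Because of it, for a.e.\ $(y,y')$ the set of $s$ with $\sigma_{y,y'}(s)$ a bad point is null. Along such a geodesic, $h_j\circ\sigma_{y,y'}$ is $\lambda_0$-concave, so its second derivative is a measure whose absolutely continuous part equals $\hess h_j(\dot\sigma,\dot\sigma)\le c_j$ a.e.\ (pointwise second-order expansion). Its singular part is $\le0$, by concavity of $h_j\circ\sigma-\lambda_0 s^2/2$. Hence
\[
\int h_j(\sigma_{y,y'}(s))\chi''(s)\,ds\le\int c_j(\sigma_{y,y'}(s))\chi(s)\,ds
\]
for a.e.\ $(y,y')$. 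Averaging over $y\in B(\sigma(0),\delta)$ and $y'\in B(\sigma(L),\delta)$, then letting $j\to\infty$ with dominated convergence on both sides, gives the averaged inequality for $h$ and $c$.

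\emph{Shrinking the tube.} As $\delta\to0$, the geodesics $\sigma_{y,y'}$ converge uniformly to $\sigma$. The subtle point is that $\sigma$ must be the unique geodesic between its endpoints; this holds after restricting to an interior subsegment, because of non-branching. Then continuity of $h$ and $c$ yields $\int h(\sigma)\chi''\le\int c(\sigma)\chi$, which is the claim on every interior subinterval, hence on $(0,L)$.

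\emph{Main obstacle.} I expect the difficulty to be the absolute continuity of the pushforward of $\mu_H\otimes\mu_H\otimes ds$ under the geodesic-interpolation map. This is the step that lets an a.e.\ statement on $X$ become an a.e.\ statement along a.e.\ geodesic of the family. I would try first to invoke the measure contraction property of $\mathrm{CBB}(\kappa)$ spaces, contracting toward $y'$ from the ball around $y$. That gives a density bound with explicit constants depending on $\kappa$ and the diameter, and it avoids any smoothness of the geodesic family.
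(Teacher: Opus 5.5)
Your proposal is correct, and its skeleton matches the paper's. The shared steps are: the one-dimensional structure of $(h_j\circ\varsigma)''$ (absolutely continuous part equal to the a.e.\ Taylor coefficient, singular part $\le0$); a family of geodesics almost all of which meet a fixed $\mu_H$-null set only in a null set of times; the passage $j\to\infty$; and approximation of an interior subsegment $\sigma|_{[s_1,s_2]}$ by nearby geodesics, using non-branching uniqueness and continuity of $h$ and $c$. Geodesics need not extend in an Alexandrov space, so drop the ``extended slightly'' step and work directly with $\sigma|_{[s_1,s_2]}$, as you do later.

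The genuine difference is the measure-theoretic device.

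The paper does not average. It puts the non-convergence set $\{c_j\not\to c\}$ into the exceptional null set $N$ from the start. Then $c_j\to c$ holds at a.e.\ time along a single good geodesic, and dominated convergence applies there directly. Your diagnosis that $c_j\circ\sigma$ may fail to converge is right, but excluding $\{c_j\not\to c\}$ already cures it, so averaging is not needed for that purpose. The paper obtains good geodesics near $\sigma$ as rays of the optimal transport between normalised restrictions of $\mu_H$ to two small balls around $\sigma(s_1),\sigma(s_2)$. For these rays, absolute continuity of the interpolants (Lemma~\ref{lem:intermediateac}, via Petrunin's Lipschitz bound on gradient-flow maps) and the Fubini argument of Lemma~\ref{lem:nullrays} are already in the paper.

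Your route over the full product family $(y,y')$ costs two things. You need an external input: the measure contraction property of $\mathrm{CBB}(\kappa)$ spaces (Ohta; Kuwae--Shioya), contracting toward each endpoint for its half of the parameter range. This is what gives a bounded density for the push-forward of $\mu_H\otimes\mu_H\otimes ds$. You also need a.e.\ uniqueness and a measurable selection of the geodesics $\sigma_{y,y'}$. In exchange you get a quantitative segment-type inequality and a statement averaged over a whole tube, instead of one selected ray per radius.

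Two bookkeeping points. First, $\sigma_{y,y'}$ is parametrised on a fixed interval, so its speed is not $1$. The right-hand side should therefore carry the factor $|\dot\sigma_{y,y'}|^2$, which tends to $1$ as $\delta\to0$; the $\lambda_0$-concavity constant should be rescaled likewise. Second, $\delta$ must be fixed small enough, before letting $j\to\infty$, that all $\sigma_{y,y'}$ lie in one compact subset of $\Omega$. This follows from the same compactness--uniqueness argument you use in the shrinking step.
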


Note that the constant sequence $h_j\equiv h$, $c_j\equiv c$ is allowed: an a.e.\ Hessian bound for a single semiconcave function upgrades to the support-sense bound along every geodesic. 

\begin{proof}
\emph{1D facts.} For a $\lambda_0$-semiconcave function $g$ on an interval, the
distributional second derivative is a measure $g''=g''_{ac}\,ds+g''_{s}$ with singular
part $g''_{s}\le0$, and at a.e.\ $s$ the density $g''_{ac}(s)$ coincides with the
second coefficient of the Taylor expansion of $g$ at $s$ (the one-dimensional Alexandrov theorem).

\emph{Good geodesics.} Let $N\subset\Omega$ be the $\mu_H$-null set off which the
hypotheses hold simultaneously for all $j$:
$$
N:=\bigcup_j\{z:\hess_zh_j\ \text{undefined or}\ \not\preceq c_j(z)\Id\}\cup
\{z:c_j(z)\not\to c(z)\},
$$
a countable union of null sets. Call a unit-speed geodesic
$\varsigma$ \emph{good} if $\varsigma(s)\notin N$ for a.e.\ $s$. If $\varsigma$ is
good then, for each $j$ and a.e.\ $s$, the expansion \eqref{eq:hessdefprelim} of $h_j$
at $\varsigma(s)$, restricted along $\varsigma(s+t)=\exp_{\varsigma(s)}(t\dot\varsigma)$,
shows that the second coefficient of the Taylor expansion of $h_j\circ\varsigma$ at $s$ equals
$\hess_{\varsigma(s)}h_j(\dot\varsigma,\dot\varsigma)\le c_j(\varsigma(s))$; by the 1D
facts,
\begin{equation}\label{eq:goodgeodineq}
(h_j\circ\varsigma)''\ \le\ c_j(\varsigma(s))\,ds
\qquad\text{as measures, for every good }\varsigma\ \text{and every }j .
\end{equation}

\emph{Approximation of an arbitrary geodesic by good ones.} Fix $\sigma$ and
$0<s_1<s_2<L$. For small $r>0$ consider the optimal transport between the normalised
restrictions of $\mu_H$ to $B(\sigma(s_1),r)$ and $B(\sigma(s_2),r)$. By
Lemma~\ref{lem:intermediateac} its interpolant measures are absolutely continuous on
compact interior time ranges, so by Lemma~\ref{lem:nullrays} (applied to the
time-independent null set $N$) almost every ray $\varsigma^{(r)}$ of this transport is
good; choose one for each $r$. Its endpoints lie in the two balls, so they converge to
$\sigma(s_1),\sigma(s_2)$ as $r\to0$; the rays have uniformly bounded speed, hence are
precompact (Arzel\`a--Ascoli), and every limit is a geodesic from $\sigma(s_1)$ to
$\sigma(s_2)$. Since $\sigma(s_1),\sigma(s_2)$ are \emph{interior} points of $\sigma$,
that geodesic is unique: a second one would produce, by concatenation with
$\sigma|_{[0,s_1]}$ and $\sigma|_{[s_2,L]}$, a branching pair of geodesics, impossible
in $\mathrm{CBB}$. So $\varsigma^{(r)}\to\sigma|_{[s_1,s_2]}$ uniformly, after the
affine reparametrisations to unit speed, whose factors tend to $1$. For $r$ small the
$\varsigma^{(r)}$ stay in $\Omega$, by the same compactness argument.

\emph{Passage to the limit.} Fix $r$ and let $j\to\infty$ in
\eqref{eq:goodgeodineq} along $\varsigma^{(r)}$: testing against
$0\le\chi\in C^\infty_c$, the left side converges by uniform convergence of
$h_j\circ\varsigma^{(r)}$, the right side by dominated convergence, hence $(h\circ\varsigma^{(r)})''\le c(\varsigma^{(r)}(s))\,ds$.
Now let $r\to0$: $h\circ\varsigma^{(r)}\to h\circ\sigma$ and
$c\circ\varsigma^{(r)}\to c\circ\sigma$ uniformly on $[s_1,s_2]$, so the distributional inequality
passes to the limit: $(h\circ\sigma)''\le c(\sigma(s))\,ds$ on $(s_1,s_2)$.
Exhausting $(0,L)$ finishes the proof.
\end{proof}

We can now establish the squared-distance comparison.

\begin{prop}\label{prop:MDCimpliesC}
Let $(X,d)$ be a finite-dimensional $\mathrm{CBB}(\kappa)$ Alexandrov space,
$\kappa>-\infty$. Assume that for all $\mu_0,\mu_1\in\PR$ there exists, along
$\mu_H$-a.e.\ transport ray, an admissible parallel trivialisation in which block-diagonal matrix displacement convexity holds. Then \eqref{eq:condC} holds: for every $p\in X$ and every geodesic, $\hess\big(\tfrac12 d^2(\cdot,p)\big)\preceq\Id$ in the support sense.
\end{prop}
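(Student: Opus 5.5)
The plan is to apply the backward Riccati comparison, Lemma~\ref{lem:backwardriccati}, blockwise along the rays of the focusing transports $\mu_0\to\mu_1^\varepsilon$ of Lemma~\ref{lem:focalblowup}. This yields an a.e.\ bound on the normal block of the source Hessian of the Kantorovich potential. We then pass $\varepsilon\to0$ to get an a.e.\ bound on $\hess(\tfrac12d^2(\cdot,p))$, and upgrade it to the support sense along every geodesic by Lemma~\ref{lem:limitupgrade}. We first take $p$ regular and remove this restriction at the end.

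\textbf{Step 1 (Riccati along a ray).} Fix $\mu_0\in\PR$ with density bounded on a small ball $\Omega$, and a regular $p\notin\overline\Omega$. For the transport $\mu_0\to\mu_1^\varepsilon$, take the assumed admissible trivialisation, in which $\hat U^\varepsilon_t=\operatorname{diag}(c_t,V_t)$ solves $\dot{\hat U}=-\hat U^2-\hat\Rs$ with $\hat\Rs\succeq0$ a measure. Boundedness and $BV$ on interior subintervals $[\delta,1-\delta]$ follow from \eqref{eq:twosidedhess} together with the monotonicity that the Riccati inequality itself provides.

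Apply Lemma~\ref{lem:backwardriccati} on $[\delta,1-\delta]$ to each block. At the focal end we need $\hat U^\varepsilon_{1-\delta}\prec0$, with inverse tending to $0$ as $\varepsilon\to0$ and then $\delta\to0$. For this we use Lemma~\ref{lem:focalblowup}. Its conclusion is stated for the full matrix, and it transfers to the blocks because pinching preserves the bound $-U\succeq\frac{c}{\varepsilon}\Id$ (Lemma~\ref{lem:pinching}), so $-\hat U_1\succeq\frac{c}{\varepsilon}\Id$ as well.

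Letting $\delta\to0$, using the right-continuity and bounds of Lemma~\ref{lem:monextension} and \eqref{eq:twosidedhess}, the comparison gives
\[
\hat U^\varepsilon_0\succeq\big((\hat U^\varepsilon_1)^{-1}-\Id\big)^{-1}\longrightarrow-\Id .
\]
In particular the normal block satisfies $V^\varepsilon_0\succeq-\Id-o(1)$, that is, $\hess_x(-\phi^\varepsilon)|_{(u^\varepsilon)^\perp}\preceq(1+o(1))\Id$ for $\mu_0$-a.e.\ $x$. The tangential entry tends to $1$ and the mixed entries tend to $0$ by Lemma~\ref{lem:focalmixed}. Hence the full source Hessian of $h_\varepsilon:=-\phi^\varepsilon$ satisfies $\hess_xh_\varepsilon\preceq c_\varepsilon(x)\Id$ a.e.\ on $\Omega$, with $c_\varepsilon\to1$ a.e. Note that the frame choice is irrelevant at $t=0$: the bound is a Loewner statement about a form on $T_xX$, and it is invariant under the isometry $I_0$.

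\textbf{Step 2 (limit and upgrade).} The potentials $h_\varepsilon$ are infima or suprema of $\tfrac12d^2(\cdot,y)$ over $y\in B(p,\varepsilon)$ up to constants. After normalisation they converge uniformly on $\Omega$ to $\tfrac12d^2(\cdot,p)$, and they are uniformly $\lambda_0$-concave, being $c$-concave. Lemma~\ref{lem:limitupgrade}, with $c_j$ truncated to be bounded by $M$ via \eqref{eq:twosidedhess}, then gives $(\tfrac12d^2(\cdot,p)\circ\sigma)''\le1$ along every geodesic in $\Omega$. Since $\Omega$ and $\mu_0$ are arbitrary, this holds away from $p$. Near $p$ the bound is automatic: there $\tfrac12d^2(\cdot,p)$ along a geodesic through $p$ is controlled by the triangle inequality, and alternatively one approximates.

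Finally, for a general $p$, regular points are dense. Take regular $p_k\to p$. Then $\tfrac12d^2(\cdot,p_k)\to\tfrac12d^2(\cdot,p)$ uniformly, and the distributional inequality $(f\circ\sigma)''\le1$ is closed under uniform limits.

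\textbf{Main obstacle.} The delicate step is the focal end. We must justify that the admissible trivialisation's block matrix at $t\to1$ inherits the isotropic blow-up of Lemma~\ref{lem:focalblowup}, and that the backward comparison may be applied up to the endpoints even though \eqref{eq:twosidedhess} degenerates there.

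I would first try to avoid the endpoints. Apply the comparison on $[0,1-\delta]$, and take $\varepsilon\to0$ before $\delta\to0$. This uses the fact that, after the rescaling of Lemma~\ref{lem:focalblowup}, $-\hat U^\varepsilon_{1-\delta}\succeq(\delta+O(\varepsilon))^{-1}\Id$ roughly, so that $(\hat U^\varepsilon_{1-\delta})^{-1}\to-\delta\Id$. Then
\[
\big((\hat U^\varepsilon_{1-\delta})^{-1}-(1-\delta)\Id\big)^{-1}\to-\Id .
\]
Making this quantitative at intermediate times, rather than only at $t=1$, is where the real work lies.
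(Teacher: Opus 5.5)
Your route is the paper's: Lemma~\ref{lem:backwardriccati} applied blockwise along the rays of the focusing transports $\mu_0\to\mu_1^\varepsilon$, Lemma~\ref{lem:focalblowup} at the focal end, Lemma~\ref{lem:focalmixed} for the discarded mixed entries, and Lemma~\ref{lem:limitupgrade} for the support sense. Several of your details are correct and slightly more explicit than the paper's:
\begin{itemize}
\item transferring the focal bound to the blocks via the unital positivity of the pinching (Lemma~\ref{lem:pinching});
\item the exact formula $\tfrac12(s-s_0)^2$ along geodesics through $p$;
\item the density argument for non-regular $p$, which the paper's proof, written for regular $p$, leaves implicit.
\end{itemize}
However, the step you flag is a genuine gap, and the justification you give for it in Step~1 does not work. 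Lemma~\ref{lem:backwardriccati} needs, on some interval, a bounded $BV$ block solution with the following endpoint values:
\begin{itemize}
\item its right-end value is negative definite;
\item its left-end value is the pinched source Hessian $P(\hess_x\phi^\varepsilon)$.
\end{itemize}
The available inputs do not supply this. Lemma~\ref{lem:focalblowup} controls only the endpoint Hessian at $t=1$. The hypothesis controls $\hat U^\varepsilon_t$ only at a.e.\ interior $t$, in a trivialisation defined on $D_\gamma\subset(0,\infty)$. And \eqref{eq:twosidedhess} degenerates at both ends. Right-continuity as in Lemma~\ref{lem:monextension} merely selects a representative: it says nothing about the left limit at $t=1$, and it does not identify the limit at $t=0^+$ with the source Hessian. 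Monotonicity does not help either. Since $t\mapsto\langle\hat U_t\xi,\xi\rangle$ is nonincreasing, it bounds earlier values from below by later ones, so $\hat U^\varepsilon_1\prec0$ gives no information on the sign of $\hat U^\varepsilon_{1-\delta}$. Without $\lim_{t\to1^-}\hat U^\varepsilon_t=\hat U^\varepsilon_1$ you do not know that $\hat U^\varepsilon_{1-\delta}\prec0$ for any $\delta>0$, so the comparison on $[\delta,1-\delta]$ cannot be started. Symmetrically, without the identification at $t=0^+$ the floor obtained at $t=\delta$ is not a statement about $h_\varepsilon=-\phi^\varepsilon$. Frame invariance under $I_0$ does not address this, since the issue is the limit, not the choice of frame at $0$.

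The paper does not extract this from interior data either. It builds the identification into the normalisation fixed at the start of Subsection~\ref{ss:MDCtoC}: $U_t=\dot{\mathcal A}_t\mathcal A_t^{-1}$ with $\mathcal A_0=\Id$ and $\mathcal A_1=DT^\varepsilon$, under which the one-sided limits at $t=0,1$ are the endpoint Hessians. Combined with monotonicity and the a.e.\ finiteness of the endpoint values, $U^\varepsilon$ is then bounded $BV$ on the closed interval $[0,1]$, and Lemma~\ref{lem:backwardriccati} is applied once on $[0,1]$ with no $\delta$. You need this identification as an explicit input, or a proof of it.

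Your fallback aims at an estimate that is not needed: no rate $-\hat U^\varepsilon_{1-\delta}\succeq(\delta+O(\varepsilon))^{-1}\Id$ is required. If $\hat U_{t_1}\prec0$ for some $t_1$, then $\hat U_{t_1}^{-1}-t_1\Id\prec-t_1\Id$, and Lemma~\ref{lem:backwardriccati} on $[0,t_1]$ gives $-\hat U_0\prec t_1^{-1}\Id$. This is the paper's uniform floor \eqref{eq:uniformfloor}: the diagonal blocks satisfy $-U^\varepsilon_0\prec\Id$ for every $\varepsilon$, and $\varepsilon\to0$ is used only for the mixed entries and the convergence of the potentials. So the missing ingredient is qualitative endpoint information, not a quantitative intermediate-time estimate.

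A minor point: the truncation $c_\varepsilon\le M$ in Step~2 should come from the uniform $\lambda_0$-semiconcavity of $-\phi^\varepsilon$, as in the paper. It cannot come from \eqref{eq:twosidedhess}, which is an interior bound along rays.
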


\begin{proof}
\textbf{Step 1 (Riccati floors at the source).}
Fix a regular point $p$ and the measures $\mu_0,\mu_1^\varepsilon$ of
Lemma~\ref{lem:focalblowup}, with $\mu_0$ the normalised restriction of $\mu_H$ to an
arbitrary ball $B\subset X$. Both are absolutely continuous, so the hypothesis
provides, along $\mu_0$-a.e.\ ray, an admissible parallel trivialisation in which the displacement convexity inequality holds; 
fix it, and work in the corresponding parallel orthonormal frame. 

First the regularity: for fixed
$\xi$, $\tfrac{d}{dt}\<U^\varepsilon_t\xi,\xi\>\le-|U^\varepsilon_t\xi|^2\le0$
distributionally, so $t\mapsto\<U^\varepsilon_t\xi,\xi\>$ is nonincreasing on $(0,1)$;
being also locally bounded there \eqref{eq:twosidedhess}, $U^\varepsilon$ is $BV$ with one-sided limits everywhere, and
its monotone limits at $t=0,1$ are bounded by the endpoint values, which are finite
$\mu_0$-a.e.\ by Lemma~\ref{lem:focalblowup}; hence $U^\varepsilon$ is bounded $BV$ on
$[0,1]$. Note that the one-sided limits of $U^\varepsilon$ at $t=0,1$ are
identified with the endpoint Hessians $U^\varepsilon_0=\hess_x\phi^\varepsilon$ and
$U^\varepsilon_1$ of Lemma~\ref{lem:focalblowup}. This identification is part of the
normalisation fixed at the beginning of this subsection ($U_t=\dot{\mathcal
A}_t\mathcal A_t^{-1}$ with $\mathcal A$ the transport Jacobi field, $\mathcal
A_0=\Id$, $\mathcal A_1=DT^\varepsilon$, whose one-sided derivatives at the endpoints
are the endpoint Hessians). Since $U^\varepsilon_1\prec0$, Lemma~\ref{lem:backwardriccati} with $(t_0,t_1)=(0,1)$ gives, for
$\mu_0$-a.e.\ base point,
\[
U^\varepsilon_0\ \succeq\ \big(U^{\varepsilon\,-1}_1-\Id\big)^{-1}=:W^\varepsilon_0 ,
\]
and by Lemma~\ref{lem:focalblowup}, $W^\varepsilon_0\to-\Id$ as $\varepsilon\to0$.
Moreover, since $U^\varepsilon_1\prec0$ for every $\varepsilon>0$, we have
$(U^\varepsilon_1)^{-1}\prec0$, hence
$(U^\varepsilon_1)^{-1}-\Id\prec-\Id\prec0$, and therefore
\begin{equation}\label{eq:uniformfloor}
W^\varepsilon_0=\big((U^\varepsilon_1)^{-1}-\Id\big)^{-1}\ \in\ (-\Id,\,0),
\qquad\text{in particular }\ W^\varepsilon_0\ \succ\ -\Id ,
\end{equation}
for every $\varepsilon>0$ and $\mu_0$-a.e.\ $x$. Combining \eqref{eq:uniformfloor} with $U^\varepsilon_0
\succeq W^\varepsilon_0$ gives the uniform constant bound
\begin{equation}\label{eq:uniformupper}
\hess_x(-\phi^\varepsilon)=-U^\varepsilon_0\ \preceq\ -W^\varepsilon_0\ \prec\ \Id
\qquad(\mu_H\text{-a.e. }x\in B),
\end{equation}
where $U^\varepsilon_0=\hess_x\phi^\varepsilon$ and $\phi^\varepsilon$ is the McCann velocity
potential of the transport $\mu_0\to\mu_1^\varepsilon$. Note that  \eqref{eq:uniformfloor}--\eqref{eq:uniformupper} are seen blockwise: they hold for the tangential scalar and
the normal block of $-U^\varepsilon_0$, the blocks being taken with respect to the
ray direction $u^\varepsilon(x)$ at $x$. 

\medskip
\textbf{Step 2 (limit and upgrade).}
First, note that the function $-\phi^\varepsilon$ is semiconcave with a modulus uniform in
$\varepsilon$ on a fixed compact neighbourhood of $\overline B$ (the costs
$\tfrac12 d^2(\cdot,\exp_p(\varepsilon w))$ are $\lambda_0$-concave with a single
$\lambda_0=\lambda_0(\kappa,\diam)$ by the $\mathrm{CBB}(\kappa)$ comparison, and a
velocity potential inherits this modulus). By stability of optimal plans
\cite[Thm.~5.20]{Villani09} together with $\mathrm{CBB}$-uniqueness
\cite[Thm.~1.1]{bertrand2008existence}, $-\phi^\varepsilon\to\tfrac12 d^2(\cdot,p)$
locally uniformly as $\varepsilon\to0$.

Now, Step~1 gives, at $\mu_H$-a.e.\ $x\in B$, the two blockwise
bounds relative to the ray direction $u^\varepsilon(x)$,
\[
\tilde c:=\<\hess_x(-\phi^\varepsilon)u^\varepsilon,u^\varepsilon\>\le1,
\qquad
\tilde V:=\hess_x(-\phi^\varepsilon)\big|_{(u^\varepsilon)^\perp}\prec\Id ,
\]
with the mixed entries $\tilde m:=\hess_x(-\phi^\varepsilon)(u^\varepsilon,\cdot)|_{(u^\varepsilon)^\perp}$
a priori uncontrolled by the block hypothesis. They are, however, controlled by the
focal degeneration: Lemma~\ref{lem:focalmixed} gives
$|\tilde m|\le\eta^\varepsilon(x):=\|\mathcal A^\varepsilon_1(x)\|$ at $\mu_0$-a.e.\ $x$,
with $\eta^\varepsilon\to0$ $\mu_0$-a.e.\ as $\varepsilon\to0$. We upgrade the two
blockwise bounds plus this mixed control to an \emph{isotropic} a.e.\ bound. In the
splitting-adapted frame, $-U^\varepsilon_0=\big(\begin{smallmatrix}\tilde c&\tilde m^{\top}\\
\tilde m&\tilde V\end{smallmatrix}\big)$. If $\eta^\varepsilon(x)=0$ this is block-diagonal
and $\preceq\Id$; if $\eta:=\eta^\varepsilon(x)>0$, set $s:=\eta$ and consider
$M:=(1+s)\Id-(-U^\varepsilon_0)$, whose blocks are $M_{11}=1+s-\tilde c\ge s$,
$M_{22}=(1+s)\Id-\tilde V\succeq s\Id\succ0$ (as $\tilde V\prec\Id$), $M_{12}=-\tilde m$.
The Schur complement of $M_{22}$ is
\[
M_{11}-\tilde m^{\top}M_{22}^{-1}\tilde m\ \ge\ s-\frac{|\tilde m|^{2}}{s}\ \ge\
s-\frac{\eta^{2}}{s}\ =\ 0 ,
\]
so $M\succeq0$, i.e.
\begin{equation}\label{eq:isotropicsource}
\hess_x(-\phi^\varepsilon)\ \preceq\ \bigl(1+\eta^\varepsilon(x)\bigr)\,\Id
\qquad(\mu_H\text{-a.e. }x\in B).
\end{equation}
This is the right input for Lemma~\ref{lem:limitupgrade}. Set
$c_\varepsilon(x):=\min\{1+\eta^\varepsilon(x),\,\lambda_0\}$, where $\lambda_0$ is the
uniform semiconcavity modulus of the cost (so $\hess_x(-\phi^\varepsilon)\preceq\lambda_0\Id$
always): then $c_\varepsilon$ is measurable, $1\le c_\varepsilon\le\lambda_0$,
$\hess_x(-\phi^\varepsilon)\preceq c_\varepsilon(x)\Id$ a.e.\ (by \eqref{eq:isotropicsource}
where $1+\eta^\varepsilon\le\lambda_0$, and by semiconcavity otherwise), and
$c_\varepsilon\to1$ $\mu_0$-a.e.\ (since $\eta^\varepsilon\to0$), a continuous limit.
Applying Lemma~\ref{lem:limitupgrade} on $\Omega=B$ with $h_j:=-\phi^{\varepsilon_j}$,
$c_j:=c_{\varepsilon_j}$, $c\equiv1$, $M=\lambda_0$ yields
$\big(\tfrac12 d^2(\sigma(s),p)\big)''\le1$ as measures along every geodesic $\sigma$ in
$B$, i.e.\ \eqref{eq:condC}. 
\end{proof}

\subsection{Squared-distance bound implies nonnegative curvature}\label{ss:CtoAlex}

It remains to recall that the squared-distance comparison \eqref{eq:condC} characterises
nonnegative Alexandrov curvature; combined with
Proposition~\ref{prop:MDCimpliesC} this completes the proof of
Theorem~\ref{thm:displacementconveximpliesAlex} (assembled at the end of the subsection).

\begin{prop}\label{prop:CtoAlex}
Let $(X,d)$ be a finite-dimensional Alexandrov space with curvature bounded below by some
$\kappa>-\infty$, satisfying the squared-distance comparison \eqref{eq:condC}: for every
$p\in X$ and every geodesic, $\hess\big(\tfrac12 d^2(\cdot,p)\big)\preceq\Id$ in the support
sense. Then $(X,d)$ has nonnegative Alexandrov curvature.
\end{prop}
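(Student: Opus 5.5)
The plan is to derive the nonnegative-curvature comparison from \eqref{eq:condC} through the characterisation of $\mathrm{CBB}(0)$ by semiconcavity of squared distance functions, in the form used by Hu--Su--Wang \cite{HuSuWang}. The support-sense bound $(\tfrac12 d^2(\sigma(s),p))''\le 1$ along every unit-speed geodesic $\sigma$ says exactly that $s\mapsto \tfrac12 d^2(\sigma(s),p)-\tfrac12 s^2$ is concave, for every $p$ and every $\sigma$. That is the point of departure.

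\textbf{Step 1: from concavity to the comparison for a point and a side.} Fix a triangle $\triangle pqr$ and a point $s_0=\sigma(\lambda L)$ on the side $[qr]=\sigma([0,L])$, with $\lambda\in[0,1]$. Concavity of $g(s):=\tfrac12 d^2(\sigma(s),p)-\tfrac12 s^2$ gives $g(\lambda L)\ge(1-\lambda)g(0)+\lambda g(L)$. Rewriting this, one obtains
\[
d^2(p,s_0)\ \ge\ (1-\lambda)\,d^2(p,q)+\lambda\,d^2(p,r)-\lambda(1-\lambda)L^2 .
\]
The right-hand side is exactly $d(\tilde p,\tilde s)^2$ for the Euclidean comparison triangle, by Stewart's theorem (the parallelogram/median identity in $\mathbb E^2$). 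So this is precisely the $\kappa=0$ point-on-side comparison stated in the definition of $\curv\ge0$ in Section~\ref{ss:prelimalex}, and it holds for every triangle, not only small ones.

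\textbf{Step 2: checking the definition and concluding.} The definition asks for the comparison on a neighbourhood of each point, and $X$ is already geodesic. Step 1 gives the comparison globally, so the Globalization Theorem is not even needed, although it could be invoked to stay consistent with the local formulation. For this step I will be careful on two points. First, \eqref{eq:condC} is only assumed along geodesics, and we apply it along the side $[qr]$, which is a geodesic, so this is fine. Second, passing from the distributional inequality $g''\le0$ on $(0,L)$ to the three-point concavity inequality at the endpoints $0,L$ requires continuity of $g$ up to the endpoints, which holds because $d$ is Lipschitz.

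The key inputs are that a distributional bound $g''\le 0$ on an open interval, for a continuous $g$, implies concavity on the closed interval, and the Euclidean identity. I expect the main (mild) obstacle to be the bookkeeping around \eqref{eq:condC}. It must hold for every $p$, including singular $p$ and $p$ lying on $\sigma$ itself, whereas Proposition~\ref{prop:MDCimpliesC} was first proved for regular $p$. If only regular $p$ are available, I would approximate an arbitrary $p$ by regular $p_k\to p$: regular points are dense, having full measure. The functions $\tfrac12 d^2(\cdot,p_k)-\tfrac12 s^2$ converge uniformly along $\sigma$, and uniform limits of concave functions are concave, so the inequality of Step 1 passes to the limit. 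The case where $p$ lies on $\sigma$ is harmless, since $d^2(\cdot,p)$ is then explicitly $(s-s_p)^2$ along $\sigma$, with second derivative $2$, consistent with the bound.
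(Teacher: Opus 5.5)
Your proof is correct, and it takes a genuinely different and more direct route than the paper.

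\textbf{The paper's route.} The paper argues infinitesimally at the vertex. It reads \eqref{eq:condC} at $q=\sigma(0)$ as a one-sided expansion $d^2(\sigma(s),p)\le a^2+As+s^2+o(s^2)$. It identifies $A=-2a\cos\angle pqr$ through the first variation formula. Comparing with the Euclidean law of cosines gives the hinge condition (C) of Hu--Su--Wang with $k=0$. It then checks their supplementary-angle condition (4-1) and invokes their characterisation theorem \cite{HuSuWang}. That theorem is where the passage from infinitesimal to finite comparison actually takes place.

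\textbf{Your route.} You do that integration yourself. Concavity of $s\mapsto\tfrac12 d^2(\sigma(s),p)-\tfrac12 s^2$ on the closed side, combined with Stewart's identity, is literally the point-on-side comparison defining $\curv\ge0$, and it covers all triangles at once.

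\textbf{What each approach buys.} Your argument is self-contained: it needs no external characterisation theorem, no angles, no first variation, no globalisation, and no separate treatment of $p\neq q$ or degenerate triangles. It is also robust, because the concavity formulation passes to uniform limits in $p$, so \eqref{eq:condC} for a dense set of $p$ suffices. This matters here. The proof of Proposition~\ref{prop:MDCimpliesC} only works with a regular point $p$, since Lemma~\ref{lem:focalblowup} requires it. By contrast, the pointwise condition (C), with its $o(s^2)$ error at each vertex, does not obviously survive such a limit. The paper's route, for its part, uses the hypothesis only as a one-sided expansion at the starting point of each geodesic, the reading closest to the literal support sense, and delegates all the global bookkeeping to \cite{HuSuWang}.

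\textbf{One detail to state explicitly.} Under the distributional reading, the one used in the preliminaries and in the conclusion of Proposition~\ref{prop:MDCimpliesC}, concavity of the continuous function above is immediate. Under the pointwise barrier reading used in the paper's proof, it follows from the standard maximum-principle argument. If only one-sided barriers are available, combine that argument with the semiconcavity of $d_p^2$ supplied by $\curv\ge\kappa$, which forces the left and right derivatives to agree at an interior touching point.
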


\begin{proof}
By hypothesis $(X,d)$ is a finite-dimensional Alexandrov space with $\curv\ge\kappa$; in
particular it is locally compact, locally geodesic and intrinsic, angles between geodesics
exist, the first variation formula holds, and for $p\ne q$ and a geodesic $[qr]$ the angle
$\angle pqr=\lim_{t\to q}\widetilde\angle_0\,pqt$ exists.

Fix $x\in X$ and a neighbourhood $U_x$ in which the $\curv\ge\kappa$ comparison holds. Fix
$p\in U_x$ and a minimal geodesic $[qr]\subset U_x$ with $p\ne q$. Parametrize $[qr]$ by
arclength $s\mapsto\sigma(s)$, $\sigma(0)=q$, with unit initial direction
$v=\,\uparrow_q^{r}\in T_qX$, $|v|=1$, and write $a:=|pq|>0$.

The hypothesis \eqref{eq:condC} at $q$, applied to the geodesic $\sigma$ and the point $p$,
reads $\hess_q\big(\tfrac12 d^2(\cdot,p)\big)(v,v)\le|v|^2=1$ in the support sense; that is,
there is $A\in\R$ with
\[
d^2(\sigma(s),p)\le a^2+As+s^2+o(s^2).
\]

By the first variation formula in Alexandrov spaces (see \textit{e.g.} \cite{AlexanderKapovitchPetrunin}),
\[
\displaystyle\frac{d}{ds}\Big|_{0^+}d(\sigma(s),p)=-\cos\angle pqr=:-\cos\alpha,
\]
so
$$A=\frac{d}{ds}\big|_{0^+}d^2(\sigma(s),p)=-2a\cos\alpha$$ with
$\alpha=\angle pqr=\lim_{t\to q}\widetilde\angle_0\,pqt$. Therefore
\[
d^2(\sigma(s),p)\ \le\ a^2-2a\cos\alpha\,s+s^2+o(s^2).
\]
Let $\triangle\bar p\bar q\bar s\subset\mathbb S^2_0=\R^2$ be the comparison triangle with
$|\bar p\bar q|=a$, $|\bar q\bar s|=s$ and $\angle\bar p\bar q\bar s=\alpha$. By the
Euclidean law of cosines $|\bar p\bar s|^2=a^2+s^2-2as\cos\alpha$, whence
\[
d^2(\sigma(s),p)\le|\bar p\bar s|^2+o(s^2),
\qquad\text{i.e.}\qquad
|p\,\sigma(s)|\ \le\ |\bar p\bar s|+o\bigl(|q\,\sigma(s)|^2\bigr),
\]
using $|\bar p\bar s|\to a>0$. As $\alpha=\angle pqr$, this is exactly condition (C)
(equation~(1.8)) of \cite{HuSuWang} with $k=0$, for the hinge $(p,[qr])$ at $q$.

Since $p$ and $[qr]$ were arbitrary in $U_x$, condition (C) holds throughout $U_x$ with
$k=0$. Moreover, since $X$ has $\curv\ge\kappa$, angles are well defined and for $q$
interior to a geodesic $[rr']$ one has $\angle pqr+\angle pqr'=\pi$; thus condition (4-1) of
\cite{HuSuWang} holds in $U_x$. By Remark~1.2 of \cite{HuSuWang} (equivalently Theorem~B
through condition (B2)), a locally geodesic intrinsic metric space satisfying (4-1) and (C)
with $k=0$ has curvature $\ge0$ in the Alexandrov sense on $U_x$. As $x\in X$ was arbitrary,
$(X,d)$ has nonnegative Alexandrov curvature.
\end{proof}

\begin{proof}[Proof of Theorem~\ref{thm:displacementconveximpliesAlex}]
By Proposition~\ref{prop:MDCimpliesC}, the matrix displacement
convexity of the block-diagonal entropy tensor, for all $\mu_0,\mu_1\in\PR$, in some admissible
parallel trivialisation along $\mu_H$-a.e.\ transport ray, implies the squared-distance comparison \eqref{eq:condC}.
By Proposition~\ref{prop:CtoAlex}, \eqref{eq:condC} implies that $(X,d)$ has nonnegative Alexandrov curvature, i.e.\ $\kappa$ may be taken to be $0$.
\end{proof}

\appendix

\section{Transport rays and null sets}\label{app:rays}

This appendix collects the two measure-theoretic facts about transport rays used several times in this paper: absolute continuity of the displacement interpolation
at interior times, and the resulting Fubini statement for time-dependent null sets. Both
are essentially contained in \cite[\S\S1,3]{petrunin2010alexandrov}; but for completeness, we spell the proofs out.

\begin{lem}[absolute continuity at intermediate times]\label{lem:intermediateac}
Let $(X,d)$ be a finite-dimensional $\mathrm{CBB}(\kappa)$ space, $\kappa>-\infty$, let
$\mu_0,\mu_1$ be compactly supported probability measures with $\mu_0,\mu_1\ll\mu_H$,
and let $(\mu_t)_{t\in[0,1]}$ be the displacement interpolation between them. Then
$\mu_t\ll\mu_H$ for every $t\in[0,1]$.
\end{lem}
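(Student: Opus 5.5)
The plan is to treat the interior times $0<t<1$ and the endpoints separately. At $t=0,1$ nothing needs proving, since $\mu_0,\mu_1\ll\mu_H$ by hypothesis. For interior $t$, I will prove the stronger statement that the interpolation map at time $t$ has a Lipschitz inverse on the support of the optimal plan. This is the Alexandrov analogue of McCann's interpolation argument, and it is essentially \cite[\S1]{petrunin2010alexandrov}.

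\emph{Step 1: the inverse of the time-$t$ map is Lipschitz.} By \cite[Thm.~1.1]{bertrand2008existence}, the optimal plan is induced by a map $T$, and $\mu_t=(T_t)_\#\mu_0$ with $T_t(x)=\gamma_x(t)$. I will write $T_t(x)=\exp_x(-t\nabla\theta(x))$ as a point of the gradient flow of the semiconcave potential. Petrunin's estimate for gradient curves of semiconcave functions (the $\tfrac1t$-concavity of $\theta_t$) then says the following: the backward map from $\gamma_x(t)$ to $x$, defined on $T_t(\operatorname{supp}\mu_0)$, is Lipschitz with constant $L(t,\kappa,\diam)<\infty$ for every $t<1$. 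To get this, I will run the gradient flow of $-\theta_s$ backward and use that gradient flows of $\lambda$-concave functions satisfy $d(\Phi_\tau x,\Phi_\tau y)\le e^{\lambda\tau}d(x,y)$. The bound is uniform on the compact transport set, since the semiconcavity constant of $\tfrac12 d^2$ is. As a cross-check, McCann's alternative argument gives the same inverse-Lipschitz bound with the same dependence. For a pair of transport geodesics $\gamma_x,\gamma_y$, it combines the $c$-cyclical monotonicity inequality $d(x,T x)^2+d(y,Ty)^2\le d(x,Ty)^2+d(y,Tx)^2$ with the CBB($\kappa$) quadrilateral comparison at time $t$, and obtains $d(x,y)\le C_t\,d(\gamma_x(t),\gamma_y(t))$. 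I expect this step to be the main obstacle. The rigorous comparison in CBB($\kappa$), with $\kappa<0$, needs the distortion coefficients, and for $t$ close to $1$ the constant blows up; this is harmless because $t$ is fixed.

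\emph{Step 2: pushforward.} Let $S:=T_t^{-1}$, which is $L$-Lipschitz on $E:=T_t(\operatorname{supp}\mu_0)$. If $Z\subset X$ has $\mu_H(Z)=0$, then $\mu_t(Z)=\mu_0(S(Z\cap E))$. Lipschitz maps with constant $L$ satisfy $\H^\dd(S(A))\le L^\dd\H^\dd(A)$, so $\mu_H(S(Z\cap E))=0$, and hence $\mu_0(S(Z\cap E))=0$ because $\mu_0\ll\mu_H$. This gives $\mu_t\ll\mu_H$. Measurability is handled by replacing $Z$ by a Borel null superset, and $S(Z\cap E)$ is analytic, hence $\mu_0$-measurable.

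The result holds for every $t\in[0,1]$. Moreover, applying the same argument from the other endpoint, that is, to the reversed transport $\mu_1\to\mu_0$, also yields bounded-density estimates on compact interior time ranges, as used in Lemma~\ref{lem:limitupgrade}.
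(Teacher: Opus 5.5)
Your skeleton is the same as the paper's: a Lipschitz map from the time-$t$ position to an endpoint, the fact that Lipschitz maps preserve $\mu_H$-null sets, and absolute continuity of that endpoint measure. But Step~1 fails as you justify it.

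The bound $d(\Phi_\tau x,\Phi_\tau y)\le e^{\lambda\tau}d(x,y)$ for gradient flows of $\lambda$-concave functions is a \emph{forward-in-time upper} bound. The transport rays are forward gradient curves of the $\frac{\lambda_0}{s}$-concave Hopf--Lax family on $(0,1]$. So what Petrunin's estimate gives is that $\gamma(t)\mapsto\gamma(1)$ is $t^{-\lambda_0}$-Lipschitz. For $\gamma(t)\mapsto\gamma(0)$ you need the \emph{lower} bound $d(x,y)\le L\,d(\gamma_x(t),\gamma_y(t))$. Running a semiconcave gradient flow backward is, formally, the flow of a semiconvex function, and no such bound holds there: forward flows of concave functions can even merge points (take $-|x|$ on $\R$). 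Independently, the concavity modulus integrated over $[0,t]$ is $\int_0^t\lambda_0/s\,ds=\infty$.

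Your McCann cross-check does not close the hole either. Cyclical monotonicity plus the triangle inequality only gives $2t(1-t)(a-b)^2\le2\delta(a+b)+2\delta^2$, with $a=d(x,Tx)$, $b=d(y,Ty)$, $\delta=d(\gamma_x(t),\gamma_y(t))$. This controls the lengths, not $d(x,y)$. You do not say which $\mathrm{CBB}(\kappa)$ quadrilateral comparison would turn it into a linear bound.

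The fix is the reversal you mention at the end, but you must use it \emph{in} Step~1. The curves $s\mapsto\gamma(1-s)$ are the rays of the optimal transport $\mu_1\to\mu_0$. They are therefore forward gradient curves of \emph{that} problem's Hopf--Lax family, which is again $\frac{\lambda_0}{s}$-concave. Petrunin's estimate then makes $\gamma(t)\mapsto\gamma(0)$ Lipschitz with constant $(1-t)^{-\lambda_0}$. This also gives the injectivity of $T_t$ that your map $S$ presupposes.

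That reversed family is not a sign change of $\theta_s$. It only touches $-\theta_{1-s}$ along the transport set, while gradient curves in an Alexandrov space depend on the function on a whole neighbourhood.

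With this repair your Step~2 is correct, and your proof becomes the mirror image of the paper's. The paper avoids the inversion altogether: it uses the forward map $\Psi_{t,1}$ with $\mu_1=(\Psi_{t,1})_\#\mu_t$, so that $\mu_t(N)\le\mu_1(\Psi_{t,1}(N))=0$. That route needs neither injectivity of $T_t$ nor a Lipschitz inverse, and it uses $\mu_1\ll\mu_H$ in place of $\mu_0\ll\mu_H$.
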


\begin{proof}
Fix $t\in(0,1)$. Let $\Pi$ be the dynamical optimal
coupling, supported on the space $\Gamma$ of minimizing geodesics
$\gamma:[0,1]\to X$ with $\mu_s=(e_s)_\#\Pi$, $e_s(\gamma)=\gamma(s)$, and let
$\psi_s:=H_s\psi$ be the Hamilton--Jacobi shifts of the optimal price function, as in
\cite[\S3]{petrunin2010alexandrov}. Since $\tfrac12d^2(\cdot,y)$ is
$\lambda_0$-concave on the (compact) transport set, with
$\lambda_0=\lambda_0(\kappa,\mathrm{diam})$, the Hopf--Lax formula exhibits $\psi_s$ as
an infimum of $\tfrac{\lambda_0}s$-concave functions, so $(\psi_s)_{s\in(0,1]}$ is a
$\tfrac{\lambda_0}s$-concave family in the sense of
\cite[\S1]{petrunin2010alexandrov}. By the first-variation identity
$\<\gamma^+(s),v\>=d_{\gamma(s)}\psi_s(v)$ of \cite[\S3]{petrunin2010alexandrov}, every $\gamma\in\Gamma$ is a $\psi_s$-gradient curve
on $(0,1]$.
Consequently the gradient-flow maps $\Psi_{t,1}$ of the family
$(\psi_s)$ satisfy $\gamma(1)=\Psi_{t,1}(\gamma(t))$ for all $\gamma\in\Gamma$, i.e.
\[
\mu_1=(\Psi_{t,1})_\#\,\mu_t ,
\]
and by \cite[Prop.-def.~1.2]{petrunin2010alexandrov} the map $\Psi_{t,1}$ is
$L$-Lipschitz on the transport set with
$L=\exp\bigl(\int_t^1\tfrac{\lambda_0}s\,ds\bigr)=t^{-\lambda_0}$.

Now let $N$ be a Borel set with $\mu_H(N)=0$; we must show $\mu_t(N)=0$. Restricting
to the compact transport set, the image $\Psi_{t,1}(N)$ is contained in a Borel set of
$\mu_H$-measure $0$ (since $\Psi_{t,1}$ is Lipschitz). Then
\[
\mu_t(N)\ \le\ \mu_t\bigl(\Psi_{t,1}^{-1}(\Psi_{t,1}(N))\bigr)
=\mu_1\bigl(\Psi_{t,1}(N)\bigr)=0
\]
since $\mu_1\ll\mu_H$. Hence $\mu_t\ll\mu_H$. 
\end{proof}

\begin{lem}[time-dependent null sets are a.e.\ avoided]\label{lem:nullrays}
In the setting of Lemma~\ref{lem:intermediateac}, parametrise the transport rays
$\gamma_x$ over the starting points $x$ ($\mu_0$-a.e.\ defined) by a parameter interval
$(0,a)$, affinely reparametrising McCann time. Let $(N_t)_{t\in(0,a)}$ be a family of
Borel sets with $\mu_H(N_t)=0$ for a.e.\ $t$, such that
$\{(t,y):y\in N_t\}$ is product-measurable. Then for $\mu_0$-a.e.\ $x$,
\[
\mathrm{Leb}\bigl(\{t\in(0,a):\gamma_x(t)\in N_t\}\bigr)=0 .
\]
In particular, for a single $\mu_H$-null Borel set $N$ (constant family), $\mu_0$-a.e.\
ray meets $N$ in a set of times of measure zero; equivalently
$\H^1(\gamma\cap N)=0$, the rays having constant speed.
\end{lem}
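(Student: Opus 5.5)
The plan is to reduce the statement to Fubini's theorem on the product space (starting points) $\times$ (times), using Lemma~\ref{lem:intermediateac} to control the time marginals of the displacement interpolation. Set
\[
\mathcal N:=\{(x,t):\gamma_x(t)\in N_t\}\subset X\times(0,a).
\]
The first step is to check that $\mathcal N$ is measurable for $\mu_0\otimes\mathrm{Leb}$. The map $(x,t)\mapsto(t,\gamma_x(t))$ is measurable: the transport ray is $\gamma_x(t)=e_t(G(x))$, where $G$ is the ($\mu_0$-a.e.\ defined, measurable) map sending a starting point to its geodesic in the support of the dynamical optimal coupling $\Pi$ (unique by \cite[Thm.~1.1]{bertrand2008existence}). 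The evaluation $(\gamma,t)\mapsto\gamma(t)$ is continuous. Hence $\mathcal N$ is the preimage of the product-measurable set $\{(t,y):y\in N_t\}$ under a measurable map.

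The second step is the Fubini computation. By Tonelli,
\[
\int_X\mathrm{Leb}\{t:\gamma_x(t)\in N_t\}\,d\mu_0(x)=\int_0^a\mu_0\{x:\gamma_x(t)\in N_t\}\,dt=\int_0^a\mu_t(N_t)\,dt,
\]
where $\mu_t=(e_t\circ G)_\#\mu_0$ is the interpolant at the time corresponding to $t$ under the affine reparametrisation. By Lemma~\ref{lem:intermediateac}, $\mu_t\ll\mu_H$ for every $t$. For a.e.\ $t$ we have $\mu_H(N_t)=0$, hence $\mu_t(N_t)=0$. The right-hand side therefore vanishes, so the nonnegative integrand on the left vanishes $\mu_0$-a.e., which is the claim. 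The case of a constant family $N$ is immediate, since $\{(t,y):y\in N\}=(0,a)\times N$ is product-measurable. For the reformulation $\H^1(\gamma\cap N)=0$, I will use that a constant-speed minimizing geodesic is injective and bi-Lipschitz onto its image, so that $\H^1$ on the image corresponds to a constant multiple of Lebesgue measure in $t$.

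The main obstacle is measurability in the first step. Two points need care. First, $\mu_H(N_t)=0$ is assumed only for a.e.\ $t$, and Lemma~\ref{lem:intermediateac} is only needed for those $t$, so nothing more is required there. Second, the selection $x\mapsto\gamma_x$ must be measurable, and the rays are defined only $\mu_0$-a.e. I would handle this by working on the full-measure Borel set where the optimal map is defined and the geodesic is unique; the CBB non-branching property gives uniqueness of the interior geodesic. On that set, $x\mapsto\gamma_x$ is Borel as the composition of the measurable optimal map with the continuous-on-its-domain geodesic selection, via the Kuratowski–Ryll-Nardzewski measurable selection theorem if necessary. If one insists on $\mu_H$-a.e.\ starting points rather than $\mu_0$-a.e., I would apply the argument to the normalized restrictions of $\mu_H$ to balls and then take countable unions.
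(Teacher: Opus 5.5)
Your proposal is correct and follows the paper's proof. You establish joint measurability of $(x,t)\mapsto\gamma_x(t)$, swap the integrals by Tonelli to obtain $\int_0^a\mu_{s(t)}(N_t)\,dt$, and make this vanish using Lemma~\ref{lem:intermediateac} together with $\mu_H(N_t)=0$ for a.e.\ $t$. The only difference is cosmetic: you get joint measurability by factoring through the measurable geodesic selection into path space followed by continuous evaluation, whereas the paper invokes Carath\'eodory (continuity in $t$, measurability in $x$).
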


\begin{proof}
The function $(t,x)\mapsto\ind_{N_t}(\gamma_x(t))$ is jointly measurable: $(t,x)\mapsto
\gamma_x(t)$ is continuous in $t$ for fixed $x$ and measurable in $x$ for fixed $t$
(measurability of the ray map; the optimal map and its interpolants are
$\mu_0$-measurable), hence jointly measurable (Carath\'eodory), and the indicator of
the product-measurable set $\{(t,y):y\in N_t\}$ composes measurably. Tonelli then
gives
\begin{align*}
\int\mathrm{Leb}\bigl(\{t:\gamma_x(t)\in N_t\}\bigr)\,d\mu_0(x)
&=\int_0^a\mu_0\bigl(\{x:\gamma_x(t)\in N_t\}\bigr)\,dt\\
&=\int_0^a\mu_{s(t)}\bigl(N_t\bigr)\,dt\\
&=0 ,
\end{align*}
where $s(t)\in[0,1]$ is the affine reparametrisation and we used Lemma~\ref{lem:intermediateac}: $(x\mapsto\gamma_x(t))_\#\mu_0=\mu_{s(t)}\ll\mu_H$ together with $\mu_H(N_t)=0$ for a.e.\ $t$.
\end{proof}

\bibliographystyle{amsplain}
\bibliography{ref}

\providecommand{\bysame}{\leavevmode\hbox to3em{\hrulefill}\thinspace}
\providecommand{\MR}{\relax\ifhmode\unskip\space\fi MR }
\providecommand{\MRhref}[2]{%
  \href{http://www.ams.org/mathscinet-getitem?mr=#1}{#2}
}
\providecommand{\href}[2]{#2}
\begin{thebibliography}{10}

\bibitem{aishwarya2025sectional}
Gautam Aishwarya, Liran Rotem, and Yair Shenfeld, \emph{Sectional curvature and
  matrix displacement convexity}, arXiv preprint arXiv:2509.23399 (2025).

\bibitem{AlexanderKapovitchPetrunin}
Stephanie Alexander, Vitali Kapovitch, and Anton Petrunin, \emph{{Alexandrov
  Geometry: Foundations}}, Graduate Studies in Mathematics, vol. 236, American
  Mathematical Society, 2024.

\bibitem{ambrosio2018dc}
Luigi Ambrosio and J{\'e}r{\^o}me Bertrand, \emph{Dc calculus}, Mathematische
  Zeitschrift \textbf{288} (2018), no.~3, 1037--1080.

\bibitem{bertrand2008existence}
J{\'e}r{\^o}me Bertrand, \emph{{Existence and uniqueness of optimal maps on
  Alexandrov spaces}}, Advances in Mathematics \textbf{219} (2008), no.~3,
  838--851.

\bibitem{BuragoBuragoIvanov01}
Dmitri Burago, Yuri Burago, and Sergei Ivanov, \emph{{A Course in Metric
  Geometry}}, Graduate Studies in Mathematics, vol.~33, American Mathematical
  Society, 2001.

\bibitem{BuragoGromovPerelman92}
Yu~Burago, Mikhail Gromov, and Gregory Perel'man, \emph{{A}. {D}. {A}lexandrov
  spaces with curvature bounded below}, Russian mathematical surveys
  \textbf{47} (1992), no.~2, 1.

\bibitem{Cordero-ErasquinMcCannSchmuckenschlager01}
Dario Cordero-Erausquin, Robert~J. McCann, and Michael Schmuckenschl\"ager,
  \emph{A {R}iemannian interpolation inequality \`a{} la {B}orell, {B}rascamp
  and {L}ieb}, Invent. Math. \textbf{146} (2001), no.~2, 219--257. \MR{1865396}

\bibitem{HuSuWang}
Shengqi Hu, Xiaole Su, and Yusheng Wang, \emph{{New definitions of Alexandrov
  space and applications}}, arXiv preprint arXiv:2102.02112 (2023).

\bibitem{LottVillani09}
John Lott and C{\'e}dric Villani, \emph{Ricci curvature for metric-measure
  spaces via optimal transport}, Annals of Mathematics (2009), 903--991.

\bibitem{otsu30differential}
Yukio Otsu, \emph{{Differential Geometric Aspects of Alexandrov Spaces}},
  Mathematical Sciences Research Institute Publications, p.~135–148,
  Cambridge University Press, 1997.

\bibitem{OtsuShioya94}
Yukio Otsu and Takashi Shioya, \emph{{The Riemannian structure of Alexandrov
  spaces}}, Journal of Differential Geometry \textbf{39} (1994), no.~3,
  629--658.

\bibitem{Perelman_DC}
Grigori Perelman, \emph{{DC structure on Alexandrov space}}, Preprint, 1995.

\bibitem{PerelmanPetruninQG}
Grigori Perelman and Anton Petrunin, \emph{{Quasigeodesics and gradient curves
  in Alexandrov spaces}}, Preprint (1994).

\bibitem{petrunin1998parallel}
Anton Petrunin, \emph{{Parallel transportation for Alexandrov space with
  curvature bounded below}}, Geometric \& Functional Analysis GAFA \textbf{8}
  (1998), no.~1, 123--148.

\bibitem{petrunin2010alexandrov}
\bysame, \emph{{Alexandrov meets Lott--Villani--Sturm}}, arXiv preprint
  arXiv:1003.5948 (2010).

\bibitem{Shenfeld24}
Yair Shenfeld, \emph{Matrix displacement convexity along density flows}, Arch.
  Ration. Mech. Anal. \textbf{248} (2024), no.~5, Paper No. 74, 41.
  \MR{4791533}

\bibitem{Sturm06(i)}
Karl-Theodor Sturm, \emph{On the geometry of metric measure spaces. i}, Acta
  Math \textbf{196} (2006), 65--131.

\bibitem{Villani09}
C\'edric Villani, \emph{Optimal transport}, Grundlehren der mathematischen
  Wissenschaften [Fundamental Principles of Mathematical Sciences], vol. 338,
  Springer-Verlag, Berlin, 2009, Old and new. \MR{2459454}

\bibitem{vonRenesseSturm05}
Max-K. von Renesse and Karl-Theodor Sturm, \emph{Transport inequalities,
  gradient estimates, entropy, and {R}icci curvature}, Comm. Pure Appl. Math.
  \textbf{58} (2005), no.~7, 923--940. \MR{2142879}

\end{thebibliography}
\end{document}